\documentclass[12pt,a4paper,reqno]{amsart}
\usepackage{amsfonts,amsthm,amsmath,amscd}
\usepackage{booktabs}
\usepackage[utf8]{inputenc}
\usepackage{t1enc}
\usepackage[mathscr]{eucal}
\usepackage{indentfirst}
\usepackage{tikz}
\usepackage{algorithm}
\usepackage{algpseudocode}
\usetikzlibrary{shapes.geometric, positioning, calc}
\usetikzlibrary{automata,arrows,positioning,calc}
\usetikzlibrary{positioning}
\usepackage{caption}
\usepackage{subcaption}
\usepackage[font=small,labelfont=bf]{caption}
\usepackage{graphicx,graphics,pict2e}
\usepackage{tkz-berge}
\usetikzlibrary{arrows.meta} 
\usetikzlibrary{decorations.pathmorphing,decorations.markings} 
\usepackage{float}
\usepackage{epic}
\usepackage{lipsum}
\usepackage{stmaryrd}
\usepackage{authblk}
\usepackage[symbol]{footmisc}
\usepackage[normalem]{ulem}
\numberwithin{equation}{section}
\usepackage[margin=2.9cm]{geometry}
\usepackage{epstopdf}
\RequirePackage{doi}
\usepackage{hyperref}
\allowdisplaybreaks
\usepackage{cite}
\usepackage{multirow}
\usepackage{multicol}
\usepackage{rotating}
\usepackage{verbatim,amssymb,amsfonts}
\usepackage{mathrsfs}
\usepackage{mathtools}

\usepackage{tikz-cd}
\usepackage{indentfirst}
\usepackage{slashed}
\usepackage{thmtools}
\usepackage{setspace}
\usepackage{enumerate}
\usepackage{accents}
\usepackage{pdflscape}

\usetikzlibrary{arrows.meta, bending}

\usepackage{appendix}
\theoremstyle{plain}
\newtheorem{theorem}{Theorem}[section]
\newtheorem{lemma}[theorem]{Lemma}
\newtheorem{corollary}[theorem]{Corollary}
\newtheorem{proposition}[theorem]{Proposition}

\newtheorem{notation}{Notation}

 \theoremstyle{definition}
\newtheorem{definition}[theorem]{Definition}
\newtheorem{remark}[theorem]{Remark}

\DeclarePairedDelimiterX{\inp}[2]{\langle}{\rangle}{#1, #2}

\renewcommand{\>}{\rangle}

\newcommand{\cA}{{\mathcal A}}

\newcommand{\cE}{{\mathcal E}}

\newcommand{\cV}{{\mathcal V}}

\newcommand{\ba}{\begin{eqnarray}}
\newcommand{\na}{\end{eqnarray}}
\newcommand{\ban}{\begin{eqnarray*}}
\newcommand{\nan}{\end{eqnarray*}}

\newcommand{\R}{{\mathbb R}}
\newcommand{\Z}{{\mathbb Z}}

\renewcommand{\a}{\alpha}

\renewcommand{\thefootnote}{\fnsymbol{footnote}}

\makeatletter
\g@addto@macro{\endabstract}{\@setabstract}
\newcommand{\authorfootnotes}{\renewcommand\thefootnote{\@fnsymbol\c@footnote}}%
\makeatother

\makeatletter
\@namedef{subjclassname@2020}{%
  \textup{2020} Mathematics Subject Classification}
\makeatother

\title[]{Lin--Lu--Yau Ricci Curvature of Digraphs \\ via Optimal Transport Couplings}

\subjclass[2020]{05C20, 05C12, 05C25, 49Q22}

\keywords{Ricci curvature, Direct Cayley graphs, Optimal coupling, curvature algorithm}

\begin{document}

\begin{center}
    \vspace{-1cm}
	\maketitle
	
	\normalsize
    \authorfootnotes
    Kevin Fung, Johnny Lim\footnote[1]{Corresponding author.}
	\par \bigskip

        \small{School of Mathematical Sciences, Universiti Sains Malaysia, Penang, Malaysia}\par \bigskip
\end{center}

\address{School of Mathematical Sciences, Universiti Sains Malaysia, Penang, Malaysia
}
\email{fungkevin@student.usm.my}
\email{johnny.lim@usm.my}

\vspace{-0.5cm}
\begin{abstract}
In this paper, we study the Lin--Lu--Yau Ricci curvature of strongly connected locally finite digraphs through an explicit optimal-coupling construction. For an arc of a digraph, we derive a computable curvature formula by constructing a coupling between the probability measures at its tail and head, and by proving its optimality using a suitable $1$-Lipschitz function. The formula is not only effective for direct computation, but also unifies several known results: in particular, it recovers the Lin--Lu--Yau Ricci curvature formula for Cayley graphs of Right-Angled Artin--Coxeter Hybrid groups as a special case and gives shorter proofs of curvature results arising from matching-type conditions. We then characterize arcs with zero Ricci curvature through perfect distance matching and perfect distance partitions. 
We further prove that, under suitable assumptions, such arc curvature in directed Cayley graphs increases when an inverse generator or a new generator is added to the generating set. As applications, we compute the  curvature of directed Cayley graphs of dihedral groups and generalized quaternion groups, including $\Gamma(D_n,\{a,b\})$, $\Gamma(Q_{4m},\{a,b\})$, $\Gamma(Q_{4m},\{a,a^{-1},b\})$ and $\Gamma(Q_{4m},\{a,b,b^{-1}\})$. Finally, we provide an algorithm for computing the Lin--Lu--Yau Ricci curvature of Cayley graphs of finitely generated groups with prescribed generating sets, together with complete curvature tables for several important families of finite groups.

\end{abstract}

\section{Introduction}
\label{sec1}

Discrete curvature has become an important tool for studying graphs through geometric ideas. It provides a bridge between classical curvature in Riemannian geometry and discrete structures such as finite graphs, Markov chains, and Cayley graphs. Several notions of curvature on graphs have been developed, including the Bakry–Émery curvature \cite{bakryemery1985diffusion}, the Ollivier Ricci curvature \cite{ollivier2009Riccimetric}, the Lin–Lu–Yau Ricci curvature \cite{linluyau2011Ricci}, the Lott–Sturm–Villani curvature \cite{sturm2006geometryI,sturm2006geometryII,lott2009Riccimetric}, and the Steinerberger curvature \cite{steinerberger2023curvature}. These notions are based on different viewpoints, such as curvature-dimension inequalities, optimal transport, metric-measure theory, equilibrium measures and distance matrices. Among them, Ollivier Ricci curvature and Lin–Lu–Yau Ricci curvature are especially useful for measuring how local neighborhoods of adjacent vertices are transported toward each other.

Let $G=(\cV,\cE)$ be an undirected graph equipped with a probability measure $\mu$. For two vertices $x,y\in \cV$, Ollivier \cite{ollivier2009Riccimetric} defined the coarse Ricci curvature  $\kappa(x,y)$ by the relation
\begin{equation}
\label{eq_ollivier_ricci}
    \frac{W(\mu_x, \mu_y)}{d(x,y)}\coloneqq 1-\kappa (x,y),
\end{equation}
where $\mu_x = \dfrac{\mu|_{B(x,1)}}{\mu(B(x,1))}$ is the restriction of $\mu$ to the $1$-ball of $x,$ 
and $W(\mu_x, \mu_y)$ is the $1$-Wasserstein distance from $\mu_x$ to $\mu_y$. Later, Lin, Lu and Yau \cite{linluyau2011Ricci} introduced the $\alpha$-Ricci curvature $k_\a (x,y)$ using Equation \eqref{eq_ollivier_ricci} by choosing $\mu_x$ to be $\alpha$-lazy random walks. 
The Lin-Lu-Yau Ricci curvature is then defined by the limiting formula 
\begin{equation}
     \kappa(x,y) = \lim_{\alpha \rightarrow 1 } \frac{\kappa_{\alpha} (x,y)}{1-\alpha}.
\end{equation}
This definition has been widely studied for undirected graphs because it is closely related to transport, local graph structure, and several comparison-type results.

The directed setting is more delicate. In an undirected graph, both directions between two adjacent vertices are present, while in a digraph the geometry of an arc depends on the orientation and on the out-neighborhood structure.
In 2019, Yamada \cite{yamada2019Riccidirected} extended the Lin-Lu-Yau Ricci curvature to digraphs. However, explicit computation remains difficult, since it requires identifying the exact $1$-Wasserstein distance between probability measures associated with two adjacent vertices.

The main technical contribution of this paper is an explicit formula for the Lin–Lu–Yau Ricci curvature of arcs in strongly connected digraphs, cf. Theorem \ref{theo_curvature_regular}. 
The proof is based on an explicit coupling $A$ between the probability measures associated with the initial and terminal vertices of an arc. To prove that this coupling is optimal, we construct a suitable $1$-Lipschitz function $f$ adapted to $A$, so that the upper and lower bounds for the $1$-Wasserstein distance coincide.
This turns out to be extremely useful as it gives a concrete method for computing Ricci curvature from the local distance structure around an arc.
More specifically, Theorem \ref{theo_curvature_regular} recovers:
\begin{enumerate}[(1).]
    \item Hehl's formula of the Lin-Lu-Yau Ricci curvature for undirected graphs (Theorem \ref{hehl2026regularpositivericci})
    \item the Lin-Lu-Yau Ricci curvature for Cayley graphs of Right Angled Artin-Coxeter Hybrids (RAACHs) group (Proposition \ref{lemma_cushing2025})
    \item non-positive Ricci curvature of strongly connected digraphs with disjoint out-neighbourhoods (Corollary \ref{corollary_nonpositivecurvature})
    \item the Lin-Lu-Yau Ricci curvature of undirected graphs satisfying Local Matching and Extended Matching Conditions (Corollary \ref{cor3.10}-\ref{cor3.12}).
\end{enumerate}

In fact, (4) is a special case under the newly introduced notions called \textit{perfect distance matching} and \textit{perfect distance partitions}. We also utilize the former notion to prove a sufficient condition for when the Ricci curvature of an arc of a directed Cayley graph increases, cf. Theorems \ref{theo_increasecurv_inverse} and \ref{theo_curvature_add_gen}.
Moreover, using Theorem \ref{theo_curvature_regular}, we establish a characterization of the arcs with vanishing Ricci curvature in Theorem \ref{theo_arc_zerocurvature}. In particular, Ricci curvature vanishes exactly when the out-neighborhoods of $x$ and $y$ can be paired in such a way that makes the optimal transport cost attain the flat-curvature value.




To demonstrate the practicality of Theorem \ref{theo_curvature_regular}, we apply it to directed Cayley graphs arising from several important families of finite groups. For the dihedral group $D_n$ with $n\geq 3$, we compute the Ricci curvature of $\Gamma(D_n,\{a,b\})$ in Proposition \ref{prop_ricci_dihedral}. We also consider generalized quaternion groups and obtain explicit Ricci curvature formulas for $\Gamma(Q_{4m},\{a,b\})$, $\Gamma(Q_{4m},\{a,a^{-1},b\})$ and $\Gamma(Q_{4m},\{a,b, b^{-1}\})$, as stated in Propositions \ref{prop_ricci_quartenion_{a,b}}, 
\ref{prop_ricci_quartenion_3_a^{-1}}, and 
\ref{prop_ricci_quartenion_3_b^{-1}}, respectively. Finally, we present an algorithm for computing the curvature of Cayley graphs of the $\mathbb Z_n$, the symmetric group $S_n$ and the alternating group $A_n$. Remarkably, our algorithm is able to reproduce the results done by I. Mizukai and A. Sako\cite{mizukaiRicci_Cayley2024} with greater efficiency. 

This paper is organized as follows. Sect. \ref{sect_preliminaries} recalls the preliminaries on directed graphs, directed Cayley graphs, couplings and Lin--Lu--Yau Ricci curvature. Sect. \ref{sect_curv} proves Theorem \ref{theo_curvature_regular} for strongly connected locally finite digraphs, introduces perfect distance matching and partitions, and gives applications of the theorem. In Subsect. \ref{subsect_newresults}, we study how the Ricci curvature of arcs in directed Cayley graphs changes when inverse generators or new generators are added. Sect. \ref{sect_computation} computes the curvature for Cayley graphs of dihedral and generalized quaternion groups. A complete curvature tables for Cayley graphs of $D_n$, $Q_{4m}$, $\mathbb Z_n$, $S_n$ and $A_n$ are included in the Appendix.

\section{Preliminaries}
\label{sect_preliminaries}
In this section, we list necessary and relevant preliminaries to be used throughout. 

\begin{definition}(\cite{yamada2019Riccidirected})
Let $D=(\cV,\cA)$ be a directed graph. 
    \begin{enumerate}
        \item For any two vertices $x,y \in \cV$, if there is a path from $x$ to $y$, then $y$ is said to be reachable from $x$. If this holds for all vertices $x,y\in \cV$, then $D$ is said to be \textit{strongly connected}. The \textit{distance} $d(x,y)$ is the length of shortest directed path from $x$ to $y$. If there is no such path, then we define $d(x,y)=\infty$.

        \item For any vertex $x\in \cV$, the \textit{out-degree} $d_x^{\text{out}}$ of $x$ is the number of arcs initiated from $x$. $D$ is \textit{locally finite} if every vertex has a finite out-degree.

        \item The set $N^{\text{out}}(x)=\{v \in \cV : (x,v) \in \cA \}$ is called the \textit{out-neighborhood} of $x$. In particular, $d_x^{\text{out}}=|N^{\text{out}}(x)|$.

        \item $D$ is \textit{$r$-regular} if all vertices have the same out-degree $r$.

        \item $D$ is \textit{simple} if it has no multiple arcs and loops.
    \end{enumerate}
\end{definition}

\vspace{-0.3cm}
\begin{notation}
Henceforth, for simplicity, the  term ``\textbf{\textit{digraph}}'' shall mean strongly connected locally finite simple directed graphs, unless stated otherwise.
\end{notation}

\vspace{-0.3cm}
\begin{definition}
(\cite{grossman1964groups})
\label{def_directed Cayley graphs}
Let $G$ be a group and let $S \subseteq G-\{e\}$ be a generating set of $G$. A \textit{directed Cayley graph} $\Gamma (G, S)$ is defined as a simple digraph with vertex set $G$ and arcs of the form $(g,gs)$ for every $g \in G$ and $s \in S$.
\end{definition}

\vspace{-0.3cm}
\begin{definition}
(\cite{grossman1964groups})
\label{def_undirected Cayley graphs}
Let $G$ be a group and let $S \subseteq G-\{e\}$ be a generating set of $G$ such that $S$ is symmetric (inverse-closed), i.e. $S=S^{-1}.$ The Cayley graph $\Gamma(G, S)$ is then regarded as an \textit{undirected} simple graph by identifying symmetric arcs between two vertices as one edge.
\end{definition}

\vspace{-0.3cm}
\begin{remark}
In general, the undirected (or directed) Cayley graphs $\Gamma(G,S)$ are regular of degree $|S|$. In Definition \ref{def_directed Cayley graphs} and \ref{def_undirected Cayley graphs}, if $S$ generates $G$, i.e. $G=\<S\>$, then $G$ is connected. In general, if $S$ does not generate $G$, then $G$ is disconnected. If the unit $e$ is allowed in $S$, then every $g \in G$ has a self-loop since $(g,g)=(g,ge)$. 
\end{remark}

\begin{definition} {(\cite{bjorner2006combinatorics}, \cite{Johnson1980groupspresentation})}
    \begin{enumerate}
        \item For $n\geq 3$, the dihedral groups $D_n$ are defined as 
        \begin{equation*}
            D_n=\langle \ a,b \mid a^n=b^2=e \text{ and } ba=a^{n-1}b \ \rangle.
        \end{equation*}

        \item For $m\geq 2$, the generalized quaternion groups $Q_{4m}$ are defined as 
        \begin{equation*}
            Q_{4m}= \left< a,b \mid a^{2m}=e, b^2=a^m \text{ and } b^{-1}ab=a^{-1} \right>.
        \end{equation*}

        \item For $n\geq 2$, the integer modulo $n,$ $\Z_n,$ is defined as 
        \begin{equation*}
            \Z_n = \langle\ 1 \mid 1(n)=e \ \rangle.
        \end{equation*}

        \item For $n\geq 2$, the symmetric group $S_n$ on $n$ objects is defined as
        \begin{equation*}
            S_n = \langle \ \sigma_1, \ldots, \sigma_{n-1} \mid \sigma_i^2=e, \sigma_i \sigma_j = \sigma_j \sigma_i, \text{ for }  |i-j|>1, (\sigma_i \sigma_{i+1})^2=e \ \rangle.
        \end{equation*}

        \item For $n\geq 3$, the alternating group $A_n$ is defined as
        \begin{equation*}
            A_n = \langle \ V_1, \ldots, V_{n-2} \mid V_i^3=e, (V_i V_{i+1})^2=e, \text{ for }  1\leq i<j\leq n-2 \ \rangle.
        \end{equation*}
    \end{enumerate}
\end{definition}


\begin{definition}{(\cite{yamada2019Riccidirected})}
\label{def_alphalazywalk}
   Let $D=(\cV,\cA)$ be a digraph 
   and let $\alpha \in [0,1].$ The \textit{$\alpha$-lazy random walk} probability measure  $\mu^{\alpha}_x : \cV \to [0,1]$ is defined as 
    \begin{align*}
    \mu^{\alpha}_x(v)=
        \begin{cases}
	\alpha, &\text{if } v=x,\\
	\dfrac{1-\alpha}{d^{\text{out}}_x}, &\text{if } (x,v) \in \cA, \\
    0, &\text{otherwise}.
    \end{cases}
    \end{align*}
\end{definition}

\begin{definition}{(\cite{yamada2019Riccidirected})}
    \label{def_Wasserstein}
    For two probability measures $\mu$ and $\nu$ on $\cV$, the $1$-Wasserstein distance between $\mu$ and $\nu$ is defined as
    \begin{equation}
          \label{eq_Wasserstein_distance}
          W(\mu, \nu) = \inf_{A} \sum_{x,y \in \cV} A(x,y) d(x,y),
    \end{equation}
    where $A:\cV \times \cV \rightarrow [0,1]$, called a \textit{coupling} between $\mu$ and $\nu,$ is a map satisfying
    \begin{equation}
    \label{eq_coupling_criteria}
    \begin{cases}
    \sum_{y\in \cV} A(x,y) \ = \ \mu(x),\\
	\sum_{x\in \cV} A(x,y) \ = \ \nu(y).\\
    \end{cases}
    \end{equation}
\end{definition}

\begin{remark}
\begin{enumerate}[(i).]
    \item Since the directed distance is not necessarily symmetric, the resulting $1$-Wasserstein distance may also be non-symmetric.
    \item A coupling $A$ that attains the $1$-Wasserstein distance is called an \textit{optimal} coupling. Since $\mu_x^\alpha$ and $\mu_y^\alpha$ have finite supports, the coupling set $\Pi(\mu_x^\alpha,\mu_y^\alpha)$ defined by \eqref{eq_coupling_criteria} is a nonempty compact polytope, and the transport cost $A\mapsto \sum_{u,v}A(u,v)d(u,v)$ is a continuous linear functional. Hence an optimal coupling exists by the Weierstrass Extreme Value Theorem. Moreover, by the linear programming result of Bazaraa--Jarvis--Sherali \cite[\S 3]{bazaraa2010linear}, one may choose an optimal coupling at an extreme point of the coupling polytope. However, an optimal coupling need not be unique.

    \item     For any coupling $B$, we have $\sum_{v\in \cV} B(v,w)=\mu_{y}^\alpha (w)$. For $w \in \cV -( N^{\text{out}} (y) \cup{\{y\}} )$, we have $\mu_{y}^\alpha (w)=0$ by definition. It follows that $\sum_{v\in \cV} B(v,w)=0$. As $B$ takes value in $[0,1]$, we have $B(v,w)=0$ for every $v \in \cV$. Therefore, the condition
    $\sum_{w\in \cV} B(v,w)=\mu_{x}^\alpha (v)$
reduces to 
\begin{align*}
    \sum_{w\in N^{\text{out}} (y) \cup{\{y\}} } B(v,w)=\mu_{x}^\alpha (v)
\end{align*}
for every $v\in \cV$. Similarly, for $v\in \cV -( N^{\text{out}} (x) \cup{\{x\}} )$, we have $B(v,w)=0$ for every $w\in V$. 
\end{enumerate}
\end{remark}

\begin{definition}{(\cite{yamada2019Riccidirected})}
    For any two distinct vertices $x,y\in \cV$, the $\alpha$-Ricci curvature of $x$ and $y$, $\kappa_{\alpha}(x,y)$ is defined as 
    \begin{equation*}
        \kappa_{\alpha} (x,y) = 1- \frac{W(\mu_x^{\alpha}, \mu_y^{\alpha})}{d(x,y)}.
    \end{equation*}
    In the case where $y$ is not reachable from $x$, we define $k_\alpha(x,y) := -\infty$.
    The Lin-Lu-Yau Ricci curvature of $x$ and $y$ is defined as 
    \begin{equation*}
        \kappa(x,y) = \lim_{\alpha \rightarrow 1 } \frac{\kappa_{\alpha} (x,y)}{1-\alpha}.
    \end{equation*}
\end{definition}

    By choosing a coupling, Definition \ref{def_Wasserstein} provides an upper bound of $W(\mu_x^{\alpha}, \mu_y^{\alpha})$. On the other hand, a lower bound of $W(\mu_x^{\alpha}, \mu_y^{\alpha})$ can be obtained below.

\begin{proposition}{(\cite{yamada2019Riccidirected})}
\label{prop_W_sup}
    For any two vertices $x$ and $y$, we have
    \begin{equation*}
        W(\mu_x^{\alpha}, \mu_y^{\alpha}) \geq \sup_{f} \left( \sum_{v\in V} f(v)\mu_x^{\alpha}(v) - \sum_{w\in V} f(w)\mu_x^{\alpha}(w)  \right), 
    \end{equation*}
    where $f:\cV \rightarrow \R$ is function such that $f(v)-f(w) \leq d(v,w)$.
\end{proposition}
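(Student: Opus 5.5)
The inequality is the easy (weak duality) half of Kantorovich--Rubinstein duality. I would prove it by comparing an arbitrary coupling with an arbitrary admissible function. In the displayed formula the second sum should be read as $\sum_{w} f(w)\mu_y^{\alpha}(w)$; this is evidently a typo, since with $\mu_x^\alpha$ in both sums the right-hand side is $0$ and the statement is trivial. I prove the intended version. Only finite supports occur: $\mu_x^\alpha$ is supported on $N^{\text{out}}(x)\cup\{x\}$ and $\mu_y^\alpha$ on $N^{\text{out}}(y)\cup\{y\}$, both finite by local finiteness. So every sum below is finite, and no convergence or measurability issues arise.

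Fix any coupling $A$ between $\mu_x^\alpha$ and $\mu_y^\alpha$, and any $f:\cV\to\R$ with $f(v)-f(w)\le d(v,w)$ for all $v,w$. By part (iii) of the preceding remark, $A(v,w)=0$ unless $v\in N^{\text{out}}(x)\cup\{x\}$ and $w\in N^{\text{out}}(y)\cup\{y\}$. Substituting the marginal conditions \eqref{eq_coupling_criteria}, I rewrite
\begin{equation*}
\sum_{v} f(v)\mu_x^{\alpha}(v)-\sum_{w} f(w)\mu_y^{\alpha}(w)
=\sum_{v,w} A(v,w)f(v)-\sum_{v,w}A(v,w)f(w)
=\sum_{v,w}A(v,w)\bigl(f(v)-f(w)\bigr).
\end{equation*}
Since $A\ge 0$ and $f(v)-f(w)\le d(v,w)$, this is at most $\sum_{v,w}A(v,w)d(v,w)$.

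If some pair with $A(v,w)>0$ has $d(v,w)=\infty$, this bound holds trivially. In a strongly connected digraph all distances are finite anyway. Because $f$ and $A$ were arbitrary, I take the infimum over $A$ on the right, which gives $W(\mu_x^\alpha,\mu_y^\alpha)$ by Definition~\ref{def_Wasserstein}, and then the supremum over admissible $f$ on the left. This yields the claim.

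There is no real obstacle. The only points needing care are the typo in the second measure and the bookkeeping that lets the marginal identities be substituted into finite double sums, which part (iii) of the remark supplies.
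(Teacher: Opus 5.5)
Your proof is correct. The paper gives no proof of this proposition; it is quoted from Yamada, so there is no in-paper argument to compare against. Your argument is the standard weak-duality half of Kantorovich--Rubinstein:
\begin{itemize}
\item use the marginal conditions to write $\sum_v f(v)\mu_x^{\alpha}(v)-\sum_w f(w)\mu_y^{\alpha}(w)=\sum_{v,w}A(v,w)\bigl(f(v)-f(w)\bigr)$, where all sums are finite by local finiteness;
\item bound each term by $A(v,w)\,d(v,w)$;
\item take the infimum over couplings and then the supremum over $f$.
\end{itemize}

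Your reading of the second measure as $\mu_y^{\alpha}$ is the intended one. It is what the subsequent remark for regular digraphs and the proof of Theorem~\ref{theo_curvature_regular} actually use. You also have the orientation of the constraint right: mass moves from $v$ to $w$, so $f(v)-f(w)\le d(v,w)$ is the correct condition for the non-symmetric directed distance.
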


\begin{remark}
    For a regular digraph with regularity $r$, the expression 
    \begin{equation*}
        \sum_{v\in V} f(v)\mu_x^{\alpha}(v) - \sum_{w\in V} f(w)\mu_x^{\alpha}(w), 
    \end{equation*}
    can be rewritten as 
    \begin{equation}
        [f(x)-f(y)]\alpha+\frac{1-\alpha}{r} \left[ \sum_{v\in N^{\text{out}}(x)} f(v) - \sum_{w\in N^{\text{out}}(y)} f(w)     \right].  
    \end{equation}
\end{remark}


\section{Optimal couplings for Lin-Lu-Yau Ricci curvature of digraphs} \label{sect_curv}
In this section, we give an explicit construction of an optimal coupling for the Lin–Lu–Yau Ricci curvature of digraphs. The main idea is to reduce the computation of the $1$-Wasserstein distance to a suitable bijection between the out-neighborhoods $N^{out}(x)$ and $N^{out}(y)$, under the assumption that the two vertices have the same out-degree. This bijection is chosen so that the total directed distance between paired vertices is minimized, and it leads to the curvature formula below.

\begin{theorem}
\label{theo_curvature_regular}
    Let $D=(\cV, \cA)$ be a  digraph. Let $(x,y)$ be an arc of $D$ such that $d^{\text{out}}_x=d^{\text{out}}_y=r$. Then, the Lin-Lu-Yau Ricci curvature $\kappa(x,y)$ is given by
    \begin{equation}
     \label{eq_lin-lu-yau_curvature}
    \kappa(x,y)=
    \begin{cases}
        1-\dfrac{1}{r} \left(  \displaystyle \sum_{v\in N^{\text{out}}(x)} d(v,w(v))   \right), &\text{ if $x\notin N^{\text{out}}(y)$} \\[20pt]
        
        1-\dfrac{1}{r} \left(  \displaystyle \sum_{v\in N^{\text{out}}(x)-\{y\} } d(v,w(v)) -1  \right), &\text{ if $x\in N^{\text{out}}(y).$}
    \end{cases}
    \end{equation}
    If $x\notin N^{\text{out}}(y)$, then for each $v\in N^{\text{out}}(x)$, there exists a unique $w(v)\in N^{\text{out}}(y)$ such that there is an one-to-one correspondence between $N^{\text{out}}(x)$ and $N^{\text{out}}(y)$ with $\sum_{v\in N^{\text{out}}(x)} d(v,w(v))$ being minimum among all such possible choices. If $x\in N^{\text{out}}(y)$, then $w(y)$ is always chosen to be $x$.
\end{theorem}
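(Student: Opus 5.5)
The plan is to compute $W(\mu_x^\alpha,\mu_y^\alpha)$ exactly for all $\alpha$ sufficiently close to $1$. The key observation is that $W$ is then an affine function of $\alpha$, so the limit $\kappa_\alpha/(1-\alpha)$ can be read off directly. Since $(x,y)\in\cA$ we have $d(x,y)=1$, hence $\kappa_\alpha(x,y)=1-W(\mu_x^\alpha,\mu_y^\alpha)$. Write $m=(1-\alpha)/r$, and fix a bijection $w:N^{\text{out}}(x)\to N^{\text{out}}(y)$ minimizing $\sum_v d(v,w(v))$ (with $w(y)=x$ in the second case).

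\emph{Upper bound (explicit coupling).} If $x\notin N^{\text{out}}(y)$, let $A$ send mass $\alpha$ from $x$ to $y$ and mass $m$ from each $v\in N^{\text{out}}(x)$ (including $v=y$) to $w(v)$. This satisfies \eqref{eq_coupling_criteria}, and its cost is $\alpha+m\sum_v d(v,w(v))$. Hence
\[
\kappa_\alpha\ \ge\ (1-\alpha)-m\sum_v d(v,w(v)).
\]
If $x\in N^{\text{out}}(y)$, let $A$ keep mass $m$ at $x$ (this supplies $\mu_y^\alpha(x)$), send $\alpha-m$ from $x$ to $y$, keep mass $m$ at $y$, and send $m$ from each $v\in N^{\text{out}}(x)-\{y\}$ to $w(v)\in N^{\text{out}}(y)-\{x\}$. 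Its cost is $(\alpha-m)+m\sum_{v\neq y}d(v,w(v))$. Dividing by $1-\alpha$ and letting $\alpha\to1$ gives exactly the right-hand sides of \eqref{eq_lin-lu-yau_curvature}, so each formula is a lower bound for $\kappa(x,y)$. This requires $\alpha\ge m$, which holds near $1$.

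\emph{Lower bound on $W$.} Here I would show that no coupling does better. My first attempt is a primal rerouting argument rather than guessing $f$. Take an optimal coupling $B$, which exists at an extreme point of the coupling polytope by the preceding remark. The source masses are: $\alpha$ at $x$ and $m$ at each $v\in N^{\text{out}}(x)$. The targets are: $\alpha$ at $y$ and $m$ at each $w\in N^{\text{out}}(y)$. Mass that $B$ sends from $x$ to some $w\neq y$ costs $d(x,w)\le 1+d(y,w)$. Mass that $B$ sends from some $v\neq x$ into $y$ can be swapped with part of the $x\to y$ flow. Since $\alpha\gg m$, there is always enough $x\to y$ flow to swap with. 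By the triangle inequality for the directed distance, each such swap does not increase the cost. After these modifications, $x$ sends exactly $\alpha$ to $y$ (respectively, in the second case, $m$ to $x$ and $\alpha-m$ to $y$). The residual plan, rescaled by $1/m$, is then a doubly stochastic matrix between $N^{\text{out}}(x)$ and $N^{\text{out}}(y)$. By Birkhoff--von Neumann its cost is at least that of an optimal permutation, which is $\sum_v d(v,w(v))$.

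\emph{Main obstacle.} The delicate step is the bookkeeping in the second case. There $x$ is simultaneously a source, with mass $\alpha$, and a target, with mass $m$, while $y$ is both a source, with mass $m$, and a target, with mass $\alpha$. One must check that, after rerouting, an optimal matching may be assumed to pair $y$ with $x$. For this I would use the inequality $d(y,w')+d(v,x)\ge d(v,w')-1$ (since $d(y,x)=1$), which is exactly what produces the ``$-1$'' correction. If the rerouting becomes messy, the fallback is LP duality, as in Proposition \ref{prop_W_sup}. In that approach I would take dual potentials of the assignment problem on $N^{\text{out}}(x)\times N^{\text{out}}(y)$, set $f(y)=f(x)-1$, and extend to all of $\cV$ by $f(z)=\max_t\big(f(t)-d(t,z)\big)$. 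This extension preserves the condition $f(v)-f(w)\le d(v,w)$, and one then verifies that the dual value matches the coupling cost above.
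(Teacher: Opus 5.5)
\textbf{There is a genuine gap in the lower bound.} Your couplings in both cases are the same as the paper's, so the inequality $\kappa(x,y)\ge$ (right-hand side) is fine. The gap is in the reverse inequality $W(\mu_x^\alpha,\mu_y^\alpha)\ge\dots$, which is the real content of the theorem.

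Your exchange replaces $\delta$ units of $x\to w$ (with $w\in N^{\text{out}}(y)$) and $\delta$ units of $v\to y$ by $x\to y$ and $v\to w$. The cost changes by $\delta\,(1+d(v,w)-d(x,w)-d(v,y))$. The triangle inequality only gives $d(v,w)\le d(v,y)+d(y,w)=d(v,y)+1$, so the change is at most $\delta\,(2-d(x,w))$. What you need is therefore $d(x,w)\ge 1+d(y,w)=2$; the inequality you quote, $d(x,w)\le 1+d(y,w)$, points the wrong way. Since $d(x,w)\le 2$ always holds for $w\in N^{\text{out}}(y)$, the swap is harmless exactly when $d(x,w)=2$.

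In case (a) this fails precisely when $w\in N^{\text{out}}(x)\cap N^{\text{out}}(y)$. Then $d(x,w)=1$ and the change is $\delta\,(d(v,w)-d(v,y))$. This equals $+\delta$ whenever, for example, $(v,y)\in\cA$ but $(v,w)\notin\cA$; it is also $+\delta$ for $v=y$. In case (b) the target $x$ (with $d(x,x)=0$) and the source $y$ add further configurations, and your single inequality $d(y,w')+d(v,x)\ge d(v,w')-1$ covers only one of them. So ``each such swap does not increase the cost'' is false in general. The normal form on which your Birkhoff--von Neumann step rests is established only in case (a) with $N^{\text{out}}(x)\cap N^{\text{out}}(y)=\emptyset$.

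\textbf{What is missing.} A common out-neighbour $w$ is also a source of mass $m$. If $x$ feeds the target $w$, the source $w$ must ship the same amount either to $y$ or to some $w'\neq w$ at cost $d(w,w')\ge 1$. The exchange that works is the three-term one, $\{x\to w,\ w\to w',\ v\to y\}\mapsto\{x\to y,\ w\to w,\ v\to w'\}$, whose cost change is $d(v,w')-d(w,w')-d(v,y)\le 0$. In general you need alternating chains of this kind, the cyclic optimality of the matching $w(\cdot)$ (chosen to fix common out-neighbours), and the analogous chains through $x$ and $y$ in case (b).

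Your LP fallback hides the same point inside ``one then verifies''. Once you set $f(x)=f(y)+1$, the condition $f(x)-f(v)\le 1$ says $f(v)\ge f(y)$ for all $v\in N^{\text{out}}(x)$. Assignment duality gives this for non-trivially matched $v$, since $f(y)\le f(w(v))+1\le f(v)$, but not at common out-neighbours. Also, the extension $\max_t(f(t)-d(t,z))$ only preserves a Lipschitz condition that already holds on the initial set.

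The paper works on this dual side: it builds $f$ inductively from $f(x)=1$, $f(y)=0$, $f(w(v))=f(v)-d(v,w(v))$ and then invokes Proposition \ref{prop_W_sup}. Even there, the check $f(u)-f(w(v))\le d(u,w(v))$ does not follow from the triangle inequality alone and must use optimality of the matching; the inductive definition by itself does not guarantee it.

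\textbf{A way to finish.} Obtain $f$ on $\{x,y\}\cup N^{\text{out}}(x)\cup N^{\text{out}}(y)$ as shortest-path potentials for the difference constraints
$f(a)-f(b)\le d(a,b)$, $f(x)-f(y)\ge 1$, and $f(v)-f(w(v))\ge d(v,w(v))$ for $v$ in the domain of $w$. That domain is all of $N^{\text{out}}(x)$ in case (a) and $N^{\text{out}}(x)-\{y\}$ in case (b). Absence of negative cycles follows from three facts: the triangle inequality, cyclic optimality of $w$, and $d(x,w')=2$ for $w'\in N^{\text{out}}(y)-(N^{\text{out}}(x)\cup\{x\})$. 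Your extension step and Proposition \ref{prop_W_sup} then give the matching lower bound.
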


\begin{proof}
We prove by cases.
\begin{enumerate}[(a)]

        \item If $x\notin N^{\text{out}} (y)$ (i.e., arc $(y,x)$ does not exist), then we define $A(x,y)=\alpha$. Suppose $N^{\text{out}} (x) = \{v_1, \ldots, v_r\}$, where $y=v_i$ for some $i,$ and $N^{\text{out}} (y) =\{w_1, \ldots, w_r\}$. For each $v_i$, we choose a unique vertex $w(v_i) \in N^{\text{out}} (y) $ such that for $i \neq j$, $w(v_i) \neq w(v_j)$ with $\sum_{i=1}^{r}d(v_i, w(v_i))$ being the minimum among all such choices. This is justified as both $N^\text{out}(x)$ and $N^\text{out}(y)$ are finite and of the same size. In other words, we have a bijection from $N^{\text{out}} (x)$ to $N^{\text{out}} (y)$. If a vertex $u$ is in both $N^{\text{out}} (x) $ and $N^{\text{out}} (y),$ i.e., $x$ and $y$ have a common out-neighbor, then we choose $w(u)$ to be $u$.

        Define $A(v_i, w(v_i)):=\frac{1-\alpha}{r}$ for all $i$. 
        For all other ordered pairs $(s,t) \in \cV \times \cV$, $A(s,t)$ is defined to be zero.
        Then, we have
\begin{align*}    
    \sum_{v,w\in \cV} A(v,w)d(v,w) &= A(x,y)d(x,y)+\sum_{i=1}^r \sum_{j=1}^r A(v_i,w_j)d(v_i,w_j) \\
    &= \alpha+\sum_{i=1}^r A(v_i,w(v_i))d(v_i,w(v_i)) \\
    &= \alpha+ \sum_{i=1}^r \left( \frac{1-\alpha}{r} \right) d(v_i, w(v_i)) \\
    &= \alpha+ \frac{1-\alpha}{r}  \sum_{v\in N^{\text{out}} (x) } d(v, w(v)).
\end{align*}

        \item If $x\in N^{\text{out}} (y)$ (i.e., arc $(y,x)$ exists), then we define 
        \[
        A(x,y)= \alpha-\frac{1-\alpha}{r}, \ A(x,x)=\frac{1-\alpha}{r} = A(y,y), \ A(y,x)=0.
        \] Note that for $y$, we choose $w(y)$ to be $x$. For value of coupling $A$ on $ \left( \cV-\{x,y\} \right) \times \left( \cV-\{x,y\} \right)$, refer to $(a)$. 
        Now, let $y=v_1$, then we have 
\begin{align*}    
    \sum_{v,w\in \cV} A(v,w)d(v,w) &= A(x,y)d(x,y)+\sum_{i=1}^r \sum_{j=1}^r A(v_i,w_j)d(v_i,w_j) \\
    &= \alpha-\frac{1-\alpha}{r}+A(y,x)d(y,x)+\sum_{i=2}^r A(v_i,w(v_i))d(v_i,w(v_i)) \\
    &= \alpha-\frac{1-\alpha}{r}+ \sum_{i=2}^r \left( \frac{1-\alpha}{r} \right) d(v_i, w(v_i)) \\
    &= \alpha+ \frac{1-\alpha}{r} \left( \sum_{v\in N^{\text{out}} (x)-\{y\} } d(v, w(v)) -1 \right).
\end{align*}
\end{enumerate}

Now, we define a function $f: \cV \rightarrow \R$ such that the expression
        \begin{equation*}
            \sum_{v\in V} f(v)\mu_x^{\alpha}(v) - \sum_{w\in V} f(w)\mu_x^{\alpha}(w) 
        \end{equation*}
        in Proposition \ref{prop_W_sup}
        equals to 
        \begin{equation*}
             \sum_{x,y \in \cV} A(x,y) d(x,y).
        \end{equation*}
Recall that $f$ needs to satisfy $f(a)-f(b) \leq d(a,b)$. Suppose we have a bijection from $N^{\text{out}} (x)$ to $N^{\text{out}} (y)$ mapping $v$ to $w(v)$. Define $f:V \rightarrow \R$ as follows:
\begin{enumerate}[(i)]
    \item Case (a): $x\notin N^{\text{out}} (y)$, then
    \begin{enumerate}[1.]
        \item Let $f(x)=1, f(y)=0$ and $f(w(y))=-1$.
        
        \item Suppose $N^{\text{out}} (x)-\{y\}=\{v_2, \ldots, v_r\}$ and $N^{\text{out}} (y)-\{w(y)\}=\{w(v_2), \ldots, w(v_r)\}$. Let $f(v_2)=\min_{ v\in \{x,y,w(y)\}  } \{ f(v)+d(v_2,v) \}$ and $f(w(v_2))=f(v_2)-d(v_2,w(v_2))$.

        \item For $2\leq k < r$, let $S_k=\{x,y,w(y), v_2,\ldots,v_k, w(v_2),\ldots,w(v_k)\}$. Suppose we have defined $f$ on $S_k$ such that it satisfies $f(a)-f(b) \leq d(a,b)$ for every $a,b \in S_k$. Define 
        \[
        f(v_{k+1})= \min_{v\in S_k} \{ f(v)+d(v_{k+1},v) \}, 
        \]
        \[
        f(w(v_{k+1}))=f(v_{k+1})-d(v_{k+1}, w(v_{k+1})).
        \]
        Inductively, it follows that we have defined a function $f$ on $N^{\text{out}} (x) \cup N^{\text{out}} (y)\cup \{x,y\}$ such that $f(a)-f(b) \leq d(a,b)$ holds. Then this function can be extended to $V$ easily.
    \end{enumerate}
Observe that 
\begin{align*}
    [f(x)-f(y)]&\alpha + \frac{1-\alpha}{r}  \sum_{v\in N^{\text{out}}(x)} [ f(v) - f(w(v)) ] \\
    &=\alpha + \frac{1-\alpha}{r}\sum_{v\in N^{\text{out}}(x)} d(v,w(v)) 
    = \sum_{v,w\in \cV} A(v,w)d(v,w).
\end{align*}

    \item Case (b): $x\in N^{\text{out}} (y)$. Steps $2$ and $3$ remain the same as in case (a) except for Step 1. Since $w(y)=x$, we simply define $f(x)=1$ and $f(y)=0$. Then

\begin{align*}
    &[f(x)-f(y)]\alpha+\frac{1-\alpha}{r}  \sum_{v\in N^{\text{out}}(x)} [ f(v) - f(w(v)) ] \\
    &= [f(x)-f(y)]\alpha+\frac{1-\alpha}{r} \left( f(y)-f(x) + \sum_{v\in N^{\text{out}}(x)-\{y\} }[ f(v) - f(w(v)) ] \right) \\
    &=\alpha + \frac{1-\alpha}{r} \left( 0-1  + \sum_{v\in N^{\text{out}}(x)-\{y\} }  d(v,w(v))   \right) \\
    &= \alpha+ \frac{1-\alpha}{r} \left( \sum_{v\in N^{\text{out}} (x) - \{y\} } d(v, w(v)) -1 \right) 
    = \sum_{v,w\in \cV} A(v,w)d(v,w).
\end{align*}
\end{enumerate}
From above, we have deduced that the coupling $A$ is indeed an optimal coupling. It follows that $W(\mu_x^{\alpha}, \mu_y^{\alpha} ) = \sum_{x,y \in \cV} A(x,y) d(x,y)$. 

For $x\notin N^{\text{out}}(y)$. Since the coupling $A$ is optimal, it follows that 
    \[
    W(\mu_x^\a, \mu_y^\a ) 
    = \sum_{v,w\in \cV} A(v,w)d(v,w) \\ 
    = \alpha+ \frac{1-\alpha}{r} \left( \sum_{v\in N^{\text{out}} (x) } d(v, w(v)) \right).
    \]
    Therefore, 
    \[
    \dfrac{1-W(\mu_x^\a, \mu_y^\a )}{(1-\a)}= 1-\dfrac{1}{r} \left( \sum_{v\in N^{\text{out}}(x)} d(v,w(v))\right)
    \] and hence
    \[
    \kappa(x,y)
    = \lim_{\a \rightarrow 1} \dfrac{1-W(\mu_x^\a, \mu_y^\a )}{(1-\a)} 
    = 1-\dfrac{1}{r} \left( \sum_{v\in N^{\text{out}}(x)} d(v,w(v))   \right). 
    \]
    The case of  $x\in N^{\text{out}}(y)$ is similar. The proof is now complete.
\end{proof}

\begin{remark}
    If $d^{\text{out}}_x \neq d^{\text{out}}_y$, then the above construction cannot be applied since there does not exist a bijection between $N^{\text{out}} (x) $ and $N^{\text{out}} (y)$. 

    For a fixed vertex $v\in N^{\text{out}} (x)$, instead of choosing multiple vertices $w_{j_1}, \ldots, w_{j_{m}}$ such that $A(v,w_{1})+\cdots + A(v,w_{j_m})=\frac{1-\alpha}{r}$, which might increase the transport cost $\sum_{k=1}^{m} A(v, w_{j_k})d(v,w_{j_k})$. It is ideal to just choose one $w$ while taking into consideration that the sum $\sum_{i=1}^r d(v_i, w_{l(i)})$ needs to be minimum. This is exactly the reason the coupling is defined based on the bijection between $N^{\text{out}} (x) $ and $N^{\text{out}} (y)$.
\end{remark}

From the construction of coupling above, we define the following new notion.
\begin{definition}
\label{def_perfectdistancematching}
    Let $D=(\cV,\cA)$ be an undirected or directed simple graph and $U$ and $W$ be two nonempty subsets of $V$ (not necessary disjoint) with $|U|=|W|$. A \textit{perfect distance matching} from $U$ to $W$ is a bijection $F:U \rightarrow W$ such that $\sum_{u\in U} d(u,F(u))$ is minimum among all bijections from $U$ to $W$.
\end{definition}

Note that if $F$ is a perfect distance matching from $U$ to $W$, then its inverse $F^{-1}$ is not necessary a perfect distance matching from $W$ to $U$. A perfect distance matching $F$, if exists, can be viewed as a set of ordered pairs 
\[
M(F)=\{(u, F(u)) : u\in U\}.
\]
If $D$ is undirected and $d(u,F(u))=1$ holds for every $u\in U,$ which forces $U$ and $W$ to be disjoint, then $M(F)$ corresponds to a matching of $U\cup W$. If we further assume that $U \cup W= \cV$ so that $|U|=|W|=\frac{|\cV|}{2}$, then we have a perfect matching on $D$. Theorem \ref{theo_curvature_regular} essentially requires us to find a perfect distance matching from $N^{\text{out}}(x)$ (resp. $N^{\text{out}}(x)-\{y\}$) to $N^{\text{out}}(y)$ (resp. $N^{\text{out}}(y)-\{x\}$). 

If $u \in U \cap W$, then a perfect distance matching $F$ necessarily maps $u$ to itself.

\begin{lemma}
Let $F:U \rightarrow W$ be a perfect distance matching from $U$ to $W$, if $u \in U \cap W$, then $F$ maps $u$ to itself, i.e. $F(u)=u$.
\end{lemma}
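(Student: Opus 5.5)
The plan is an exchange argument by contradiction. Let $F:U\to W$ be a perfect distance matching in the sense of Definition \ref{def_perfectdistancematching}, and suppose some $u\in U\cap W$ has $F(u)=w\neq u$. Since $u\in W$ and $F$ is a bijection onto $W$, there is a unique $u'\in U$ with $F(u')=u$, and $u'\neq u$. Define a new bijection $G:U\to W$ that agrees with $F$ except for
\[
G(u)=u,\qquad G(u')=w .
\]
This $G$ is again a bijection from $U$ to $W$, since it just permutes two values of $F$. Its total cost differs from that of $F$ only in these two terms:
\[
\sum_{z\in U} d(z,G(z))-\sum_{z\in U} d(z,F(z)) = d(u',w)-d(u',u)-d(u,w),
\]
using $d(u,u)=0$.

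The first key step is the directed triangle inequality $d(u',w)\le d(u',u)+d(u,w)$. This holds in any strongly connected digraph because concatenating a shortest path from $u'$ to $u$ with one from $u$ to $w$ gives a path from $u'$ to $w$. It shows the difference above is $\le 0$, so $G$ is again a perfect distance matching.

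The second step is the special case $u'=w$, that is, $F$ swaps $u$ and $w$. Then the difference equals $-d(w,u)-d(u,w)<0$, because $u\neq w$ and distances between distinct vertices are at least $1$. This contradicts the minimality of $F$, so the lemma holds in this case.

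The main obstacle is strictness in the general case $u'\neq w$. The contradiction needs $d(u',w)<d(u',u)+d(u,w)$, and this fails exactly when $u$ lies on a shortest directed path from $u'$ to $w$. My first attempt would be to follow the cycle $u\mapsto F(u)\mapsto\cdots$ of $F$ through $U\cap W$ and look for a swap where the triangle inequality is strict, repeating the exchange until such a swap appears. Failing that, I would read the lemma as the convention already used in the proof of Theorem \ref{theo_curvature_regular}: among the minimizers one always takes a matching with $w(u)=u$ for common out-neighbours. By the nonpositivity in the first step, every perfect distance matching can be converted, by finitely many such swaps (each strictly increasing the number of fixed points of $U\cap W$ without raising the cost), into one that fixes every $u\in U\cap W$. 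I expect the equality case of the triangle inequality to be the only point where the universal wording needs this care.
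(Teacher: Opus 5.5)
\textbf{Comparison with the paper's proof.} Your exchange step is exactly the paper's. The paper takes $v$ with $F(v)=u$, sets $G(u)=u$ and $G(v)=F(u)$, and uses the triangle inequality to get $\sum_{z} d(z,G(z))\le \sum_{z} d(z,F(z))$. It then declares this a contradiction with minimality. As you noticed, a non-strict inequality contradicts nothing.

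\textbf{The statement as worded is false.} The equality case you isolated is not a technicality. Take the undirected path $a - u - b$ with $U=\{a,u\}$ and $W=\{u,b\}$. The bijections $a\mapsto u,\ u\mapsto b$ and $a\mapsto b,\ u\mapsto u$ both cost $2$. So both are perfect distance matchings, yet the first one moves $u\in U\cap W$.

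Consequently your ``first attempt'' cannot succeed in general: following the cycle of $F$ through $U\cap W$ in search of a strict swap will not always find one, so drop that idea. Your $2$-cycle case ($u'=w$) is correct and does give a strict decrease. That is the situation where the triangle inequality is automatically strict, so the universal claim holds there.

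\textbf{Your fallback proves the correct statement.} The argument is complete:
\begin{itemize}
\item Each swap does not raise the cost, so the new map is still a minimizer.
\item Each swap fixes $u$.
\item No swap can unfix a point. Only $u$ and $u'$ change, and neither was fixed before: $F(u)=w\neq u$, and $F(u')=u\neq u'$. When $u'=w$ the number of fixed points goes up by two.
\end{itemize}
So after at most $|U\cap W|$ swaps you obtain a perfect distance matching that is the identity on $U\cap W$.

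This existence version is all that Theorem \ref{theo_curvature_regular} actually uses. That theorem needs one to be able to choose $w(u)=u$ for common out-neighbours without losing minimality, and existence guarantees exactly that. Your proposal therefore repairs the paper's argument rather than merely reproducing it. The lemma should be restated as: there exists a perfect distance matching $F$ with $F(u)=u$ for every $u\in U\cap W$.
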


\begin{proof}
    Suppose on the contrary that $F(u)\neq u$. Let $v\in U$ such that $F(v)=u$. Define $G:U \rightarrow W$ such that $G(u)=u, G(v)=F(u),$ and $G(x)=F(x)$ for $x\in U-\{u,v\}$. Then,
    \begin{align*}
        \sum_{x\in U} d(x,G(x)) &=  \sum_{x\in U-\{u,v\}}  d(x,G(x)) + d(u,G(u))+d(v,G(v)) \\
         &=  \sum_{x\in U-\{u,v\}}  d(x,F(x)) + d(u,u)+d(v,F(u)) \\
         &\leq \sum_{x\in U-\{u,v\}}  d(x,F(x)) + 0+d(v,u)+d(u,F(u)) \\
         &= \sum_{x\in U} d(x,F(x)).
    \end{align*}
    This contradicts the assumption that $F$ is a perfect distance matching.  
\end{proof}
The above lemma justifies the choice in Theorem \ref{theo_curvature_regular} that if $x$ and $y$ has a common out-neighbor, say $u$, we always choose $w(u)$ to be $u$.

Now, we introduce a generalization of Extended Matching Condition defined in \cite{dagli2019extendedmathing}.

\begin{definition}
\label{def_perfectdistancepartitions}
    Let $D=(\cV,\cA)$ be a digraph.
    Let $(x,y) \in \cA$ be an arc such that $x \notin N^{\text{out}}(y)$.  A \textit{perfect distance partition} of $N^{\text{out}}(x)$ and $N^{\text{out}}(y),$ respectively, is the sets $\{P_1,\ldots, P_k\}$ and $\{Q_1,\ldots, Q_k\}$ with
    \begin{align}
    N^{\text{out}}(x) - N^{\text{out}}(y) &= P_1 \cup P_2 \cup \cdots P_k, \label{eq:perfdistpart1}\\
    N^{\text{out}}(y) - N^{\text{out}}(x)  &= Q_1 \cup Q_2 \cup \cdots Q_k, \label{eq:perfdistpart2}
    \end{align}
    and such that 
    \begin{enumerate}[(i)]
        \item for each $i=1,\ldots,k$, there exists a bijection $f_i: P_i \rightarrow Q_i$ with $d(v,f_i(v))=d_i$ for every $v\in P_i$; 

        \item for every $v\in P_i$ and $w\in Q_j$, $d(v,w)\geq d(v,f(v_i))=d_i$.
    \end{enumerate}
    For the case of $x \in N^{\text{out}}(y)$, the definition is modified by replacing $N^{\text{out}}(y)$ with $N^{\text{out}}(y) \cup \{y\}$ in \eqref{eq:perfdistpart1} and $N^{\text{out}}(x)$ with $N^{\text{out}}(x) \cup\{x\}$ in \eqref{eq:perfdistpart2}. 
\end{definition}

\begin{proposition}
Let $D=(\cV,\cA)$ be a digraph. Suppose that $(x,y) \in \cA$ is an arc that admits perfect distance partitions for $N^{\text{out}}(x)$ and $N^{\text{out}}(y)$. Then, the Lin-Lu-Yau Ricci curvature of $(x,y)$ is 
\begin{equation}
    \kappa(x,y)
    =\begin{cases}
        1-\dfrac{1}{|N^{\text{out}}(x)|}\displaystyle \sum_{i=1}^k |P_i|d_i,  
        &\quad \text{if $x \notin N^{\text{out}}(y),$ }\\[20pt]

        1-\dfrac{1}{|N^{\text{out}}(x)|}\left ( \displaystyle\sum_{i=1}^k |P_i|d_i- 1 \right), &\quad \text{if $x \in N^{\text{out}}(y).$ }
    \end{cases}
\end{equation}
\end{proposition}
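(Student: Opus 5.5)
The plan is to reduce everything to Theorem \ref{theo_curvature_regular}. By that theorem it suffices to exhibit a perfect distance matching from $N^{\text{out}}(x)$ to $N^{\text{out}}(y)$ (resp. from $N^{\text{out}}(x)-\{y\}$ to $N^{\text{out}}(y)-\{x\}$ in the case $x\in N^{\text{out}}(y)$) and to compute its total cost.

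First I check the cardinalities. Since the partitions are given, $|N^{\text{out}}(x)-N^{\text{out}}(y)|=\sum_i|P_i|=\sum_i|Q_i|=|N^{\text{out}}(y)-N^{\text{out}}(x)|$, using that each $f_i$ is a bijection. Hence $d^{\text{out}}_x=d^{\text{out}}_y=r$, so the theorem applies. The case $x\in N^{\text{out}}(y)$ is analogous: the modified partition covers $N^{\text{out}}(x)-(N^{\text{out}}(y)\cup\{y\})$ and $N^{\text{out}}(y)-(N^{\text{out}}(x)\cup\{x\})$, and adding back $y\mapsto x$ again gives equal degrees.

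Next I define the candidate bijection $F$. On a common out-neighbour $u$ it sets $F(u)=u$, and on $P_i$ it sets $F=f_i$. In the second case it additionally sets $F(y)=x$, which is the choice forced by Theorem \ref{theo_curvature_regular}. The cost is $\sum_i|P_i|d_i$, since common neighbours contribute $0$.

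The main step is optimality of $F$ among all bijections $G$ with the same domain and target, where $G$ must also fix $y\mapsto x$ in the second case. I write $G$ as a sum over vertices. If $v\in P_i$, I need $d(v,G(v))\ge d_i$; condition (ii) gives this whenever $G(v)\in Q_j$ for some $j$. The obstacle is a vertex $v\in P_i$ that $G$ sends to a common neighbour $u$. I would handle this by an exchange argument in the style of the preceding lemma. Some other vertex $u'$, either common or in some $P_j$, is then mapped by $G$ into some $Q_j$ or to a common vertex. Swapping so that $u$ maps to itself and $v$ maps to $G(u)$ does not increase the cost, by the triangle inequality $d(v,G(u))\le d(v,u)+d(u,G(u))$. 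Iterating this, I may assume without loss of generality that an optimal $G$ fixes every common neighbour. Then $G$ maps $\bigcup P_i$ bijectively onto $\bigcup Q_j$, and condition (ii) gives $\sum_v d(v,G(v))\ge\sum_i|P_i|d_i$. Hence $F$ is a perfect distance matching. I will also note that Theorem \ref{theo_curvature_regular} only uses the minimality of the sum, so any minimizer yields the formula.

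Finally, substituting the cost $\sum_i|P_i|d_i$ into the two cases of \eqref{eq_lin-lu-yau_curvature} gives both stated formulas with $r=|N^{\text{out}}(x)|$. In the second case the $-1$ comes from the pair $(y,x)$ as in the theorem.
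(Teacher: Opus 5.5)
Your proposal is correct and follows the same route as the paper. You take $F$ to be the identity on $N^{\text{out}}(x)\cap N^{\text{out}}(y)$ and $f_i$ on each $P_i$ (with $y\mapsto x$ in the second case), note its cost is $\sum_i |P_i| d_i$, and substitute into Theorem~\ref{theo_curvature_regular}. The paper simply asserts that $F$ is a perfect distance matching ``from the definition''; your triangle-inequality exchange argument supplies the justification that step needs. Strictly, your iteration only shows that an optimal $G$ can be taken to map $\bigcup_i P_i$ onto $\bigcup_j Q_j$, not that it fixes every common neighbour, but that is all the lower bound requires.
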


\begin{proof}
    It is easy to check that if we have a perfect distance partition, then $d^{\text{out}}(x)=d^{\text{out}}(y)$. For $x \notin N^{\text{out}}(y)$, we define a bijection $F$ from $N^{\text{out}}(x)$ to $N^{\text{out}}(y)$ by 
    \begin{enumerate}[(i)]
    \item $F|_{P_i}=f_i$,
    \item $F|_{N^{\text{out}}(x) \cap N^{\text{out}}(y)}= \mathrm{Id}$.
    \end{enumerate}

    It follows from Definition \ref{def_perfectdistancepartitions} that $F$ is a perfect distance matching from $N^{\text{out}}(x)$ to $N^{\text{out}}(y)$ with 
    \[
    \sum_{v \in N^{\text{out}}(x)} d(v,F(v)) =\sum_{i=1}^k |P_i|d_i.
    \]
    By Theorem \ref{theo_curvature_regular}, we have
    
    \begin{equation}
    \label{eq_k(x,y)_pdp_(a)}
    \kappa (x,y)=1-\frac{1}{|N^{\text{out}}(x)|} \sum_{i=1}^k |P_i|d_i .
    \end{equation}

    For $x \in N^{\text{out}}(y)$, by a similar argument, we get
    \begin{equation}
    \label{eq_k(x,y)_pdp_(b)}
    \kappa (x,y)=1-\frac{1}{|N^{\text{out}}(x)|}\left (\sum_{i=1}^k |P_i|d_i -1\right).
    \end{equation}
    This completes
    the proof.
\end{proof}

\subsection{Application of Theorem \ref{theo_curvature_regular}} \label{subsect_appl}


In this subsection, we shall demonstrate how Theorem \ref{theo_curvature_regular} can be applied to recover some recent results in \cite{hehl2026regularpositivericci, yamada2019Riccidirected, smith2014matching, dagli2019extendedmathing, cushing2025BEandRicci}.

For undirected locally finite simple graphs, Hehl \cite{hehl2026regularpositivericci} obtained an explicit formula for the Lin-Lu-Yau Ricci curvature as follows:
\begin{theorem}\cite[Theorem 1.1]{hehl2026regularpositivericci}
\label{hehl2026regularpositivericci}
Let $G=(V,E)$ be a locally finite graph. Let $x,y \in V$ be of equal degree $r$ with $x \sim y$. Then, the Lin–Lu–Yau Ricci curvature $\kappa(x,y)$ is 
    \begin{equation}
    \kappa(x,y)=\dfrac{1}{r} \left(r+1-\inf_{\phi \in \cA_{xy}} \sum_{v\in S_1(x)-B_1(y)} d(v,\phi(v)) \right),
    \end{equation}
where $\cA_{xy}$ denotes the set of all bijections $\phi : S_1(x)-B_1(y) \rightarrow S_1(y)-B_1(x)$.
\end{theorem}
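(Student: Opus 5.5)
We derive Hehl's formula from Theorem \ref{theo_curvature_regular} by viewing $G$ as a digraph in which every edge $\{u,v\}$ is replaced by the two arcs $(u,v)$ and $(v,u)$. Distances are unchanged. We have $N^{\text{out}}(z)=S_1(z)$ and $d^{\text{out}}_z=\deg z$, and since $x\sim y$ we are always in the case $x\in N^{\text{out}}(y)$. Strong connectivity only matters so that all distances involved are finite. Since $d(v,w)\le 3$ for $v\in S_1(x)$ and $w\in S_1(y)$, we may restrict attention to the ball $B_3(x)$ and assume the relevant distances are finite. Theorem \ref{theo_curvature_regular} then gives
\[
\kappa(x,y)=1-\frac1r\Bigl(\sum_{v\in S_1(x)-\{y\}} d(v,w(v))-1\Bigr)=\frac1r\Bigl(r+1-\min_{F}\sum_{v\in S_1(x)-\{y\}} d(v,F(v))\Bigr),
\]
where $F$ ranges over bijections $S_1(x)-\{y\}\to S_1(y)-\{x\}$. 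Both sets have size $r-1$.

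It remains to show that this minimum equals $\inf_{\phi\in\cA_{xy}}\sum_{v\in S_1(x)-B_1(y)} d(v,\phi(v))$. Set $C=S_1(x)\cap S_1(y)$, the common neighbours. Then
\[
S_1(x)-\{y\}=C\sqcup\bigl(S_1(x)-B_1(y)\bigr),\qquad S_1(y)-\{x\}=C\sqcup\bigl(S_1(y)-B_1(x)\bigr).
\]
Given any $\phi\in\cA_{xy}$, extend it by the identity on $C$. This yields a bijection $F$ with the same cost, because $d(c,c)=0$. Hence the minimum is at most the infimum.

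For the reverse inequality, take a minimizing $F$, i.e. a perfect distance matching in the sense of Definition \ref{def_perfectdistancematching}. The preceding lemma shows $F(c)=c$ for every $c\in C$, since $C$ lies in both the domain and the codomain. Therefore $F$ restricts to a bijection $\phi\in\cA_{xy}$ with the same cost. Both directions together give equality, and since all sets are finite the infimum is attained.

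The main obstacle is the transfer from undirected to directed graphs. I would verify that Yamada's directed definition, applied to the symmetric digraph, coincides with the usual Lin--Lu--Yau curvature: the random walk measures and the distances agree, so the two definitions match. The rest is bookkeeping with the common neighbours, which the lemma handles directly.
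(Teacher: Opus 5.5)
Your proposal is correct and follows the paper's own route: regard $G$ as a symmetric digraph, apply the $x\in N^{\text{out}}(y)$ case of Theorem \ref{theo_curvature_regular}, and identify the minimum over bijections $S_1(x)-\{y\}\to S_1(y)-\{x\}$ with the infimum over $\cA_{xy}$, since common neighbours can be matched to themselves at zero cost; you also spell out both inequalities and the connectivity and definition bookkeeping more carefully than the paper does. One small correction: the swap in the lemma only gives a non-strict inequality, so it proves that \emph{some} minimizer fixes $C$, not that every one does (e.g.\ if $c\in C$, $v\in S_1(x)-B_1(y)$, $w\in S_1(y)-B_1(x)$ with $v\sim c\sim w$ and $d(v,w)=2$, then $\{v\mapsto c,\ c\mapsto w\}$ and $\{v\mapsto w,\ c\mapsto c\}$ both cost $2$), so your reverse inequality should start from a minimizer obtained by iterating that swap, which never increases the cost, until it fixes $C$.
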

We show that Theorem \ref{theo_curvature_regular} reduces to Theorem \ref{hehl2026regularpositivericci} when $G$ is undirected.
\begin{proof}
Since $G$ is undirected, we have $x\in N^{\text{out}}(y)=N(y)$. Our formula gives
\begin{align*}
      \kappa(x,y) &= 1-\dfrac{1}{r} \left(  \displaystyle \sum_{v\in N(x)-\{y\} } d(v,w(v)) -1  \right) \\
      &= \dfrac{1}{r} \left( r+1 - \sum_{v\in N(x)-\{y\} } d(v,w(v))  \right).
\end{align*}
Recall that if a vertex $v$ is in both $N(x)$ and $N(y)$, then $w(v)$ is always chosen to be $v$, giving $d(v,w(v))=0$. Thus, $\kappa(x,y)$ reduced to 
\begin{equation*}
      \kappa(x,y) = \dfrac{1}{r} \left( r+1 - \sum_{v\in S_1(x)-B_1(y) } d(v,w(v))  \right).
\end{equation*}
Notice that $N(x)=S_1(x)$ and $B_1(y)=N(y) \cup \{y\}$. 

If $\phi:S_1(x)-B_1(y) \rightarrow S_1(y)-B_1(x)$ is any bijection, then our choice guarantees that 
$\sum_{v\in S_1(x)-B_1(y) } d(v,w(v)) \leq \sum_{v\in S_1(x)-B_1(y) } d(v,\phi(v))$. In other words, 
\begin{align*}
    \sum_{v\in S_1(x)-B_1(y) } d(v,w(v))=\inf_{\phi} \sum_{v\in S_1(x)-B_1(y) } d(v,\phi(v)),
\end{align*}
where infimum is taken over all bijections $\phi:S_1(x)-B_1(y) \rightarrow S_1(y)-B_1(x)$. Therefore,
\begin{equation}
    \kappa(x,y)=\dfrac{1}{r} \left(r+1-\inf_{\phi} \sum_{v\in S_1(x)-B_1(y)} d(v,\phi(v)) \right),
\end{equation}
with infimum taken over all bijections $\phi:S_1(x)-B_1(y) \rightarrow S_1(y)-B_1(x)$. This completes the proof.
\end{proof}



\begin{corollary}{\cite[Corollary 2.13]{yamada2019Riccidirected}}
\label{corollary_nonpositivecurvature}
    Assume the hypotheses of Theorem \ref{theo_curvature_regular}. Suppose further that $N^{\text{out}}(x) \cap N^{\text{out}}(y) = \emptyset$ and $x \notin N^{\text{out}}(y)$, then the Lin-Lu-Yau Ricci curvature of arc $(x,y)$ is always non-positive, i.e. $\kappa(x,y)\leq 0$.
\end{corollary}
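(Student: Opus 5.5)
We apply the first case of Theorem \ref{theo_curvature_regular}, which is available because $x\notin N^{\text{out}}(y)$:
\[
\kappa(x,y)=1-\frac{1}{r}\sum_{v\in N^{\text{out}}(x)} d(v,w(v)),
\]
where $v\mapsto w(v)$ is a perfect distance matching from $N^{\text{out}}(x)$ to $N^{\text{out}}(y)$. The claim $\kappa(x,y)\le 0$ is therefore equivalent to
\[
\sum_{v\in N^{\text{out}}(x)} d(v,w(v))\ \ge\ r.
\]
Since there are exactly $r$ summands, it suffices to show that each satisfies $d(v,w(v))\ge 1$.

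This is where the disjointness hypothesis is used. Because $N^{\text{out}}(x)\cap N^{\text{out}}(y)=\emptyset$, no $v\in N^{\text{out}}(x)$ can equal its image $w(v)\in N^{\text{out}}(y)$. Hence $v\neq w(v)$, and since directed distances between distinct vertices are positive integers (the digraph is strongly connected, so all distances are finite), we get $d(v,w(v))\ge 1$ for every $v$. Summing gives the inequality above, and so $\kappa(x,y)\le 1-\frac{1}{r}\cdot r=0$.

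There is one point to check: the summand with $v=y$. The vertex $y$ lies in $N^{\text{out}}(x)$, and $w(y)\in N^{\text{out}}(y)$ is distinct from $y$ because the graph is simple and has no loops. So $d(y,w(y))\ge 1$ holds as well, in fact with equality. No step here is a real obstacle; the only care needed is to confirm that Theorem \ref{theo_curvature_regular}'s hypotheses (equal out-degree $r$, and $(x,y)$ an arc) are exactly those assumed in the statement. The disjointness condition is what rules out any zero-cost pairs in the matching.
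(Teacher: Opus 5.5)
Your proof is correct and follows the paper's argument: both use case (a) of Theorem \ref{theo_curvature_regular} and the disjointness of the out-neighbourhoods to get $d(v,w(v))\ge 1$ for each of the $r$ summands; the paper only phrases this as a contradiction with $\kappa(x,y)>0$. Your separate check of the summand $v=y$ is harmless but redundant, since $w(y)=y$ would put $y$ in $N^{\text{out}}(x)\cap N^{\text{out}}(y)$, which disjointness already excludes.
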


\begin{proof}
    Suppose that $\kappa(x,y)>0$. Since $x \notin N^{\text{out}}(y)$, by Theorem \ref{theo_curvature_regular}, we have 
    \begin{align*}
          \kappa(x,y)= 1-\dfrac{1}{r} \left( \sum_{v\in N^{\text{out}}(x)} d(v,w(v))    \right) >0.  
    \end{align*}
    This implies that 
    \[
    \sum_{v\in N^{\text{out}}(x)} d(v,w(v)) < r.
    \]
    However,
    $\sum_{v\in N^{\text{out}}(x)} d(v,w(v))\geq 1\cdot |N^{\text{out}}(x)|=r$ as $N^{\text{out}}(x) \cap N^{\text{out}}(y) = \emptyset$, which is a contradiction. Hence, $\kappa(x,y) \leq 0$.
\end{proof}

\begin{definition}
Suppose $G=(\cV, \cE)$ is a simple undirected graph. Let $\{x,y\}$ be an edge of $G$ with $d_x=d_y=r$.
    \begin{enumerate}
    \item In \cite{smith2014matching}, the edge $\{x,y\}$ is said to satisfy the \textit{Local Matching condition} if there is a perfect matching between $N(x) - (\{y\} \cup N(y))$ and $N(y) - (\{x\} \cup N(x))$ within the subgraph induced by $N(x) \cup N(y) \cup \{x,y\}$. \\    
    This is equivalent to the existence of a perfect distance partitions $\{P_1\}$ for $N^{\text{out}}(x) - (\{y\} \cup N^{\text{out}}(y))$ and $\{Q_1\}$ for $N^{\text{out}}(y) - (\{x\} \cup N^{\text{out}}(x))$  with $d_1=1$.

    \item In \cite{dagli2019extendedmathing}, the \textit{Extended Matching Condition of Type $2$} is equivalent to 
        \begin{enumerate}
        \item $N^{\text{out}}(x) - (\{y\} \cup N^{\text{out}}(y)) = P_1\cup \{p\}$.

        \item $N^{\text{out}}(y) - (\{x\} \cup N^{\text{out}}(x)) = Q_1\cup \{q\}$.
        \end{enumerate}
        with $d_1=1$ and $d(p,q)=3$. 

    \item In \cite{dagli2019extendedmathing}, the \textit{Extended Matching Condition of Type $3$}  is equivalent to \begin{enumerate}
    \item $N^{\text{out}}(x) - (\{y\} \cup N^{\text{out}}(y)) = P_1\cup \{p\}$.

    \item $N^{\text{out}}(y) - (\{x\} \cup N^{\text{out}}(x)) = Q_1\cup \{q\}$.
\end{enumerate}
with $d_1=1$ and $d(p,q)=2$.
    \end{enumerate}
\end{definition}

\begin{corollary}{\cite[Theorem 6.3]{smith2014matching}}
\label{cor3.10}
Suppose an edge $\{x,y\}$ with $d_x=d_y=r$ satisfies the Local Matching Condition. Let $r$ be the common degree of $x$ and $y.$ Then,
\begin{equation}
    \kappa(x,y)= \frac{2+|N^{\text{out}} (x) \cap N^{\text{out}} (y)|}{r}.
\end{equation}
\end{corollary}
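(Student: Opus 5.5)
We view the undirected graph $G$ as a symmetric digraph, so $N^{\text{out}}(v)=N(v)$ and $x\in N^{\text{out}}(y)$. The formula then follows from the second case of Theorem \ref{theo_curvature_regular}, once we exhibit a perfect distance matching from $N(x)-\{y\}$ to $N(y)-\{x\}$ and compute its cost. Set
\[
C=N(x)\cap N(y),\qquad P=N(x)-(\{y\}\cup N(y)),\qquad Q=N(y)-(\{x\}\cup N(x)).
\]
Then $N(x)-\{y\}=C\sqcup P$ and $N(y)-\{x\}=C\sqcup Q$, and since $d_x=d_y=r$ we get $|P|=|Q|=r-1-|C|$.

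The Local Matching Condition supplies a bijection $\phi:P\to Q$ with $v\sim\phi(v)$, so $d(v,\phi(v))=1$ for every $v\in P$. Define $F$ on $C\sqcup P$ by $F|_C=\mathrm{Id}$ and $F|_P=\phi$. Its cost is $\sum_v d(v,F(v))=|P|=r-1-|C|$.

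The main step is to show $F$ is minimal among all bijections $C\sqcup P\to C\sqcup Q$. Let $G'$ be any such bijection. Each $v\in P$ is distinct from every element of $C\sqcup Q$, because $P$ is disjoint from $N(y)$. Hence $d(v,G'(v))\ge 1$ for all $v\in P$, which gives
\[
\sum_{v\in C\sqcup P} d(v,G'(v))\ \ge\ |P|.
\]
So $F$ attains the minimum and is a perfect distance matching. This lower bound replaces any need for the lemma that common neighbours are fixed. Equivalently, one may invoke the preceding Proposition with the single perfect distance partition $\{P\},\{Q\}$ and $d_1=1$. Condition (ii) of that partition holds automatically because distinct vertices are at distance at least $1$.

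Substituting into the second case of Theorem \ref{theo_curvature_regular} gives
\[
\kappa(x,y)=1-\frac{1}{r}\bigl((r-1-|C|)-1\bigr)=\frac{2+|N(x)\cap N(y)|}{r},
\]
which is the claim. The only subtle point is bookkeeping: the sum in Theorem \ref{theo_curvature_regular} runs over $N(x)-\{y\}$ with $w(y)=x$, and this is exactly what yields the $-1$ and hence the $+2$.
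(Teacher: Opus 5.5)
Your proof is correct and takes essentially the paper's route: the paper substitutes the single perfect distance partition $\{P_1\},\{Q_1\}$, with $d_1=1$ and $|P_1|=r-1-|N^{\text{out}}(x)\cap N^{\text{out}}(y)|$, into \eqref{eq_k(x,y)_pdp_(b)}, which is the second case of Theorem \ref{theo_curvature_regular}. Your explicit check that the bijection (identity on common neighbours, plus the local matching) minimizes the total cost, using $d(v,G'(v))\ge 1$ for $v\in P$, is a useful addition, since the paper's partition proposition asserts this step without proof.
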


\begin{proof}
     By Equation \eqref{eq_k(x,y)_pdp_(b)}, we obtain 
    \begin{align*}
        \kappa (x,y)&=1-\frac{1}{|N^{\text{out}}(x)|}\left (\sum_{i=1}^k |P_i|d_i -1\right) \\
        &=1-\frac{1}{r}(r-1- |N^{\text{out}} (x) \cap N^{\text{out}} (y)|-1) \\
        &= \frac{2+|N^{\text{out}} (x) \cap N^{\text{out}} (y)|}{r}. \qedhere
    \end{align*}
\end{proof}

\begin{corollary}{\cite[Theorem 2.7]{dagli2019extendedmathing}}
\label{cor3.11}
Suppose an edge $\{x,y\}$ with $d_x=d_y=r$ satisfies the Extended Matching Condition of Type $2$. Then,
\[
\kappa (x,y) = \frac{1+|N^{\text{out}} (x) \cap N^{\text{out}} (y)|}{r}.
\]
\end{corollary}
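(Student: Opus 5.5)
The plan is to follow the proof of Corollary \ref{cor3.10} and apply the perfect distance partition formula \eqref{eq_k(x,y)_pdp_(b)}. Since $G$ is undirected, we have $x\in N^{\text{out}}(y)$, so the second case of that formula (equivalently, the second case of Theorem \ref{theo_curvature_regular}) is the relevant one. Write $c=|N^{\text{out}}(x)\cap N^{\text{out}}(y)|$. The sets $N^{\text{out}}(x)-(\{y\}\cup N^{\text{out}}(y))$ and $N^{\text{out}}(y)-(\{x\}\cup N^{\text{out}}(x))$ each have size $r-1-c$. The Type~2 condition decomposes them as $P_1\cup\{p\}$ and $Q_1\cup\{q\}$, where $f_1:P_1\to Q_1$ has all distances $d_1=1$ and $d(p,q)=3$. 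Hence $|P_1|=r-2-c$.

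The first step is to observe that $\{P_1,\{p\}\}$ and $\{Q_1,\{q\}\}$ form a perfect distance partition with $d_1=1$ and $d_2=3$. Condition (i) is immediate. Condition (ii) is what requires work, and I expect it to be the main obstacle. For $v\in P_1$ and any $w$ in the $y$-side set, we have $d(v,w)\ge 1=d_1$ because $v\neq w$ (the two sets are disjoint). The difficulty is the element $p$: we need $d(p,w)\ge 3$ for all $w\in Q_1$. The Extended Matching Condition of Type~2 in \cite{dagli2019extendedmathing} is exactly the configuration where $p$ has no short connection to the $y$-side. I would take this as part of what "equivalent to" in the definition above encodes, reading it as saying that $p$ is at distance $3$ from all of $N(y)-B_1(x)$.

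If that reading is too strong, I would fall back on Theorem \ref{theo_curvature_regular} directly. There the task is to show that the matching $f_1\cup\{(p,q)\}$, extended by the identity on common neighbours, is a perfect distance matching. Any bijection $\phi$ from $N(x)-B_1(y)$ to $N(y)-B_1(x)$ has cost at least $r-1-c$, since every distance is at least $1$. To beat cost $r+1-c$, that is, to achieve total $\le r-c$, all pairs would need distance $1$ except possibly one pair at distance $2$. I would argue that the Type~2 hypothesis, which asserts that the local structure forces $p$ into a $3$-pair, excludes this possibility.

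With optimality in hand, the computation is routine. We get
\[
\sum_{i}|P_i|d_i=(r-2-c)\cdot 1+3=r+1-c,
\]
and therefore
\[
\kappa(x,y)=1-\frac{1}{r}\bigl(r+1-c-1\bigr)=\frac{1+c}{r},
\]
which is the stated formula.
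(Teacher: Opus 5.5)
\textbf{There is a genuine gap: your final step contains an arithmetic error, and that error hides a real inconsistency.} With your value $\sum_i |P_i|d_i=(r-2-c)\cdot 1+3=r+1-c$, the case-(b) formula gives
\[
\kappa(x,y)=1-\frac{1}{r}\bigl(r+1-c-1\bigr)=1-\frac{r-c}{r}=\frac{c}{r},
\]
not $\frac{1+c}{r}$. So if the exceptional pair really satisfies $d(p,q)=3$ and your matching is optimal, you have proved the formula of Corollary \ref{cor3.12}. That contradicts the statement you are trying to prove.

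The value $\frac{1+c}{r}$ requires $\sum_i |P_i|d_i=r-c$, which means the exceptional pair lies at distance $2$: $(r-2-c)\cdot 1+2=r-c$. This is exactly what the paper's proof uses, since it writes $\sum_i|P_i|d_i-1=r-c-1$. Correspondingly, the proof of Corollary \ref{cor3.12} uses $\sum_i|P_i|d_i=r-c+1$, i.e.\ distance $3$. So the distances attached to Types $2$ and $3$ in the paper's definition list are interchanged, and taking ``$d(p,q)=3$'' at face value cannot give the claimed formula.

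\textbf{The repair} is to take $d_2=d(p,q)=2$.
\begin{itemize}
\item Condition (ii) of a perfect distance partition is automatic for $v\in P_1$, because distinct vertices are at distance at least $1$.
\item For $p$, condition (ii) says $d(p,w)\ge 2$ for every $w\in Q_1\cup\{q\}$, i.e.\ $p$ has no neighbour among the exclusive neighbours of $y$.
\item This is what excludes the only way to beat cost $r-c$: a distance-$1$ perfect matching of total cost $r-1-c$.
\end{itemize}
Your worry about needing $d(p,w)\ge 3$, and your fallback analysis of how to beat $r+1-c$, both target the wrong threshold.

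With $d_2=2$ in place, \eqref{eq_k(x,y)_pdp_(b)} gives $\kappa(x,y)=1-\frac{1}{r}(r-c-1)=\frac{1+c}{r}$, as in the paper.
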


\begin{proof}
    By Equation \eqref{eq_k(x,y)_pdp_(b)}, we obtain
    \begin{align*}
        \kappa (x,y)&=1-\frac{1}{|N^{\text{out}}(x)|}\left (\sum_{i=1}^k |P_i|d_i -1\right) \\
        &=1-\frac{1}{r}(r- |N^{\text{out}} (x) \cap N^{\text{out}} (y)|-1) \\
    &= \frac{1+|N^{\text{out}} (x) \cap N^{\text{out}} (y)|}{r}. \qedhere
    \end{align*}
\end{proof}

\begin{corollary}{\cite[Theorem 2.10]{dagli2019extendedmathing}}
\label{cor3.12}
Suppose an edge $\{x,y\}$ with $d_x=d_y=r$ satisfies the Extended Matching Condition of Type $3$. Then,
\[
\kappa (x,y) = \frac{|N^{\text{out}} (x) \cap N^{\text{out}} (y)|}{r}.
\]
\end{corollary}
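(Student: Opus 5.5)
The plan is to treat Corollary \ref{cor3.12} exactly like Corollaries \ref{cor3.10} and \ref{cor3.11}. We apply the curvature formula \eqref{eq_k(x,y)_pdp_(b)} for arcs with $x\in N^{\text{out}}(y)$ that admit perfect distance partitions. This case applies because the graph is undirected, so $x\in N(y)=N^{\text{out}}(y)$. The partitions to use are $\{P_1,\{p\}\}$ of $N(x)-(\{y\}\cup N(y))$ and $\{Q_1,\{q\}\}$ of $N(y)-(\{x\}\cup N(x))$. The common vertices $N(x)\cap N(y)$ are matched to themselves by the Lemma preceding Definition \ref{def_perfectdistancepartitions}, and they contribute $0$. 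The pair $y\mapsto x$ contributes the $-1$ in \eqref{eq_k(x,y)_pdp_(b)}.

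Write $c=|N^{\text{out}}(x)\cap N^{\text{out}}(y)|$. The set $N(x)-(\{y\}\cup N(y))$ has $r-1-c$ elements, so $|P_1|=r-2-c$ and $d_1=1$. The target value $\kappa(x,y)=c/r$ forces
\[
\sum_i |P_i|d_i-1=r-c, \qquad\text{that is,}\qquad (r-2-c)+d(p,q)=r+1-c .
\]
Hence the isolated pair must satisfy $d(p,q)=3$. So the first step is to fix the correct reading of the Type~$3$ condition. In \cite{dagli2019extendedmathing}, Type~$3$ is the configuration in which the leftover pair $p,q$ is at distance $3$, and Type~$2$ is the one with distance $2$. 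The distances written in the Definition above appear to be interchanged: with $d(p,q)=2$ one obtains $(1+c)/r$, which is the value stated in Corollary \ref{cor3.11}, not the value in the present statement. I will therefore work with the Type~$3$ condition as in \cite{dagli2019extendedmathing}: a perfect matching between $P_1$ and $Q_1$ by edges, and $d(p,q)=3$.

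The main step, and the expected obstacle, is to check that this is genuinely a perfect distance partition, i.e.\ condition (ii) of Definition \ref{def_perfectdistancepartitions}. For $v\in P_1$ the bound $d(v,w)\ge 1=d_1$ holds trivially, since $P_1\cap Q=\emptyset$. For $v=p$ we need $d(p,w)\ge 3$ for every $w\in Q_1\cup\{q\}$. This does not follow from $d(p,q)=3$ alone, and it must be extracted from the hypotheses of the Type~$3$ condition in \cite{dagli2019extendedmathing}. Those hypotheses say, in effect, that no edge and no length-$2$ path inside $N(x)\cup N(y)\cup\{x,y\}$ can be used to improve the matching.

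If condition (ii) is awkward to verify directly, I would instead prove minimality of the matching $F$ by an exchange argument. Every $v\in N(x)-B_1(y)$ has $d(v,w)\ge1$ for every $w\in N(y)-B_1(x)$. A bijection with total cost below $r+1-c$ would therefore need total cost exactly $r-c$ or $r-1-c$. Such a bijection would use only edges, plus at most one distance-$2$ pair. Combined with the edge matching of $P_1$ and $Q_1$, this would produce either a perfect matching of $N(x)-B_1(y)$ with $N(y)-B_1(x)$ (the Local Matching situation) or a Type~$2$ configuration. Both are excluded in the Type~$3$ setting.

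Once $F$ is known to be a perfect distance matching, Theorem \ref{theo_curvature_regular} (equivalently \eqref{eq_k(x,y)_pdp_(b)}) gives
\[
\kappa(x,y)=1-\frac1r\bigl((r-2-c)\cdot 1+3-1\bigr)=1-\frac{r-c}{r}=\frac{c}{r},
\]
which is the stated formula.
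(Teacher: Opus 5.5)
Your proposal is correct and is essentially the paper's proof: it substitutes the Type~3 data into \eqref{eq_k(x,y)_pdp_(b)}, and your reading $d(p,q)=3$ is exactly what the paper's own arithmetic $1-\frac{1}{r}\bigl(r-|N^{\text{out}}(x)\cap N^{\text{out}}(y)|+1-1\bigr)$ uses, even though its definition prints the value $2$. The optimality you flag as the main obstacle is not verified in the paper either; it is built into the paper's wording that the Type~3 condition ``is equivalent to'' a perfect distance partition, so condition (ii) is treated as part of the hypothesis.
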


\begin{proof}
    By Equation \eqref{eq_k(x,y)_pdp_(b)}, we obtain
    \begin{align*}
        \kappa (x,y)&=1-\frac{1}{|N^{\text{out}}(x)|}\left (\sum_{i=1}^k |P_i|d_i -1\right) \\
        &=1-\frac{1}{r}(r- |N^{\text{out}} (x) \cap N^{\text{out}} (y)|+1-1) \\
    &= \frac{|N^{\text{out}} (x) \cap N^{\text{out}} (y)|}{r}. \qedhere
    \end{align*}

\end{proof}

Next, the lemma below is crucial in determining the Lin-Lu-Yau Ricci curvature for Cayley graphs of \textit{Right Angled Artin-Coxeter Hybrids} (RAACHs) groups. We give a shorter proof by using Theorem \ref{theo_curvature_regular}. 

\begin{proposition}{\cite[Lemma 20]{cushing2025BEandRicci}}
\label{lemma_cushing2025}
Let $G = (V,E)$ be an undirected graph with edge $\{x,y\}$. Assume that the neighborhood structures of x and y are as illustrated in Figure \ref{fig:cushinglemma20}, with
or without an additional common vertex z (illustrated in blue), so the degrees of x
and y are equal and either $n+l+2$ or $n+l+1$ (depending on whether $z \in V$ or $z \notin V$ ).
Suppose that
\begin{enumerate}[i.]
    \item $d(x_i, y_j) = 3$ for $i,j \in [l]$,
    \item $d(x_i, v_j) = d(y_i, u_j) = 3$ for $i\in [l]$ and $j\in [n]$,
    \item $d(u_j,v_j) = 1 $ for $ j\in [n]$.
\end{enumerate}
Then, we have
\begin{align*}
    \kappa(x,y)
    =\begin{cases}
        \dfrac{3-2l}{l+n+2}, &\text{ if $z \in V$,} \\[20pt]
        \dfrac{2-2l}{l+n+1}, &\text{ if $z \notin V.$}
    \end{cases}
\end{align*}
\end{proposition}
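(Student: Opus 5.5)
The plan is to apply Theorem \ref{theo_curvature_regular} in the undirected case, exactly as in the reduction to Theorem \ref{hehl2026regularpositivericci}. I read Figure \ref{fig:cushinglemma20} as follows:
\[
N(x)=\{y,u_1,\dots,u_n,x_1,\dots,x_l\}\,(\cup\{z\}),\qquad N(y)=\{x,v_1,\dots,v_n,y_1,\dots,y_l\}\,(\cup\{z\}),
\]
with all the $u_j,x_i$ distinct from all the $v_j,y_i$, and $z$ the only possible common neighbour. Since $G$ is undirected, $x\in N^{\text{out}}(y)$, so the second case of \eqref{eq_lin-lu-yau_curvature} applies with $w(y)=x$ and $r=n+l+2$ (resp. $n+l+1$). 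By the lemma on perfect distance matchings, $w(z)=z$, which contributes $0$. The whole computation therefore reduces to finding the value $M$ of a perfect distance matching from $U=\{u_1,\dots,u_n,x_1,\dots,x_l\}$ to $W=\{v_1,\dots,v_n,y_1,\dots,y_l\}$.

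\emph{Upper bound.} Take the bijection $u_j\mapsto v_j$ and $x_i\mapsto y_i$. By hypothesis (iii) each $u_j$ costs $1$, and by hypothesis (i) each $x_i$ costs $3$. Hence $M\le n+3l$.

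\emph{Lower bound.} Let $\phi:U\to W$ be any bijection. Every element of $W$ is either some $y_j$ or some $v_j$, so by hypotheses (i) and (ii) every $x_i$ satisfies $d(x_i,\phi(x_i))=3$. Since $U\cap W=\emptyset$, every $u_j$ satisfies $d(u_j,\phi(u_j))\ge 1$. Therefore $\sum_{v\in U}d(v,\phi(v))\ge 3l+n$, and so $M=n+3l$.

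Substituting into Theorem \ref{theo_curvature_regular} gives
\[
\kappa(x,y)=1-\frac{1}{r}\bigl(n+3l-1\bigr)=\frac{r-n-3l+1}{r}.
\]
For $r=n+l+2$ this is $\frac{3-2l}{l+n+2}$, and for $r=n+l+1$ it is $\frac{2-2l}{l+n+1}$, matching the two cases of the statement.

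The only delicate point is the reading of the figure. I am assuming that the $u_j,x_i$ are not neighbours of $y$, that the $v_j,y_i$ are not neighbours of $x$, and that $z$ is the sole common neighbour. This disjointness is exactly what gives the lower bound $d\ge 1$ for the $u_j$ and makes $z$ the only zero-cost pair. Everything else is a direct substitution.
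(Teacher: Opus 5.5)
Your proposal is correct and is essentially the paper's proof. Both apply the case $x\in N^{\text{out}}(y)$ of Theorem \ref{theo_curvature_regular} with the pairing $(x_i,y_i)$, $(u_j,v_j)$, $(z,z)$, whose cost $3l+n$ gives both formulas; the one difference is that you justify minimality (each $x_i$ costs exactly $3$ by (i)--(ii), each $u_j$ costs at least $1$ by disjointness, and the exchange lemma lets you fix $z$), whereas the paper simply asserts that this pairing is clearly minimal.
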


\begin{proof}
    Suppose $z\in V$. From Figure \ref{fig:cushinglemma20}, it is clear that the pairs $(x_i,y_i), (u_j,v_j)$ for $i\in [l]$ and $j\in [n],$ and $(z,z)$ minimize 
    \[
    \sum_{v\in N^{\text{out}}(x)}  d(v,w(v))=3l+n.
    \] By Theorem \ref{theo_curvature_regular}, we have
        \[
        \kappa(x,y) 
        = 1-\dfrac{1}{l+n+2}(3l+n-1) 
        =\frac{3-2l}{l+n+2}.
        \]
    The case of $z\notin V$ can be argued in a similar fashion.
\end{proof}

\begin{figure}[htp!]
\begin{center}
\begin{tikzpicture}[
    scale=1.0,
    vertex/.style={circle, fill=black, inner sep=1.2pt},
    bluevertex/.style={circle, fill=blue!80!black, inner sep=1.5pt},
    group/.style={ellipse, draw, minimum width=1.8cm, minimum height=3.7cm, thick}
]

    \node[vertex, label=below left:{$x$}] (x) at (0,0) {};
    \node[vertex, label=below right:{$y$}] (y) at (4,0) {};
    \draw[thick] (x) -- (y);

    \node[bluevertex, label=above:{\color{blue!80!black}$z$}] (z) at (2,2) {};
    \draw[dashed, blue!80!black, thick] (x) -- (z);
    \draw[dashed, blue!80!black, thick] (y) -- (z);

    \node[group] (left_group) at (-1.5,0.8) {};
    \node[vertex, label=left:{$x_1$}] (x1) at (-1.5,0) {};
    \node[vertex, label=left:{$x_2$}] (x2) at (-1.5,0.6) {};
    \node at (-1.5,1.2) {$\vdots$};
    \node[vertex, label=above:{$x_\ell$}] (xl) at (-1.5,1.8) {};
    
    \foreach \n in {x1, x2, xl} \draw (x) -- (\n);

    \node[group] (right_group) at (5.5,0.8) {};
    \node[vertex, label=right:{$y_1$}] (y1) at (5.5,0) {};
    \node[vertex, label=right:{$y_2$}] (y2) at (5.5,0.6) {};
    \node at (5.5,1.2) {$\vdots$};
    \node[vertex, label=above:{$y_\ell$}] (yl) at (5.5,1.8) {};
    
    \foreach \n in {y1, y2, yl} \draw (y) -- (\n);

    \node[group] (u_group) at (1,-2.5) {};
    \node[group] (v_group) at (3,-2.5) {};

    \node[vertex, label=above right:{$u_1$}] (u1) at (0.9,-1.6) {};
    \node[vertex, label=above right:{$u_2$}] (u2) at (1,-2.2) {};
    \node at (1,-2.7) {$\vdots$};
    \node[vertex, label=above right:{$u_n$}] (un) at (1,-3.4) {};

    \node[vertex, label=above left:{$v_1$}] (v1) at (3.1,-1.6) {};
    \node[vertex, label=above left:{$v_2$}] (v2) at (3,-2.2) {};
    \node at (3,-2.7) {$\vdots$};
    \node[vertex, label=above left:{$v_n$}] (vn) at (3,-3.4) {};

    \foreach \n in {u1, u2, un} \draw (x) -- (\n);
    \foreach \n in {v1, v2, vn} \draw (y) -- (\n);
    
    \draw (u1) -- (v1);
    \draw (u2) -- (v2);
    \draw (un) -- (vn);

\end{tikzpicture}
\end{center}
    \caption{Neighborhood structures of $x$ and $y$ in Lemma \ref{lemma_cushing2025}.}
    \label{fig:cushinglemma20}
\end{figure}

\subsection{New results on the Lin-Lu-Yau Ricci curvature of arcs of digraphs} \label{subsect_newresults}

Using Theorem \ref{theo_curvature_regular}, we now characterize arcs of strongly connected digraphs with zero Ricci curvature.

\begin{theorem}
\label{theo_arc_zerocurvature}
    Assuming the hypotheses of Theorem \ref{theo_curvature_regular}. Suppose further that $N^{\text{out}}(x) \cap N^{\text{out}}(y) = \emptyset$. Then, the following statements are equivalent
    \begin{enumerate}
        \item $\kappa(x,y)=0$.

        \item 
        \begin{enumerate}
            \item For case $x\notin N^{\text{out}}(y)$, there exists a bijection from $N^{\text{out}}(x)$ to $N^{\text{out}}(y)$ such that $d(v,w(v))=1$ holds for every $v\in N^{\text{out}}(x)$ with $d^{\text{out}}_x \geq 1$.

        \item For case $x\in N^{\text{out}}(y)$, there exists a bijection from $N^{\text{out}}(x)$ to $N^{\text{out}}(y)$ mapping $y$ to $x$ such that there exists either $u_0 \in N^{\text{out}}(x) -\{y\} $ with $d(u_0, w(u_0))=3$ and $d(v,w(v))=1$ for any other $v\in N^{\text{out}}(x)-\{u_0,y\}$ with $d^{\text{out}}_x \geq 2$, or $u_0,v_0 \in N^{\text{out}}(x) -\{y\}$ with $d(u_0, w(u_0))=d(v_0, w(v_0))=2$ and $d(v,w(v))=1$ for any other $v\in N^{\text{out}}(x)-\{u_0,v_0,y\}$ with $d^{\text{out}}_x \geq 3$.
        \end{enumerate}
    \end{enumerate}
\end{theorem}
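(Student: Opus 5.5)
The plan is to reduce everything to the explicit formula of Theorem \ref{theo_curvature_regular}, with $w$ a perfect distance matching in the sense of Definition \ref{def_perfectdistancematching}. Setting $\kappa(x,y)=0$ in \eqref{eq_lin-lu-yau_curvature} becomes a purely combinatorial condition on the minimal matching cost: $\sum_{v\in N^{\text{out}}(x)} d(v,w(v))=r$ when $x\notin N^{\text{out}}(y)$, and $\sum_{v\in N^{\text{out}}(x)-\{y\}} d(v,w(v))=r+1$ when $x\in N^{\text{out}}(y)$. The equivalence will follow by comparing this sum with lower bounds forced by disjointness of the out-neighbourhoods. Each lower bound is then attained only by the distance patterns listed in (2).

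For case (a), disjointness gives $d(v,w(v))\geq 1$ for every $v$, since $v\neq w(v)$. Hence the sum is at least $r$, with equality if and only if every term equals $1$. For (1)$\Rightarrow$(2), I take $w$ itself as the required bijection. For (2)$\Rightarrow$(1), a bijection with all distances $1$ has cost $r$, so the minimal cost is at most $r$. It is also at least $r$ by the bound above, so the perfect distance matching has cost exactly $r$ and $\kappa(x,y)=0$. This is consistent with Corollary \ref{corollary_nonpositivecurvature}.

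For case (b), I restrict to bijections sending $y\mapsto x$, as the theorem prescribes. The remaining $r-1$ vertices $v\in N^{\text{out}}(x)-\{y\}$ satisfy $d(v,w(v))\geq 1$. The sum is $r+1$ precisely when the excess $\sum (d(v,w(v))-1)$ equals $2$. The only ways to distribute an excess of $2$ over positive integers are one term equal to $3$, or two terms equal to $2$, with all others equal to $1$; these are exactly the two alternatives in (b). For (1)$\Rightarrow$(2), I apply this to the optimal $w$.

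The converse in case (b) is the main obstacle. A bijection with the stated pattern only shows that the minimal cost is at most $r+1$. A priori a cheaper matching could exist, with excess $0$ or $1$, which would give positive curvature. So the converse needs an argument that, in this configuration, the excess cannot be less than $2$. I would first look for a parity or triangle-type lower bound using $d(x,y)=d(y,x)=1$, in the spirit of the Extended Matching conditions where distances $2$ and $3$ arise. If no such general bound holds, I would read the statement of (b) as referring to a perfect distance matching, that is, a bijection minimizing the sum. Then the cost is exactly $r+1$, and Theorem \ref{theo_curvature_regular} gives $\kappa(x,y)=1-\frac{1}{r}(r+1-1)=0$ directly. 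I expect to adopt this reading, and I will note it explicitly. The degree conditions $d^{\text{out}}_x\geq 1,2,3$ only ensure that enough vertices exist for each pattern.
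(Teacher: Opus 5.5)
Your argument for (1)$\Rightarrow$(2) is the paper's argument. You set $\kappa(x,y)=0$ in Theorem~\ref{theo_curvature_regular}, use $d(v,w(v))\geq 1$ from disjointness, and distribute the excess: $0$ in case (a), $2$ in case (b). For (2)(a)$\Rightarrow$(1) you give a self-contained argument: an all-distance-one bijection has cost $r$, and $r$ is also a lower bound. The paper instead just cites \cite[Theorem 4.4]{yamada2019Riccidirected}; both routes are fine.

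On (2)(b)$\Rightarrow$(1) your suspicion is correct, and the search for a general lower bound on the excess will fail. Here is a counterexample.
\begin{itemize}
\item Take vertices $x,y,v_2,v_3,w_2,w_3$ and arcs $x\to y$, $x\to v_2$, $x\to v_3$, $y\to x$, $y\to w_2$, $y\to w_3$, $v_2\to w_3$, $v_3\to w_2$, $w_2\to w_3$, $w_3\to w_2$, $w_2\to x$.
\item This is a strongly connected simple digraph with $r=3$, $x\in N^{\text{out}}(y)$ and $N^{\text{out}}(x)\cap N^{\text{out}}(y)=\emptyset$.
\item The bijection $y\mapsto x$, $v_2\mapsto w_2$, $v_3\mapsto w_3$ has $d(v_2,w_2)=d(v_3,w_3)=2$, so it realizes the second pattern in (2)(b).
\item Yet the coupling $A(x,y)=\alpha-\frac{1-\alpha}{3}$, $A(x,x)=A(y,y)=A(v_2,w_3)=A(v_3,w_2)=\frac{1-\alpha}{3}$ has cost $\alpha+\frac{1-\alpha}{3}$.
\item This gives $\kappa(x,y)\geq\frac{2}{3}>0$, and in fact $\kappa(x,y)=\frac{2}{3}$ by Theorem~\ref{theo_curvature_regular}.
\end{itemize}

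So the literal ``there exists a bijection'' version of (2)(b) does not imply (1). The implication holds only when the bijection is a perfect distance matching, i.e.\ the minimizing $w$ of Theorem~\ref{theo_curvature_regular}. That is the reading you propose. It is also what the paper's one-line ``can be checked by direct computation'' silently relies on, since the notation $w(v)$ there denotes the optimal pairing. Under that reading your proof is complete and agrees with the paper's. You should adopt the reading outright and state it, rather than keeping it as a fallback.
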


\begin{proof}
    The implication $(2)(a)\rightarrow(1)$ is proved in  \cite[Theorem 4.4]{yamada2019Riccidirected}. $(2)(b)\rightarrow(1)$ can be checked by direct computation. We show that $(1)$ implies $(2)$.

    Suppose $k(x,y)=0$. For case $x\notin N^{\text{out}}(y)$, By Theorem \ref{theo_curvature_regular}, it follows that
    \[
    \sum_{v\in N^{\text{out}}(x)} d(v,w(v)) = r.
    \]
    Since $N^{\text{out}}(x) \cap N^{\text{out}}(y) = \emptyset$ and $d(v,w(v))\geq1$ for every $v\in N^{\text{out}}(x)$, this forces $d(v,w(v))=1$, which is $(2)(a)$ (See Figure \ref{fig:k=0, case (a)}).

    For the case $x\in N^{\text{out}}(y)$, again by Theorem \ref{theo_curvature_regular}, we have
    \[
    \sum_{v\in N^{\text{out}}(x)-\{y\} } d(v,w(v)) = r+1.
    \]
    Observe that $|N^{\text{out}}(x)-\{y\}|=r-1$ and $ d(v,w(v)) \geq 1$. Since $(r-1)+2=r+1$, either there exists a vertex $u_0$ such that $d(u_0,w(u_0))=3$ and $d(v,w(v))=1$ for any other $v\in N^{\text{out}}(x)-\{u_0,y\}$ with $d^{\text{out}}_x \geq 2$, or there exists $u_0,v_0 \in N^{\text{out}}(x) -\{y\}$ such that $d(u_0, w(u_0))=d(v_0, w(v_0))=2$ and $d(v,w(v))=1$ for any other $v\in N^{\text{out}}(x)-\{u_0,v_0,y\}$ with $d^{\text{out}}_x \geq 3$ (See Figure \ref{fig:k=0, case (b)}). This completes the proof.

\end{proof}

\begin{figure}[htp!]
\begin{center}
\begin{tikzpicture}[scale=1.7, 
    vertex/.style={circle, fill=black, inner sep=1.2pt},
    directed/.style={
        black, thick, 
        shorten >=2pt, shorten <=2pt,
        postaction={decorate, decoration={markings, mark=at position 0.55 with {\arrow{Stealth[scale=1.2]}}}}
    }]

    \node[vertex, label=below left:{$x$}] (x) at (0,0) {};
    \node[vertex, label=below right:{$y$}] (y) at (4,0) {};

    \draw[directed] (x) -- (y);

    \node[vertex, label=above:{$v_1$}] (v1) at (1.2,0.5) {};
    \node[vertex, label=above:{$w_1$}] (w1) at (2.8,0.5) {};
    \draw[directed] (x) -- (v1);
    \draw[directed] (v1) -- (w1);
    \draw[directed] (w1) -- (y);

    \node[black, label={[rotate=20]above:$\vdots$}] at (1.0,1.0) {};
    \node[black, label={[rotate=-20]above:$\vdots$}] at (3.0,1.0) {};
    
    \node[vertex, label=above left:{$v_r$}] (vr) at (0.8,1.8) {};
    \node[vertex, label=above right:{$w_r$}] (wr) at (3.2,1.8) {};
    \draw[directed] (x) -- (vr);
    \draw[directed] (vr) -- (wr);
    \draw[directed] (wr) -- (y);

\end{tikzpicture}
\end{center}
    \caption{Case (a): $x \notin N^{out}(y)$ with $r\geq 1$.}
    \label{fig:k=0, case (a)}
\end{figure}


\begin{figure}[htbp]
    \centering
    \begin{minipage}{0.45\textwidth}
        \centering
    \begin{tikzpicture}[scale=1.8, 
    vertex/.style={circle, fill=black, inner sep=1.2pt},
    directed/.style={
        black, thick, 
        shorten >=2pt, shorten <=2pt,
        postaction={decorate, decoration={markings, mark=at position 0.55 with {\arrow{Stealth[scale=1.2]}}}}
    }]

    \node[vertex, label=below left:{$x$}] (x) at (0,0) {};
    \node[vertex, label=below right:{$y$}] (y) at (4,0) {};

    \draw[directed] (x) to[bend left=10] (y);
    \draw[directed] (y) to[bend left=10] (x);

    \node[vertex, label=above :{$v_2$}] (v2) at (1,0.5) {};
    \node[vertex, label=above:{$u_{21}$}] (u21) at (1.6,0.5) {};
    \node[vertex, label=above:{$u_{22}$}] (u22) at (2.3,0.5) {};
    \node[vertex, label=above :{$w_2$}] (w2) at (3,0.5) {};
    
    \draw[directed] (x) -- (v2);
    \draw[directed] (v2) -- (u21);
    \draw[directed] (u21) -- (u22);
    \draw[directed] (u22) -- (w2);
    \draw[directed] (w2) -- (y);

    \node[vertex, label= left:{$v_3$}] (v3) at (1.2,1.2) {};
    \node[vertex, label= right:{$w_3$}] (w3) at (2.8,1.2) {};
    \draw[directed] (x) -- (v3);
    \draw[directed] (v3) -- (w3);
    \draw[directed] (w3) -- (y);

    \node[black, label={[rotate=20]above:$\vdots$}] at (1.1,1.3) {};
    \node[black, label={[rotate=-20]above:$\vdots$}] at (2.9,1.3) {};

    \node[vertex, label=above left:{$v_r$}] (vr) at (0.8,2.0) {};
    \node[vertex, label=above right:{$w_r$}] (wr) at (3.2,2.0) {};
    \draw[directed] (x) -- (vr);
    \draw[directed] (vr) -- (wr);
    \draw[directed] (wr) -- (y);

    \node at (2, -1.0) {$r \geq 2$};
\end{tikzpicture}
    \end{minipage}
    \hfill 
    \begin{minipage}{0.45\textwidth}
        \centering
   \begin{tikzpicture}[scale=1.8, 
    vertex/.style={circle, fill=black, inner sep=1.2pt},
    directed/.style={
        black, thick, 
        shorten >=2pt, shorten <=2pt,
        postaction={decorate, decoration={markings, mark=at position 0.55 with {\arrow{Stealth[scale=1.2]}}}}
    }]

    \node[vertex, label=below left:{$x$}] (x) at (0,0) {};
    \node[vertex, label=below right:{$y$}] (y) at (4,0) {};

    \draw[directed] (x) to[bend left=10] (y);
    \draw[directed] (y) to[bend left=10] (x);

    \node[vertex, label=above:{$v_1$}] (v1) at (1.2,0.4) {};
    \node[vertex, label=above:{$u_1$}] (u1) at (2,0.4) {};
    \node[vertex, label=above:{$w_1$}] (w1) at (2.8,0.4) {};
    \draw[directed] (x) -- (v1);
    \draw[directed] (v1) -- (u1);
    \draw[directed] (u1) -- (w1);
    \draw[directed] (y) -- (w1);

    \node[vertex, label=above :{$v_2$}] (v2) at (1.1,0.9) {};
    \node[vertex, label=above:{$u_2$}] (u2) at (2,0.9) {};
    \node[vertex, label=above:{$w_2$}] (w2) at (2.9,0.9) {};
    \draw[directed] (x) -- (v2);
    \draw[directed] (v2) -- (u2);
    \draw[directed] (u2) -- (w2);
    \draw[directed] (y) -- (w2);

    \node[vertex, label=left:{$v_3$}] (v3) at (1.0,1.4) {};
    \node[vertex, label=right:{$w_3$}] (w3) at (3.0,1.4) {};
    \draw[directed] (x) -- (v3);
    \draw[directed] (v3) -- (w3);
    \draw[directed] (y) -- (w3);

    \node[black, label={[rotate=20]above:$\vdots$}] at (1.0,1.5) {};
    \node[black, label={[rotate=-20]above:$\vdots$}] at (3.0,1.5) {};

    \node[vertex, label=above left:{$v_r$}] (vr) at (0.8,2.2) {};
    \node[vertex, label=above right:{$w_r$}] (wr) at (3.2,2.2) {};
    \draw[directed] (x) -- (vr);
    \draw[directed] (vr) -- (wr);
    \draw[directed] (y) -- (wr);

    \node at (2, -1.0) {$r \geq 3$};
\end{tikzpicture}
    \end{minipage}
    \caption{Case (b):  $x \in N^{out}(y)$.}
    \label{fig:k=0, case (b)}
\end{figure}


By utilizing Definition \ref{def_perfectdistancematching} and Theorem \ref{theo_curvature_regular},  we establish a sufficient condition for a \textit{monotonicity} result of directed Cayley graphs: for when its Lin-Lu-Yau Ricci curvature increases.

\begin{theorem}
\label{theo_increasecurv_inverse}
    Let $G$ be a finitely generated group with $S=\{s_1,s_2, \ldots, s_r\}$ being a set of generators satisfying certain relation $R$. Suppose $s_k \in S$ satisfies $s_k^{-1} \notin S$ and cannot be expressed as $s_i s_j^{-1}$ for every $s_i,s_j \in S$. If $k(e,s_k)$ (resp. $k'(e,s_k)$) is the Lin-Lu-Yau Ricci curvature of arc $(e,s_k)$  in $\Gamma (G,S)$ (resp. $\Gamma (G,S')$ with $S'=S\cup \{s_k^{-1} \}$), then $k'(e,s_k) > k(e,s_k).$
\end{theorem}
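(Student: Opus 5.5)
The plan is to compute both curvatures with Theorem \ref{theo_curvature_regular} and compare the two optimal matching costs.

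\textbf{Which case of the theorem applies.} In $\Gamma(G,S)$ we have $N^{\text{out}}(e)=S$ and $N^{\text{out}}(s_k)=s_kS$, both of size $r$.
\begin{itemize}
\item Since $s_k^{-1}\notin S$, we get $e\notin s_kS$, so case (a) applies.
\item The hypothesis $s_k\neq s_is_j^{-1}$ gives $s_ks_j\neq s_i$ for all $i,j$, so $S\cap s_kS=\emptyset$.
\end{itemize}
Let $F:S\to s_kS$ be a perfect distance matching (Definition \ref{def_perfectdistancematching}) with cost $C=\sum_i d(s_i,F(s_i))$. Then $k(e,s_k)=1-C/r$. By disjointness every term is at least $1$, so $C\ge r$.

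In $\Gamma(G,S')$ the common out-degree is $r+1$, and $e=s_ks_k^{-1}\in N^{\text{out}}(s_k)$, so case (b) applies. If $C'$ denotes the optimal cost of a bijection $S'-\{s_k\}\to s_kS'-\{e\}=s_kS$, then $k'(e,s_k)=1-(C'-1)/(r+1)$. A short computation shows that
\[
k'(e,s_k)>k(e,s_k)\iff rC'<(r+1)C+r .
\]

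\textbf{Competitor bijection.} We bound $C'$ from above by exhibiting a bijection $F'$ on $S'-\{s_k\}=(S-\{s_k\})\cup\{s_k^{-1}\}$:
\[
F'(s_i)=F(s_i)\ (i\neq k),\qquad F'(s_k^{-1})=F(s_k).
\]
Adding generators only shortens distances, so $d'\le d$. Write $F(s_k)=s_ks_j$; note that $d(s_k,s_ks_j)=1$. The path $s_k^{-1}\to e\to s_k\to s_ks_j$ gives $d'(s_k^{-1},s_ks_j)\le 3$. Hence
\[
C'\le C-d(s_k,F(s_k))+3\le C+2 .
\]
Plugging in, the target inequality $rC'<(r+1)C+r$ holds whenever $C>r$, that is, whenever $k(e,s_k)<0$.

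\textbf{Main obstacle: the boundary case $C=r$.} This is the case $k(e,s_k)=0$, where every $s_i$ is matched at distance $1$. The crude bound then only yields $k'\ge k$. To get strictness we need $C'\le C+1$. I would try three improvements, in this order:
\begin{enumerate}
\item Choose $s_j$ so that $s_ks_j\in S'$ or $s_ks_j\in s_k^{-1}S'$. Then $d'(s_k^{-1},s_ks_j)\le 2$, via $e\to s_ks_j$ or $s_k^{-1}\to s_k^{-1}s\to\cdots$.
\item Exploit the new arcs labelled $s_k^{-1}$ to shorten at least one other term $d'(s_i,F(s_i))$ below $d(s_i,F(s_i))$.
\item Re-route $F$ through an alternating exchange, swapping targets between $s_k^{-1}$ and some $s_i$, so that the total rises by at most $1$.
\end{enumerate}
If none of these works in general, the argument reveals exactly which relation among the $s_i$ is needed. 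The statement's phrase "satisfying certain relation $R$" would then be read as supplying that extra condition.
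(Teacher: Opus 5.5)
\textbf{Gap: the case $\kappa(e,s_k)=0$ is left open, and it cannot be closed.}

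Everything you actually prove is correct, and it is the paper's own argument. The paper uses the same competitor: its $g'$ sends $s_k^{-1}$ to $g(s_k)$. It obtains the same estimate $C'\le C+2$ from the triangle inequality through $d_{S'}(s_k^{-1},s_k)\le 2$. It uses the same observation $S\cap s_kS=\emptyset$, which gives $\kappa(e,s_k)\le 0$. This yields $\kappa'(e,s_k)\ge\kappa(e,s_k)$ always, and strict inequality whenever $\kappa(e,s_k)<0$.

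The unfinished part is the boundary case $C=r$. None of your three refinements can close it, because the strict inequality is false there.
\begin{itemize}
\item Take $G=\mathbb{Z}_n$ with $n\ge 6$ and $S=\{1\}$. The hypotheses hold: $-1\notin S$, and $s_is_j^{-1}$ can only be $0\neq 1$.
\item Lemma \ref{lemma_curvature_single_generator} gives $\kappa(e,1)=0$.
\item In $\Gamma(\mathbb{Z}_n,\{1,-1\})$, the $n$-cycle, case (b) of Theorem \ref{theo_curvature_regular} gives $\kappa'(e,1)=1-\tfrac12\bigl(d(-1,2)-1\bigr)=1-\tfrac12(3-1)=0$.
\end{itemize}
Your competitor bound is attained here, with $C'=3=C+2$. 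Your refinements are vacuous in this example. There are no other generators to exploit, and $2\notin S'\cup(-1+S')$.

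This is not an artifact of $r=1$. Take $S=\{1,2\}$, $s_k=2$ and $n\ge 11$. Then $C=d(1,3)+d(2,4)=2=r$, and the optimal case-(b) matching gives $C'=d'(1,3)+d'(-2,4)=1+3=4=C+2$. So again $\kappa(e,2)=\kappa'(e,2)=0$. One can confirm $W=1$ directly with the $1$-Lipschitz function $f(-2)=2$, $f(0)=f(1)=1$, $f(2)=f(3)=0$, $f(4)=-1$. Both examples also appear in the paper's own appendix tables.

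The paper's proof has exactly the same hole, only hidden. From $(r+1)(1-\kappa')\le r(1-\kappa)+1$ it writes ``equivalently $\kappa'>\frac{r}{r+1}\kappa$''. Only $\kappa'\ge\frac{r}{r+1}\kappa$ follows, and this exceeds $\kappa$ only when $\kappa<0$.

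Reading ``certain relation $R$'' as an unspecified extra hypothesis does not repair this. $R$ is just the presentation of $G$, and in the examples above it changes nothing.

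The right outcome of your analysis is a corrected statement. One always has $\kappa'(e,s_k)\ge\kappa(e,s_k)$. Strict inequality holds whenever $\kappa(e,s_k)<0$. When $\kappa(e,s_k)=0$, strict inequality holds exactly when $C'\le r+1$, by your own criterion $rC'<(r+1)C+r$.
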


\begin{proof}
    Without loss of generality, we assume $s_k=s_1$. Consider $\kappa'(e,s)$ in $\Gamma (G,S')$ first. Note that $N^{\text{out}}_{S'} (e)=\{s_1, s_1^{-1}, s_2, \ldots s_r\}$ and $N^{\text{out}}_{S'} (s_1)=\{e, s_1^{2}, s_2, \ldots s_r\}$. Let $f$ be a perfect distance matching from $N^{\text{out}}_{S'} (e)-\{s_1\}$ to $N^{\text{out}}_{S'} (s_1)-\{e\}$.

    Meanwhile, consider $\kappa(e,s)$ in $\Gamma(G,S)$. Let $g$ be a perfect distance matching from $N^{\text{out}}_S (e)=\{s_1, s_2, \ldots,s_r \}$ to $N^{\text{out}}_S(s_1)=\{s_1^2, s_1s_2, \ldots,s_1s_r\}$. Define a bijection  $g' : N^{\text{out}}_{S'} (e)-\{s_1\} \rightarrow N^{\text{out}}_{S'} (s_1)-\{e\}$ by $g'(v)=g(v)$ for $v \neq s_1^{-1}$ and $g'(s_1^{-1})=g(a)$. Since $f$ is a perfect distance matching, we must have 
    \begin{align*}
        \sum_{v \in N^{\text{out}}_{S'} (e) -\{s_1\} } d_{S'}(v,f(v)) &\leq \sum_{v \in N^{\text{out}}_{S'} (e) -\{s_1\} } d_{S'}(v,g'(v)) \\
        &= \sum_{v \in N^{\text{out}}_{S'} (e) -\{s_1, s^{-1}\} } d_{S'}(v,g'(v)) + d_{S'}(s_1^{-1}, g'(s_1^{-1}) ) \\
        &\leq \sum_{v \in N^{\text{out}}_{S} (e) } d_{S}(v,g(v)) -d_{S'}(s_1, g(s_1)) + d_{S'}(s_1^{-1}, g'(s_1^{-1}) ) \\
        &\leq \sum_{v \in N^{\text{out}}_{S} (e) } d_{S}(v,g(v)) + d_{S'}(s_1^{-1}, s_1) \\
        &= \sum_{v \in N^{\text{out}}_{S} (e) } d_{S}(v,g(v)) + 2.
    \end{align*}

Since $e \in  N^{\text{out}}_{S'} (s_1)$ and $e \notin  N^{\text{out}}_{S} (s_1)$, by Theorem \ref{theo_curvature_regular}
\begin{equation*}
    \kappa' (e,s_1) = 1-\frac{1}{r+1} \left(\sum_{v \in N^{\text{out}}_{S'} (e) -\{s_1\} } d_{S'}(v,f(v)) -1 \right),
\end{equation*}
and 
\begin{equation*}
    \kappa (e,s_1) = 1-\frac{1}{r} \left(\sum_{v \in N^{\text{out}}_{S} (e) } d_{S}(v,g(v)) \right).
\end{equation*}

It follows that 
\[
(r+1)(1-\kappa'(e,s_1)) \leq r(1-\kappa (e,s_1))+1 ,
\]
or equivalently,
\[
\kappa'(e,s_1) > \frac{r}{r+1} \kappa (e,s_1).
\]
The assumption that $s_1$ is not expressible in the form of $s_is_j^{-1}$ implies that $N^{\text{out}}_S (e) \cap N^{\text{out}}_S(s_1)=\emptyset$. By Corollary \ref{corollary_nonpositivecurvature}, we have $\kappa (e,s_1)\leq 0$. Hence, $\kappa' (e,s_1)> \kappa (e,s_1) - \frac{1}{r+1}\kappa (e,s_1) \geq  \kappa (e,s_1)$. The proof is now complete.
\end{proof}

\begin{remark}
    Theorem \ref{theo_increasecurv_inverse} can be used to explain the pattern in Propositions \ref{prop_ricci_quartenion_{a,b}}, \ref{prop_ricci_quartenion_3_a^{-1}}, \ref{prop_ricci_quartenion_3_b^{-1}} in Sect. \ref{sect_computation}. Observe that $\kappa(e,a)$ increases from $\Gamma(Q_{4m}, \{a,b\})$ to $\Gamma(Q_{4m}, \{a,a^{-1},b\})$. Similarly, $\kappa(e,b)$ increases from $\Gamma(Q_{4m}, \{a,b\})$ to $\Gamma(Q_{4m}, \{a,b,b^{-1}\})$. 
\end{remark}

By a similar argument in Theorem \ref{theo_increasecurv_inverse}, one obtains: 
\begin{theorem}
\label{theo_curvature_add_gen}
    Let $G$ be a finitely generated group with $S=\{s_1,s_2, \ldots, s_r\}$ being a set of generators satisfying certain relation $R$. Suppose $s_{r+1} \notin S$ is another generator of $G$. For any $i=1,\ldots,r$, let $k(e,s_i)$ (resp. $k'(e,s_i)$) be the Lin-Lu-Yau Ricci curvature of arc $(e,s_i)$ in $\Gamma (G,S)$ (resp.  $\Gamma (G,S')$ with $S'=S\cup \{s_{r+1} \}$). 
    Suppose 
    \[
    \kappa (e,s_i) \leq 1-d_{S'}(s_{r+1},s_is_{r+1}),
    \]
    then $k'(e,s_i) \geq k(e,s_i).$ 
\end{theorem}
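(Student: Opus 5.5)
Following the proof of Theorem \ref{theo_increasecurv_inverse}, I will compare a perfect distance matching in $\Gamma(G,S')$ with one obtained by extending a perfect distance matching of $\Gamma(G,S)$, and then apply Theorem \ref{theo_curvature_regular} on both sides. In $\Gamma(G,S')$ we have $N^{\text{out}}_{S'}(e)=\{s_1,\ldots,s_r,s_{r+1}\}$ and $N^{\text{out}}_{S'}(s_i)=\{s_is_1,\ldots,s_is_r,s_is_{r+1}\}$, both of size $r+1$. Let $g$ be a perfect distance matching used for $\kappa(e,s_i)$ in $\Gamma(G,S)$: either from $N^{\text{out}}_S(e)$ to $N^{\text{out}}_S(s_i)$, or, if $e\in N^{\text{out}}_S(s_i)$, with $g(s_i)=e$. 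I will extend $g$ to a bijection $g'$ between the $S'$-neighbourhoods by setting $g'(s_{r+1})=s_is_{r+1}$.

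First I record the monotonicity of distances. Since $S\subseteq S'$, every directed path in $\Gamma(G,S)$ is also a path in $\Gamma(G,S')$, so $d_{S'}\le d_S$. Adding a generator never removes arcs, so if $e\in N^{\text{out}}_S(s_i)$ then $e\in N^{\text{out}}_{S'}(s_i)$. If $e\notin N^{\text{out}}_S(s_i)$ but $e\in N^{\text{out}}_{S'}(s_i)$, that is, $s_is_{r+1}=e$, then $s_{r+1}=s_i^{-1}$, and this case should be handled separately as in Theorem \ref{theo_increasecurv_inverse}. In that case I take $g'(s_i)=e$ and $g'(s_{r+1})=g(s_i)$. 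The extra term in the cost is then at most $d_{S'}(s_i^{-1},s_i)+\cdots$, and the hypothesis bounds it with $d_{S'}(s_{r+1},s_is_{r+1})=d_{S'}(s_i^{-1},e)=1$.

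In the generic case the case type of Theorem \ref{theo_curvature_regular} is the same in both graphs. Write $\Sigma=\sum d_S(v,g(v))$, summed over the relevant set, and let $\delta=1$ if $e\in N^{\text{out}}(s_i)$ and $\delta=0$ otherwise. Then $r(1-\kappa)=\Sigma-\delta$. Because $f$ is a perfect distance matching for $S'$, and using $d_{S'}\le d_S$,
\[
(r+1)(1-\kappa')\le \Sigma-\delta+d_{S'}(s_{r+1},s_is_{r+1})=r(1-\kappa)+D,
\]
where $D=d_{S'}(s_{r+1},s_is_{r+1})$. Hence $\kappa'\ge 1-\frac{r(1-\kappa)+D}{r+1}$. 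Showing $\kappa'\ge\kappa$ is then equivalent to $r(1-\kappa)+D\le (r+1)(1-\kappa)$, that is, $D\le 1-\kappa$. This is exactly the hypothesis $\kappa\le 1-D$.

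The main obstacle is the bookkeeping across the case split of Theorem \ref{theo_curvature_regular}. The delicate case is when $s_{r+1}=s_i^{-1}$, so that the type switches from (a) to (b). Here the hypothesis reads $\kappa\le 0$. I plan to rerun the estimate from the proof of Theorem \ref{theo_increasecurv_inverse} to obtain $(r+1)(1-\kappa')\le r(1-\kappa)+1$, which again gives $\kappa'\ge\kappa$ whenever $1\le 1-\kappa$. I also need to check that common out-neighbours, which are fixed by the lemma, are preserved by the extension, and that $s_is_{r+1}$ is distinct from the existing out-neighbours. If $s_is_{r+1}$ coincides with some $s_is_j$, the graph is non-simple, which the setup excludes.
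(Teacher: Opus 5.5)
Your proposal is correct and is the argument the paper intends: the paper only says the result follows ``by a similar argument'' to Theorem \ref{theo_increasecurv_inverse}. Your version extends a perfect distance matching by $s_{r+1}\mapsto s_is_{r+1}$, uses $d_{S'}\le d_S$ to get $(r+1)(1-\kappa')\le r(1-\kappa)+D$, and notes that this yields $\kappa'\ge\kappa$ exactly under the hypothesis $D\le 1-\kappa$.

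Your separate treatment of $s_{r+1}=s_i^{-1}$ is a detail the paper leaves implicit, and it works. There the extra cost $d_{S'}(s_i^{-1},g(s_i))\le 2+d_S(s_i,g(s_i))$ is offset by the $-1$ of case (b), which again gives the bound with $D=1$. One small correction: $s_is_{r+1}\notin N^{\text{out}}_S(s_i)$ follows directly from $s_{r+1}\notin S$, not from simplicity.
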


\begin{lemma}
\label{lemma_curvature_single_generator}
    Given a finitely generated group $G$ and a generating set $S=\{a\}$, where $a$ is one of the generators. Then, 
    \begin{align*}
    \kappa(e,a)
    =\begin{cases}
        0 & \text{ if $|a|\neq 2$ }\\
        2 & \text{ if $|a|= 2$. }
    \end{cases}
    \end{align*}
\end{lemma}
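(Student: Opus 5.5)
The plan is to apply Theorem \ref{theo_curvature_regular} directly. The Cayley digraph $\Gamma(G,\{a\})$ is $1$-regular: every vertex $g$ has exactly one out-neighbour, namely $ga$. The arc $(e,a)$ therefore satisfies $d^{\text{out}}_e=d^{\text{out}}_a=r=1$, with
\[
N^{\text{out}}(e)=\{a\}, \qquad N^{\text{out}}(a)=\{a^2\}.
\]
The bijection $w$ between the two out-neighbourhoods is then forced, so no minimisation over matchings is needed. The proof reduces to deciding which case of \eqref{eq_lin-lu-yau_curvature} applies, which depends only on whether $e\in N^{\text{out}}(a)$, i.e.\ whether $a^2=e$. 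Since $S\subseteq G-\{e\}$, we have $a\neq e$, so $|a|\geq 2$ and the split $|a|=2$ versus $|a|\neq 2$ is exhaustive.

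\emph{Case $|a|\neq 2$.} Here $a^2\neq e$, so $e\notin N^{\text{out}}(a)$ and the first case of Theorem \ref{theo_curvature_regular} applies with $w(a)=a^2$. I will check that $d(a,a^2)=1$: the arc $(a,a^2)$ exists, and $a\neq a^2$ because $a\neq e$. This gives
\[
\kappa(e,a)=1-d(a,a^2)=0 .
\]

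\emph{Case $|a|=2$.} Here $a^2=e$, so $N^{\text{out}}(a)=\{e\}$ and the second case applies, with $w(a)=e$ forced as in the theorem. The sum over $N^{\text{out}}(e)-\{a\}=\emptyset$ is $0$, so
\[
\kappa(e,a)=1-(0-1)=2 .
\]
The graph is simple here: it is the $2$-cycle $e\to a\to e$, whose arcs $(e,a)$ and $(a,e)$ are distinct.

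The only point needing care is the standing hypothesis that the digraph is strongly connected. When $|a|$ is finite, $\Gamma(G,\{a\})$ is a directed $|a|$-cycle, so this holds. If $a$ had infinite order, the graph would be a one-way infinite path, which is not strongly connected. In that situation I would remark that the proof of Theorem \ref{theo_curvature_regular} uses only the finitely many distances among $\{e,a,a^2\}$, all of which are finite in the forward direction. Concretely, the coupling $A(e,a)=\alpha$, $A(a,a^2)=1-\alpha$ costs exactly $1$. Matching it from below needs a $1$-Lipschitz $f$ on $\Z$ with $f(e)-f(a)=1$ and $f(a)-f(a^2)=1$; the potential $f(a^k)=-k$ works, so the value $0$ persists. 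Otherwise the lemma is an immediate specialization of the theorem.
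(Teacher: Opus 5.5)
Your proposal is correct and follows the paper's proof: both apply Theorem \ref{theo_curvature_regular} with $r=1$, $N^{\text{out}}(e)=\{a\}$ and $N^{\text{out}}(a)=\{a^2\}$, giving $1-d(a,a^2)=0$ when $a^2\neq e$ and $1-(0-1)=2$ when $a^2=e$. Your extra treatment of the infinite-order case is also valid. There $\Gamma(G,\{a\})$ is not strongly connected, so the theorem does not formally apply, and the paper passes over this silently; your explicit coupling together with the potential $f(a^k)=-k$ settles it directly.
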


\begin{proof}
    Suppose first that $|a|\neq 2$. Note that $N^{\text{out}}(e)=\{a\}$ and $N^{\text{out}}(a)=\{a^2\}$. Since $d(a,a^2)=1$, we have $\kappa(e,a)=1-\frac{1}{1}(1)=0$. 

    Now if $|a|=2$, then $N^{\text{out}}(e)=\{a\}$ and $N^{\text{out}}(a)=\{a^2=e\}$. It follows that $\kappa(e,a)=1-\frac{1}{1}(0-1)=2$.
\end{proof}

\section{Computation of Ricci Curvature of Cayley Graphs} \label{sect_computation}
In this section, we compute the Lin--Lu--Yau Ricci curvature of several directed Cayley graphs. We first consider the directed Cayley graph $\Gamma(D_n,\{a,b\})$ of the dihedral group $D_n$ for $n\geq 3$, where we explicitly demonstrate the construction given in Theorem \ref{theo_curvature_regular}. We then compute the Ricci curvature of $\Gamma(Q_{4m},\{a,b\})$, $\Gamma(Q_{4m},\{a,a^{-1},b\})$ and $\Gamma(Q_{4m},\{a,b,b^{-1}\})$ for the generalized quaternion group $Q_{4m}$, where $m\geq 2$.

For the generalized quaternion cases, we apply Theorem \ref{theo_curvature_regular} by first determining the directed distances between vertices in the out-neighborhood of the initial vertex and vertices in the out-neighborhood of the terminal vertex. These distances are then used to choose the pairings which minimize the total transport cost. The computations also illustrate the effect of enlarging the generating set. In particular, adding the inverse generator $a^{-1}$ or $b^{-1}$ increases the Lin-Lu-Yau Ricci curvature of the corresponding arc $(e,a)$ or $(e,b)$, in accordance with Theorem \ref{theo_increasecurv_inverse}.

Finally, we present an algorithm for computing the Lin-Lu-Yau Ricci curvature of Cayley graphs of finitely generated groups with prescribed generating sets. Several resulting curvature tables are included in the Appendix. 

Throughout this section, we write $W$ for $W(\mu_x^{\alpha},\mu_y^{\alpha})$ when the measures are clear from the context.

\begin{proposition}
\label{prop_ricci_dihedral}
    The Lin-Lu-Yau Ricci curvature for the directed Cayley graphs $\Gamma(D_n, S')$ with generating set $S'=\{a,b\}$ is 
\begin{center}
\begin{tabular}{ | c | c | c | } 
  \hline
  & \multicolumn{2}{c|}{$\Gamma(D_n, \{a,b\})$} \\
  \hline
  $\kappa(x,y)$  & $n=3$          & $n\geq 4$  \\ 
  \hline
  $\kappa(e,a) $ & $-\frac{1}{2}$ & $-1$ \\ 
  \hline
  $\kappa(e,b)$  & $\frac{1}{2}$  & $0$ \\ 
  \hline
\end{tabular}
\end{center}
\end{proposition}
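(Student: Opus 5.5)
Since left multiplication by any $g\in D_n$ is a digraph automorphism of $\Gamma(D_n,\{a,b\})$ sending $(e,s)$ to $(g,gs)$, every arc is equivalent to $(e,a)$ or $(e,b)$. So it suffices to compute these two curvatures. The digraph is $2$-regular, and we apply Theorem~\ref{theo_curvature_regular} directly. With out-degree $2$, there are only two bijections between the out-neighbourhoods in each case, so the perfect distance matching can be found by brute force once the relevant directed distances are known.

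For each distance I will use the normal form $a^i b^\epsilon$, $0\le i<n$, $\epsilon\in\{0,1\}$, together with the relation $ba^k=a^{-k}b$. Since $d(g,h)$ is the length of the shortest word $w$ in $\{a,b\}$ with $gw=h$, I will compute $g^{-1}h$. I will then list all words of length $\le 2$, namely $a,b,a^2,ab,ba,b^2=e$, and check which of them equal $g^{-1}h$. This gives the lower bound, and an explicit word gives the matching upper bound.

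\textbf{Arc $(e,a)$.} Here $N^{\text{out}}(e)=\{a,b\}$ and $N^{\text{out}}(a)=\{a^2,ab\}$. We have $e\notin N^{\text{out}}(a)$, since $a^2\neq e$ for $n\ge3$, so case (a) of \eqref{eq_lin-lu-yau_curvature} applies. The distances are as follows.
\begin{itemize}
\item $d(a,a^2)=1$ and $d(a,ab)=1$.
\item $d(b,ab)$ equals the word length of $bab=a^{-1}=a^{n-1}$. This is realized by the word $bab$ of length $3$, or by $a^2$ when $n=3$. No word of length $\le 1$ works, so the distance is $2$ for $n=3$ and $3$ for $n\ge4$.
\item $d(b,a^2)$ equals the word length of $ba^2$. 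This is realized by $baa$, or by $ab=a^{-2}b$ exactly when $n=3$, so again the distance is $2$ or $3$.
\end{itemize}
Both bijections therefore cost $3$ when $n=3$ and $4$ when $n\ge4$. This gives $\kappa(e,a)=1-\tfrac32=-\tfrac12$ and $\kappa(e,a)=1-2=-1$, respectively.

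\textbf{Arc $(e,b)$.} Here $N^{\text{out}}(b)=\{ba,e\}$ contains $e$, so case (b) applies with $w(b)=e$ and the remaining pair $a\mapsto ba$. The cost is $d(a,ba)$, the word length of $a^{-1}ba=ba^2$, which we have just seen is $2$ for $n=3$ and $3$ for $n\ge4$. Hence $\kappa(e,b)=1-\tfrac12(d-1)$, which equals $\tfrac12$ for $n=3$ and $0$ for $n\ge4$.

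The only delicate step is the lower bounds on these distances. We must rule out all short words, in particular coincidences such as $a^2=a^{-1}$ or $ab=a^{-2}b$, which occur only when $n=3$; this is exactly the source of the case split in the table. I will handle it by comparing normal forms: a word lies in the rotation coset or the reflection coset according to the parity of the number of $b$'s, and within a coset the exponent of $a$ determines the element.
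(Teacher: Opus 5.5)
Your proposal is correct and follows the paper's route. The paper likewise lists the out-neighbourhoods, obtains the same directed distances ($2$ for $n=3$, $3$ for $n\geq 4$), and reads off $\kappa(e,a)$ and $\kappa(e,b)$ from Theorem~\ref{theo_curvature_regular} (case (a) for $(e,a)$, case (b) for $(e,b)$); it additionally writes out the explicit coupling and a $1$-Lipschitz witness $f$ as a direct check. Your normal-form argument for the distance lower bounds is more careful than the paper's path-listing, and applying the theorem directly sidesteps a slip in the paper's witness for $\kappa(e,a)$ at $n=3$: there $f(b)$ should be $1$ rather than $2$ (since $d(b,a^2)=2$) and then $f(ba^2)=-1$, which leaves the bound $W\geq 1+\frac{1-\alpha}{2}$ unchanged.
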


\begin{proof}
   We compute $\kappa (e,a)$ first. For $n\geq 3$, we have $N^{\text{out}}(e)=\{a,b\}$ and $N^{\text{out}}(a)=\{a^2,ba^{n-1}\}$. Since $e\notin N^{\text{out}}(a)$, we apply case (a) in the proof of Theorem \ref{theo_curvature_regular}. The values of probability measures $\mu_{e}^{\alpha}$ and $\mu_{a}^{\alpha}$ are
\begin{center}
\begin{minipage}{0.62\textwidth}
\[
\begin{alignedat}{2}
\mu_e^\alpha(e) & = \alpha,
&\qquad
\mu_a^\alpha(a) & = \alpha, \\
\mu_e^\alpha(a) & = \frac{1-\alpha}{2},
&\qquad
\mu_a^\alpha(a^2) & = \frac{1-\alpha}{2}, \\
\mu_e^\alpha(b) & = \frac{1-\alpha}{2},
&\qquad
\mu_a^\alpha(ba^{n-1}) & = \frac{1-\alpha}{2}.
\end{alignedat}
\]
\end{minipage}
\end{center}

    Next, note that 
\begin{enumerate}[i.]
    \item $d(a,a^2)=d(a,ba^{n-1})=1$.

    \item $d(b,a^2) =
     \begin{cases}
       2, &\quad\text{if } n=3 \text{ with shortest path } (b,ba,bab=a^2) \\
       3, &\quad\text{if } n\geq 4 \text{ with shortest path } (b,bb=e,a,ab=ba^{n-1}) \\
     \end{cases} $

      \item $d(b,ba^{n-1}) =
     \begin{cases}
       2, &\quad\text{if } n=3 \text{ with shortest path } (b,ba,ba^2) \\
       3, &\quad\text{if } n\geq 4 \text{ with shortest path } (b,bb=e,a,ab=ba^{n-1}) \\
     \end{cases} $
\end{enumerate}

For every $n \geq 3$, we can choose the pair $(a,a^2)$ and $(b,ba^{n-1})$ as

\begin{equation*}
    d(a,a^2)+d(b,ba^{n-1})= 
    \begin{cases}
       3, &\quad\text{if } n=3 \\
       4, &\quad\text{if } n\geq 4 \\
    \end{cases}
\end{equation*}
is minimum.
Then, the coupling $A$ is defined as $A(e,a)=\a, A(a,a^2) =A(b,ba^{n-1})=\frac{1-\a}{2}$ and $0$ otherwise. By simple computation, we have 
\begin{equation*}
     \sum_{v,w\in \cV} A(v,w)d(v,w)= 
    \begin{cases}
       1+\dfrac{1-\a}{2} , &\quad\text{if } n=3, \\
       2-\a , &\quad\text{if } n\geq 4. 
    \end{cases}
\end{equation*}  
From Definition \ref{def_Wasserstein}, it follows that 
\begin{enumerate}[(1).]
    \item For $n=3$, we have $\frac{1-W}{1-\a} \geq -\frac{1}{2}$, giving $\kappa(e,a) \geq -\frac{1}{2}$ by taking limit as $\alpha$ approach one.

    \item For $n\geq 4$, we have $\frac{1-W}{1-\a} \geq -1$. So $\kappa (e,a) \geq -1$. 
\end{enumerate}

Now, we illustrate how to define the function $f$ for $n\geq3$. For $n=3$,  define the function $f:\cV \rightarrow \R$ as follows: (See Figure \ref{fig:k(e,a)forD_n})
    \begin{enumerate}[(1).]
        \item Let $f(e)=1, f(a)=0$ and $f(w(y))=f(a^2)=-1$.
        
        \item By our definition, we have 
        \[
        f(b)= \min \{ f(e)+d(b,e), f(a)+d(b,a), f(a^2)+d(b,a^2) \}  
        = \min \{ 2,  2, 2 \} 
        =2
        \]
       Then, $f(ba^2)=f(b)-d(b,ba^2)=2-2=0$.
    \end{enumerate}
In summary, we have
    \begin{equation*}
        f(v) = 
     \begin{cases}
       1 &\quad\text{if } v=e \\
       0, &\quad\text{if } v=a \\
       -1, &\quad\text{if } v=a^2 \\
       2, &\quad\text{if } v=b  \\
       0, &\quad\text{if } v=ba^2. \\
     \end{cases}
    \end{equation*}
    It follows that 
    \[
    W \geq (1-0)\alpha + \frac{1-\alpha}{2} [(0-(-1))+(2-0)]  
    = 1+ \frac{1-\alpha}{2}.
    \]
    Hence, $\kappa(e,a) \leq -\frac{1}{2}$ and therefore $\kappa(e,a) = -\frac{1}{2}$. 

For $n\geq 4$, we define $f$ as follows: (See Figure \ref{fig:k(e,a)forD_n})
\begin{enumerate}[(1).]
        \item Let $f(e)=1, f(a)=0$ and $f(a^2)=-1$. 
        \item By our construction, 
        \[
        f(b)=\min \{f(e)+d(b,e), f(a)+d(b,a), f(a^2)+d(b,a^2)\} 
        = \min \{2, 2, 2  \} 
        =2. 
        \]        
        Then, $f(ba^{n-1})=f(b)-d(b,a^2)=2-3=-1$.
    \end{enumerate}

In short,
 \begin{equation*}
        f(v) = 
     \begin{cases}
       1 &\quad\text{if } v=e, \\
       0, &\quad\text{if } v=a, \\
       -1, &\quad\text{if } v=a^2, \\
       2, &\quad\text{if } v=b,  \\
       -1, &\quad\text{if } v=ba^{n-1}. \\
     \end{cases}
    \end{equation*}
    It follows that
    \[
    W\geq \alpha + \frac{1-\alpha}{2}(0-(-1)+2-(-1)) 
    =\alpha +2(1-\alpha) 
    = 2-\alpha.
    \]
    Hence, $\kappa (e,a) \leq -1$ and therefore $\kappa(e,a)=-1$.

    Let us verify our answer by using Theorem \ref{theo_curvature_regular}. For $n=3$, since $r=2$ and $\sum_{v\in N^{\text{out}}(x)} d(v,w(v))=3$, we have $\kappa(e,a)=1-\frac{1}{2}(3)=-\frac{1}{2}$. Meanwhile for $n\geq 4$, $\kappa(e,a)=1-\frac{1}{2}(4)=-1$.  

\begin{figure}[htbp]
    \centering
    \begin{subfigure}{0.48\textwidth}
        \centering
       \begin{tikzpicture}[
    scale=1.,
    vertex/.style={circle, fill=black, inner sep=0pt, minimum size=3pt},
    boxed/.style={draw, rectangle, inner sep=2pt, font=\small}
]

\def\outerR{2.5}
\def\innerR{1.2}

\draw[, thick, -{Stealth[bend]}] (90:\outerR) arc (90:-30:\outerR);
\draw[, thick, -{Stealth[bend]}] (330:\outerR) arc (330:210:\outerR);
\draw[, thick, -{Stealth[bend]}] (210:\outerR) arc (210:90:\outerR);

\draw[, thick, -{Stealth[bend]}] (90:\innerR) arc (90:210:\innerR);
\draw[, thick, -{Stealth[bend]}] (210:\innerR) arc (210:330:\innerR);
\draw[, thick, -{Stealth[bend]}] (330:\innerR) arc (330:450:\innerR);

\draw[, thick, Stealth-Stealth] (90:\outerR) -- (90:\innerR);
\draw[, thick, Stealth-Stealth] (330:\outerR) -- (330:\innerR);
\draw[, thick, Stealth-Stealth] (210:\outerR) -- (210:\innerR);

\node[vertex] at (90:\outerR) {};
\node[vertex] at (330:\outerR) {};
\node[vertex] at (210:\outerR) {};
\node[vertex] at (90:\innerR) {};
\node[vertex] at (330:\innerR) {};
\node[vertex] at (210:\innerR) {};

\node[, above=3pt] at (90:\outerR) {$e$};
\node[, right=3pt] at (330:\outerR) {$a$};
\node[, left=3pt] at (210:\outerR) {$a^2$};

\node[, above right=2pt] at (90:\innerR) {$b$};
\node[, above left=2pt] at (210:\innerR) {$ba$};
\node[, above right=3pt] at (330:\innerR) {$ba^2$};

\node[boxed] at (90:\outerR + 0.3) [xshift=0.4cm,yshift=0.1cm] {$1$};
\node[boxed] at (330:\outerR) [xshift=0.8cm] {$0$};
\node[boxed] at (210:\outerR) [xshift=-0.5cm, yshift=-0.5cm] {$-1$};
\node[boxed] at (310:\innerR) [xshift=1.3cm, yshift=0.8cm] {$0$};
\node[boxed] at (80:1.8) [xshift=0.4cm, yshift=-0.2cm] {$2$};

\end{tikzpicture}
        \caption{$\Gamma(D_3, \{a,b\}).$}
    \end{subfigure}
    \hfill
    \begin{subfigure}{0.48\textwidth}
        \centering
                \begin{tikzpicture}[
    scale=1.3,
    vertex/.style={circle, fill=black, inner sep=0pt, minimum size=3pt},
    boxed/.style={draw, rectangle, inner sep=2pt, font=\small}
]

\draw[, thick] (170:2.5) arc (170:-30:2.5);
\node[vertex] at (170:2.5) (v1o) {};
\node[vertex] at (130:2.5) (v2o) {};
\node[vertex] at (90:2.5)  (v3o) {};
\node[vertex] at (50:2.5)  (v4o) {};
\node[vertex] at (10:2.5)  (v5o) {};
\node[vertex] at (-30:2.5) (v6o) {};

\draw[, thick] (170:1.3) arc (170:-30:1.3);
\node[vertex] at (170:1.3) (v1i) {};
\node[vertex] at (130:1.3) (v2i) {};
\node[vertex] at (90:1.3)  (v3i) {};
\node[vertex] at (50:1.3)  (v4i) {};
\node[vertex] at (10:1.3)  (v5i) {};
\node[vertex] at (-30:1.3) (v6i) {};

\begin{scope}[, thick, -{Stealth[bend]}]
    \draw (170:2.5) arc (170:160:2.5);
    \draw (120:2.5) arc (120:105:2.5);
    \draw (90:2.5)  arc (90:65:2.5);
    \draw (50:2.5)  arc (50:25:2.5);
    \draw (10:2.5)  arc (10:-15:2.5);

    \draw (130:1.3) arc (130:155:1.3);
    \draw (90:1.3)  arc (90:115:1.3);
    \draw (50:1.3)  arc (50:75:1.3);
    \draw (10:1.3)  arc (10:35:1.3);
    \draw (-30:1.3)  arc (-30:-5:1.3);
\end{scope}

\draw[, thick, Stealth-Stealth] (130:2.5) -- (130:1.3); 
\draw[, thick, Stealth-Stealth] (90:2.5) -- (90:1.3);   
\draw[, thick, Stealth-Stealth] (50:2.5) -- (50:1.3);  
\draw[, thick, Stealth-Stealth] (10:2.5) -- (10:1.3);  

\node[, above left] at (v2o) {$a^{n-1}$};
\node[, above] at (v3o) {$e$};
\node[boxed] at (v3o) [xshift=0.4cm, yshift=0.4cm] {1};

\node[, right] at (v4o) {$a$};
\node[boxed] at (v4o) [xshift=0.7cm] {$0$};

\node[, right] at (v5o) {$a^2$};
\node[boxed] at (v5o) [xshift=1.0cm] {$-1$};

\node[, left= 0.3cm of v2i,] {$ba$};
\node[, above right] at (v3i) {$b$};
\node[boxed] at (v3i) [xshift=0.6cm, yshift=0.4cm] {2};
\node[, right = 0.15cm of v4i,]  {$ba^{n-1}$};
\node[boxed, ] at (v4i) [xshift=1.4cm, yshift=-0.4cm] {$-1$};

\node[, below right] at (v5i) {$ba^{n-2}$};


\node[, rotate=-20] at (175:2.5) [xshift=0.cm, yshift=0.0cm] {$\approx$};
\node[, rotate=-20] at (175:1.3) [xshift=0.03cm, yshift=-0.1cm] {$\approx$};
\node[, rotate=-20] at (-35:1.3) [xshift=0.03cm, yshift=-0.1cm] {$\approx$};
\node[, rotate=-20] at (-35:2.5) [xshift=0.03cm, yshift=0.0cm] {$\approx$};
\end{tikzpicture}
    \caption{$\Gamma(D_n, \{a,b\})$ for $n\geq 4.$}
    \end{subfigure}
    \caption{Values of $f:V \rightarrow \R$ for $\kappa(e,a).$}
    \label{fig:k(e,a)forD_n}
\end{figure}


Next we compute $\kappa (e,b)$. For $n\geq 3$, we have $N^{\text{out}}(e)=\{a,b\}$ and $N^{\text{out}}(b)=\{e,ba\}$. Since $e\in N^{\text{out}}(a)$, we apply case (b) in the proof of Theorem \ref{theo_curvature_regular}. The values of probability measures $\mu_{e}^{\alpha}$ and $\mu_{b}^{\alpha}$ are
\begin{center}
\begin{minipage}{0.48\textwidth}
\[
\begin{alignedat}{2}
\mu_e^\alpha(e) & = \alpha,
&\qquad
\mu_a^\alpha(b) & = \alpha, \\
\mu_e^\alpha(b) & = \frac{1-\alpha}{2},
&\qquad
\mu_a^\alpha(e) & = \frac{1-\alpha}{2}, \\
\mu_e^\alpha(a) & = \frac{1-\alpha}{2},
&\qquad
\mu_a^\alpha(ba) & = \frac{1-\alpha}{2}.
\end{alignedat}
\]
\end{minipage}
\end{center}
Note that 
\begin{equation*}
d(a,ba) =
     \begin{cases}
       2, &\quad\text{if } n=3 \text{ with shortest path } (a,a^2,a^2b=ba) \\
       3, &\quad\text{if } n\geq 4 \text{ with shortest path } (a,ab=ba^{n-1},aba=b,ba) \\
     \end{cases} 
\end{equation*}

For every $n \geq 3$, we choose the pair $(a,ba)$ as they are the only pair available.
Then, the coupling $A$ is defined as $A(e,b)=\a-\frac{1-\a}{2}, A(e,e)=A(b,b)=A(a,ba)=\frac{1-\a}{2}$ and $0$ otherwise. By simple computation, we have 
\begin{equation*}
     \sum_{v,w\in \cV} A(v,w)d(v,w)= 
    \begin{cases}
       \dfrac{1+\a}{2} , &\quad\text{if } n=3, \\
       1 , &\quad\text{if } n\geq 4. 
    \end{cases}
\end{equation*}   

From Definition \ref{def_Wasserstein}, it follows that 
\begin{enumerate}
    \item For $n=3$, we have $\frac{1-W}{1-\a} \geq \frac{1}{2}$, giving $\kappa(e,b) \geq \frac{1}{2}$ by taking limit as $\alpha$ approach one.

    \item For $n\geq 4$, we have $\frac{1-W}{1-\a} \geq 0$. So $\kappa (e,b) \geq 0$. 
\end{enumerate}

For $n=3$, define the function $f:\cV \rightarrow \R$ as: (See Figure \ref{fig:k(e,b)forD_n})
 \begin{enumerate}
        \item Let $f(e)=1$ and $f(b)=0$.
        \item Then,
        $f(a)=\min \{f(e)+d(a,e), 
            f(b)+d(a,b)\} 
            =\min \{ 3, 2\} 
            =2. 
        $
        Hence, $f(ba)=f(a)-d(a,ba)=2-2=0$.
    \end{enumerate}
    
In summary, we have
    \begin{equation*}
        f(v) = 
     \begin{cases}
       1 &\quad\text{if } v=e, \\
       0, &\quad\text{if } v=b, \\
       2, &\quad\text{if } v=a,  \\
       0, &\quad\text{if } v=ba. \\
     \end{cases}
    \end{equation*}
    It follows that 
    \[
        W \geq (1-0)\alpha + \frac{1-\alpha}{2} [(0-1)+(2-0)] 
        = \frac{1+\alpha}{2}.
    \]
    Hence, $\kappa(e,a) \leq \frac{1}{2}$ and therefore $\kappa(e,a) = \frac{1}{2}$. 

For $n\geq 4$, we define $f$ as: (See Figure \ref{fig:k(e,b)forD_n})
   \begin{enumerate}
        \item Let $f(e)=1 $ and $ f(b)=0$. 
        \item Then,
        $f(a)=\min \{f(e)+d(a,e), f(b)+d(a,b)\} 
            = \min \{4, 2 \} 
            =2.$
        Hence, $f(ba)=f(a)-d(a,ba)=2-3=-1$.
    \end{enumerate}
In short,
 \begin{equation*}
        f(v) = 
     \begin{cases}
       1 &\quad\text{if } v=e, \\
       0, &\quad\text{if } v=b, \\
       2, &\quad\text{if } v=a,  \\
       -1, &\quad\text{if } v=ba. \\
     \end{cases}
    \end{equation*}

    It follows that
    \[
     W \geq  (1-0)\alpha+\frac{1-\alpha}{2} ((0-1)+ 2-(-1))  
     = \alpha+2\left(\frac{1-\alpha}{2} \right)
     =1.
    \]
    Hence, $\kappa(e,b) \leq 0$. Therefore, $\kappa(e,b)=0$. Alternatively, by Theorem \ref{theo_curvature_regular}, we have $\kappa(e,b)=1-\frac{1}{2}(2-1)=\frac{1}{2}$ for $n=3$ and $\kappa(e,b)=1-\frac{1}{2}(3-1)=0$ for $n \geq 4$.
\end{proof}

\begin{figure}[htbp]
    \centering
    \begin{subfigure}{0.48\textwidth}
        \centering
       \begin{tikzpicture}[
    scale=1.,
    vertex/.style={circle, fill=black, inner sep=0pt, minimum size=3pt},
    boxed/.style={draw, rectangle, inner sep=2pt, font=\small}
]

\def\outerR{2.5}
\def\innerR{1.2}

\draw[, thick, -{Stealth[bend]}] (90:\outerR) arc (90:-30:\outerR);
\draw[, thick, -{Stealth[bend]}] (330:\outerR) arc (330:210:\outerR);
\draw[, thick, -{Stealth[bend]}] (210:\outerR) arc (210:90:\outerR);

\draw[, thick, -{Stealth[bend]}] (90:\innerR) arc (90:210:\innerR);
\draw[, thick, -{Stealth[bend]}] (210:\innerR) arc (210:330:\innerR);
\draw[, thick, -{Stealth[bend]}] (330:\innerR) arc (330:450:\innerR);

\draw[, thick, Stealth-Stealth] (90:\outerR) -- (90:\innerR);
\draw[, thick, Stealth-Stealth] (330:\outerR) -- (330:\innerR);
\draw[, thick, Stealth-Stealth] (210:\outerR) -- (210:\innerR);

\node[vertex] at (90:\outerR) {};
\node[vertex] at (330:\outerR) {};
\node[vertex] at (210:\outerR) {};
\node[vertex] at (90:\innerR) {};
\node[vertex] at (330:\innerR) {};
\node[vertex] at (210:\innerR) {};

\node[, above=3pt] at (90:\outerR) {$e$};
\node[, right=3pt] at (330:\outerR) {$a$};
\node[, left=3pt] at (210:\outerR) {$a^2$};

\node[, above right=2pt] at (90:\innerR) {$b$};
\node[, above left=2pt] at (210:\innerR) {$ba$};
\node[, above right=3pt] at (330:\innerR) {$ba^2$};

\node[boxed] at (90:\outerR + 0.3) [xshift=0.4cm,yshift=0.1cm] {$1$};
\node[boxed] at (330:\outerR) [xshift=0.8cm] {$2$};
\node[boxed] at (220:\innerR) [xshift=-1.0cm, yshift=0.6cm] {$0$};
\node[boxed] at (80:1.8) [xshift=0.4cm, yshift=-0.2cm] {$0$};

\end{tikzpicture}
        \caption{$\Gamma(D_3, \{a,b\})$}
    \end{subfigure}
    \hfill
    \begin{subfigure}{0.48\textwidth}
        \centering
                \begin{tikzpicture}[
    scale=1.3,
    vertex/.style={circle, fill=black, inner sep=0pt, minimum size=3pt},
    boxed/.style={draw, rectangle, inner sep=2pt, font=\small}
]

\draw[, thick] (170:2.5) arc (170:-30:2.5);
\node[vertex] at (170:2.5) (v1o) {};
\node[vertex] at (130:2.5) (v2o) {};
\node[vertex] at (90:2.5)  (v3o) {};
\node[vertex] at (50:2.5)  (v4o) {};
\node[vertex] at (10:2.5)  (v5o) {};
\node[vertex] at (-30:2.5) (v6o) {};

\draw[, thick] (170:1.3) arc (170:-30:1.3);
\node[vertex] at (170:1.3) (v1i) {};
\node[vertex] at (130:1.3) (v2i) {};
\node[vertex] at (90:1.3)  (v3i) {};
\node[vertex] at (50:1.3)  (v4i) {};
\node[vertex] at (10:1.3)  (v5i) {};
\node[vertex] at (-30:1.3) (v6i) {};

\begin{scope}[, thick, -{Stealth[bend]}]
    \draw (170:2.5) arc (170:160:2.5);
    \draw (120:2.5) arc (120:105:2.5);
    \draw (90:2.5)  arc (90:65:2.5);
    \draw (50:2.5)  arc (50:25:2.5);
    \draw (10:2.5)  arc (10:-15:2.5);

    \draw (130:1.3) arc (130:155:1.3);
    \draw (90:1.3)  arc (90:115:1.3);
    \draw (50:1.3)  arc (50:75:1.3);
    \draw (10:1.3)  arc (10:35:1.3);
    \draw (-30:1.3)  arc (-30:-5:1.3);
\end{scope}

\draw[, thick, Stealth-Stealth] (130:2.5) -- (130:1.3); 
\draw[, thick, Stealth-Stealth] (90:2.5) -- (90:1.3);   
\draw[, thick, Stealth-Stealth] (50:2.5) -- (50:1.3);  
\draw[, thick, Stealth-Stealth] (10:2.5) -- (10:1.3);  

\node[, above left] at (v2o) {$a^{n-1}$};
\node[, above] at (v3o) {$e$};
\node[boxed] at (v3o) [xshift=0.4cm, yshift=0.4cm] {1};

\node[, right] at (v4o) {$a$};
\node[boxed] at (v4o) [xshift=0.7cm] {$2$};

\node[, right] at (v5o) {$a^2$};

\node[, left= 0.3cm of v2i,] {$ba$};
\node[boxed] at (v2i) [xshift=-1.3cm] {$-1$};

\node[, above right] at (v3i) {$b$};
\node[boxed] at (v3i) [xshift=0.6cm, yshift=0.4cm] {0};
\node[, right = 0.15cm of v4i,]  {$ba^{n-1}$};

\node[, below right] at (v5i) {$ba^{n-2}$};


\node[, rotate=-20] at (175:2.5) [xshift=0.cm, yshift=0.0cm] {$\approx$};
\node[, rotate=-20] at (175:1.3) [xshift=0.03cm, yshift=-0.1cm] {$\approx$};
\node[, rotate=-20] at (-35:1.3) [xshift=0.03cm, yshift=-0.1cm] {$\approx$};
\node[, rotate=-20] at (-35:2.5) [xshift=0.03cm, yshift=0.0cm] {$\approx$};

\end{tikzpicture}
    \caption{$\Gamma(D_n, \{a,b\})$ for $n\geq 4$}
    \end{subfigure}
    \caption{Values of $f:V \rightarrow \R$ for $\kappa(e,b).$}
    \label{fig:k(e,b)forD_n}
\end{figure}

\begin{proposition}
\label{prop_ricci_quartenion_{a,b}}
    The Lin-Lu-Yau Ricci curvature for the directed Cayley graph $\Gamma(Q_{4m}, \{a,b\})$ is
\begin{center}
\begin{tabular}{|l|c|c|c|c|c|l|} 
\hline
 & \multicolumn{5}{|c|}{$\Gamma(Q_{4m},\{a,b\})$}  \\
\hline
$\kappa(x,y)$ & $m=2$  &  $m=3$ & $m=4$  & $m=5$  & $m\geq 6$ \\ 
\hline
$\kappa(e,a)$    & $0$ & $-\frac{1}{2}$ & $-1$ & $-\frac{3}{2}$ & $-2$   \\
\hline
$\kappa(e,b)$  & $0$ & $-\frac{1}{2}$   & $-1$ & $-1$  & $-1$   \\
\hline
\end{tabular}
\end{center}
\end{proposition}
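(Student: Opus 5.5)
For each of the two arcs $(e,a)$ and $(e,b)$ I would apply Theorem \ref{theo_curvature_regular} with $r=2$. This reduces the problem to two tasks: deciding whether the tail lies in the out-neighbourhood of the head, and computing four directed distances in $\Gamma(Q_{4m},\{a,b\})$.

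Throughout I would use the normal form $a^i b^\epsilon$ ($0\le i<2m$, $\epsilon\in\{0,1\}$), together with $b a^k=a^{-k}b$, $b^2=a^m$ and $b^{-1}=b^3=a^m b$. Since the graph is a Cayley graph, $d(g,h)$ equals the length of the shortest positive word in $a,b$ representing $g^{-1}h$.

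\emph{Arc $(e,a)$.} Here $N^{\text{out}}(e)=\{a,b\}$ and $N^{\text{out}}(a)=\{a^2,ab\}$. We have $e\notin N^{\text{out}}(a)$ because $a^{-1}\notin\{a,b\}$ for $m\ge 2$, so case (a) applies. Both $d(a,a^2)$ and $d(a,ab)$ equal $1$, so by Theorem \ref{theo_curvature_regular}
\[
\kappa(e,a)=1-\tfrac12\bigl(1+\min\{d(b,ab),d(b,a^2)\}\bigr).
\]
I then compute $b^{-1}ab=a^{-1}$ and $b^{-1}a^2=a^{m-2}b$. The upper bounds come from explicit words:
\begin{itemize}
\item $d(b,a^2)\le m-1$, via the word $a^{m-2}b$;
\item $d(b,ab)\le\min\{2m-1,\,m+1,\,5\}$, via the words $a^{2m-1}$, $a^{m-1}b^2$ and $bab^3$.
\end{itemize}
Together these give the claimed value $\min\{m-1,5\}$ for the minimum, which reproduces the table.

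\emph{Arc $(e,b)$.} Here $N^{\text{out}}(e)=\{a,b\}$ and $N^{\text{out}}(b)=\{ba,b^2\}=\{ba,a^m\}$. We have $e\notin N^{\text{out}}(b)$ because $b^{-1}\notin\{a,b\}$, so again case (a) applies. Both $d(b,ba)$ and $d(b,a^m)=d(b,b^2)$ equal $1$, so
\[
\kappa(e,b)=1-\tfrac12\bigl(1+\min\{d(a,ba),d(a,a^m)\}\bigr).
\]
I compute $a^{-1}ba=ba^2$, which has length at most $3$, and $d(a,a^m)\le m-1$. This gives $\min\{m-1,3\}$ and hence the second row of the table. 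I would also note the coincidence $ba^2=b^3$ when $m=2$.

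\emph{Main obstacle: matching lower bounds.} The upper bounds above only give lower bounds on $\kappa$, so I must show that no shorter positive word exists for the relevant elements. These are $a^{m-2}b$ and $a^{-1}$ in the first arc, and $ba^2$ and $a^{m-1}$ in the second.

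My approach is to track the normal form of a positive word $a^{k_0}ba^{k_1}b\cdots ba^{k_t}$. Each $b$ flips the sign of every subsequent $a$-exponent, and each pair of $b$'s contributes $a^m$. The word therefore represents $a^{E}b^{t \bmod 2}$ with $E\equiv \sum \pm k_j+m\lfloor t/2\rfloor \pmod{2m}$. For a target exponent, I would minimise $t+\sum k_j$ subject to this congruence by a short case analysis on $t$ (namely $t=0,1,2,3$, noting that $t\ge 4$ already costs at least $4$ plus the remaining $a$'s).

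I expect this minimum-word-length verification, done uniformly in $m$, to be the only real work. In particular the claim that $a^{-1}$ needs exactly $\min\{2m-1,m+1,5\}$ letters needs care for the small values $m\le 5$, where the three candidate words exchange roles.
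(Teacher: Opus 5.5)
Your proposal is correct and follows the paper's route: both arcs fall under case (a) of Theorem~\ref{theo_curvature_regular} with $r=2$, so each curvature reduces to $1-\tfrac12\bigl(1+\min\{\cdot,\cdot\}\bigr)$ over two cross distances, exactly as in the paper. Your normal-form/congruence argument for the lower bounds on positive word length supplies a step the paper only asserts from exhibited words. Also, your witness $a^{m-1}b^2$ for $d(b,ab)\le m+1$ is correct, whereas the paper's word $b^2a^{m-1}b$ does not represent $b^{-1}ab=a^{-1}$.
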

\begin{proof}
     The graph $\Gamma(Q_{4m}, S')$ is $2$-regular. We will compute $\kappa(e,a)$ first.  Note that $N^{\text{out}}(e)=\{a,b\}$ and  $N^{\text{out}}(a)=\{a^2,ab\}$.
    Then,
    \begin{enumerate}[(1).]
    \item $d(a,a^2)=d(a,ab)=1$.
    \item For $d(b,a^2)$, since $b(a^{m-2}b)=a^2$ or $b(b^3a^2)=a^2$, we have 
    \begin{align*}
    d(b,a^2) = \min \{m-1,5 \}   
    = \begin{cases}
       1, &\quad\text{if } m=2, \\
       2, &\quad\text{if } m=3,  \\
       3, &\quad\text{if } m=4, \\
       4, &\quad\text{if } m=5, \\
       5, &\quad\text{if } m\geq 6.  \\
     \end{cases} 
    \end{align*}
      \item For $d(b,ab)$, since $b(b^2a^{m-1}b)=ab$ or $b(b^3ab)=ab$, we have       
      \begin{align*}
      d(b,ab) = \min \{m+1, 5\} =
    \begin{cases}
       3, &\quad\text{if } m=2, \\
       4, &\quad\text{if } m=3,  \\
       5, &\quad\text{if } m\geq 4. \\
     \end{cases}   
      \end{align*}
\end{enumerate}

Since $d(b,a^2) \leq d(b,ab)$ for $m \geq 2$, we choose the pairs $(a,ab)$ and $(b,a^2)$. Then,
\begin{align*}
d(a,ab)+d(b,a^2) 
    = \begin{cases}
       2, &\quad\text{if } m=2, \\
       3, &\quad\text{if } m=3,  \\
       4, &\quad\text{if } m=4, \\
       5, &\quad\text{if } m=5, \\
       6, &\quad\text{if } m\geq 6.  \\
     \end{cases} 
\end{align*}

By Theorem \ref{theo_curvature_regular} (Case $e \notin N^{\text{out}}(a)$), we have  

\begin{align*}
\kappa(e,a)
    = \begin{cases}
       0, &\quad\text{if } m=2, \\
       -\frac{1}{2}, &\quad\text{if } m=3,  \\
       -1, &\quad\text{if } m=4, \\
       -\frac{3}{2}, &\quad\text{if }m=5, \\
       -2, &\quad\text{if } m\geq 6.  \\
     \end{cases} 
\end{align*}

To compute $\kappa(e,b)$, note $N^{\text{out}}(e)=\{a,b\}$ and $N^{\text{out}}(b)=\{ba=a^{2m-1}b, b^2=a^m\}$. 

 \begin{enumerate}[(1).]
    \item Since $a(ba^2)=ba$, it follows that $d(a,ba)=3$ for every $m\geq 2$. 

    \item From $a(bab)=b^2$ and $a(a^{m-1})=a^m=b^2$, we have
    \begin{align*}
   d(a, b^2)=\min \{m-1,3\}
     = \begin{cases}
       1, &\quad\text{if } m=2, \\
       2, &\quad\text{if } m=3,  \\
       3, &\quad\text{if } m\geq 4. \\
     \end{cases} 
     \end{align*}
    \item $d(b,ba)=d(b,b^2)=1$. 
\end{enumerate}

Thus, for $m\geq 4$, we choose the pairs $(a,b^2)$ and $(b,ba)$. This gives
\begin{align*}
d(a,b^2)+d(b,ba) 
    = \begin{cases}
       2, &\quad\text{if } m=2, \\
       3, &\quad\text{if } m=3,  \\
       4, &\quad\text{if } m\geq 4. \\
     \end{cases} 
\end{align*}

It follows that
\begin{align*}
\kappa(e,b)
    = \begin{cases}
       0, &\quad\text{if } m=2, \\
       -\frac{1}{2}, &\quad\text{if } m=3,  \\
       -1, &\quad\text{if } m\geq4. \\
     \end{cases} 
\end{align*}\qedhere
\end{proof}

\begin{proposition}
\label{prop_ricci_quartenion_3_a^{-1}}
    The Lin-lu-Yau Ricci curvature for the directed Cayley graphs $\Gamma(Q_{4m}, \{a,a^{-1},b\})$  is
\begin{center}
\begin{tabular}{|l|c|c|c|c|c|l|} 
\hline
 & \multicolumn{3}{|c|}{$\Gamma(Q_{4m},\{a,a^{-1},b\})$}\\
\hline
$\kappa(x,y)$ & $m=2$  &  $m=3$ & $m\geq4$  \\ 
\hline
$\kappa(e,a)$    & $\frac{2}{3}$ & $\frac{1}{3}$ & $-\frac{1}{3}$   \\
\hline
$\kappa(e,b)$  & $0$ & $0$   & $0$  \\
\hline
\end{tabular}
\end{center}
\end{proposition}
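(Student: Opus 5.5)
The plan is to apply Theorem \ref{theo_curvature_regular} to each of the two arcs $(e,a)$ and $(e,b)$ of the $3$-regular digraph $\Gamma(Q_{4m},\{a,a^{-1},b\})$, following the pattern of the proof of Proposition \ref{prop_ricci_quartenion_{a,b}}. For each arc I will list the two out-neighbourhoods, compute the relevant directed distances from the relations $a^{2m}=e$, $b^2=a^m$, $ab=ba^{-1}$ (equivalently $ba=a^{-1}b$), and find a perfect distance matching in the sense of Definition \ref{def_perfectdistancematching}. Since $r=3$, every pairing is one of at most $3!$ bijections, so minimality is checked by listing all of them.

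For $\kappa(e,b)$ we have $N^{\text{out}}(e)=\{a,a^{-1},b\}$ and $N^{\text{out}}(b)=\{ba,ba^{-1},b^2\}$. Since $b^{-1}=b^3\notin S$, we have $e\notin N^{\text{out}}(b)$, so case (a) of the theorem applies. Using $ba^{-1}=ab$ and $ba=a^{-1}b$, the three pairs $(a,ba^{-1})$, $(a^{-1},ba)$ and $(b,b^2)$ each have distance $1$, each realised by the single generator $b$. The neighbourhoods are disjoint, so every pair has distance at least $1$, and this matching is therefore optimal with cost $3$. This gives $\kappa(e,b)=1-\tfrac13\cdot 3=0$ for every $m\ge 2$, matching the table.

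For $\kappa(e,a)$ we have $N^{\text{out}}(a)=\{a^2,e,ab\}$, so $e\in N^{\text{out}}(a)$ and case (b) applies with $w(a)=e$. It remains to match $\{a^{-1},b\}$ with $\{a^2,ab\}$. The formula reads $\kappa(e,a)=1-\tfrac13(\Sigma-1)$, where $\Sigma$ is the cost of this matching, so the table values $\tfrac23,\tfrac13,-\tfrac13$ correspond to $\Sigma=2,3,5$ respectively. I will compute the four distances $d(a^{-1},a^2)$, $d(a^{-1},ab)$, $d(b,a^2)$ and $d(b,ab)$ as minima over competing words, as in Proposition \ref{prop_ricci_quartenion_{a,b}}. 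The word $a^3$ has length $3$; the word $b^2a^{3-m}$ has length $2+|m-3|$; and $b^3ab$ together with its variants can also compete. For $m=2$, $a^3=a^{-1}$, so $d(a^{-1},a^2)=1$. For $m=3$, $a^3=b^2$ gives $d(a^{-1},a^2)=2$. For $m\ge4$ the value should stabilise.

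The main obstacle is the distance $d(b,ab)$ and the resulting minimum for $m\ge4$. The relation $ab=ba^{-1}$ appears to give $d(b,ab)=1$ via the generator $a^{-1}$. Together with $d(a^{-1},a^2)=3$, that would give $\Sigma=4$, not the $\Sigma=5$ required for $-\tfrac13$. I would therefore first recheck this step with the exact arc convention $(g,gs)$ of Definition \ref{def_directed Cayley graphs}. I would then compare all candidate bijections, namely $\{(a^{-1},a^2),(b,ab)\}$ and $\{(a^{-1},ab),(b,a^2)\}$, and confirm optimality via the $1$-Lipschitz function built in the proof of Theorem \ref{theo_curvature_regular}. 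The stated value $\kappa(e,a)=-\tfrac13$ for $m\ge 4$ stands or falls on this computation. If $\Sigma=5$ cannot be confirmed, then that entry of the table, rather than the method, is what needs revisiting.
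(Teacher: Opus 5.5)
Your plan is the paper's own method: Theorem \ref{theo_curvature_regular} applied arc by arc, case (a) for $(e,b)$, case (b) with $w(a)=e$ for $(e,a)$, and distances read off from $ab=ba^{-1}$ and $b^2=a^m$. Your $\kappa(e,b)=0$ argument is complete.

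Your suspicion about $\kappa(e,a)$ is correct: the entry $-\tfrac13$ for $m\ge4$ is false, so the statement as printed cannot be proved.
\begin{itemize}
\item Under the $(g,gs)$ convention, $(b,ba^{-1})=(b,ab)$ is an arc, so $d(b,ab)=1$.
\item For $m\ge4$, $d(a^{-1},a^2)=|a^3|=3$, because the only powers of $a$ of length at most $2$ are $e,a^{\pm1},a^{\pm2},a^m$.
\item The competing bijection costs $d(a^{-1},ab)+d(b,a^2)=|a^2b|+|a^{m-2}b|=3+\min\{m-1,5\}\ge 6$.
\end{itemize}
Hence $\Sigma=4$ and $\kappa(e,a)=1-\tfrac13(4-1)=0$. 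The same comparison ($4>2$ and $5>3$) confirms the entries $\tfrac23,\tfrac13$ for $m=2,3$.

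The paper's own proof does exactly this computation. It uses the same pairs $(a^{2m-1},a^2)$ and $(b,ab)$, gets sums $2,3,4$, and concludes $\tfrac23,\tfrac13,0$; that display is mislabelled $\kappa(e,b)$. Its appendix table for $\{a,a^{-1},b\}$ also lists $0$. So the $-\tfrac13$ in the statement is a typo, and you should state the correction outright rather than leave your conclusion conditional.

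For the optimality check, do not rely on the inductive construction of $f$ in the proof of Theorem \ref{theo_curvature_regular}. When it sets $f(w(v_{k+1}))=f(v_{k+1})-d(v_{k+1},w(v_{k+1}))$, nothing there guarantees $f(u)-f(w(v_{k+1}))\le d(u,w(v_{k+1}))$ for earlier $u$. Here a direct certificate is easy. For $m\ge4$, define $f$ on $\{e,a,a^{-1},b,a^2,ab\}$, which contains the support of every coupling:
\begin{itemize}
\item $f(e)=1$, $f(a)=0$, $f(a^{-1})=2$,
\item $f(a^2)=-1$, $f(b)=1$, $f(ab)=0$.
\end{itemize}
Checking the thirty ordered pairs gives $f(u)-f(v)\le d(u,v)$ throughout. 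Equality holds only for $(e,a)$, $(e,a^2)$, $(a,a^2)$, $(a^{-1},e)$, $(a^{-1},a)$, $(a^{-1},a^2)$ and $(b,ab)$.

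This $f$ gives the lower bound $W(\mu_e^\alpha,\mu_a^\alpha)\ge \alpha+\tfrac{1-\alpha}{3}(3-0)=1$. For the upper bound, take the coupling with $A(e,a)=\alpha-\tfrac{1-\alpha}{3}$ and $A(e,e)=A(a,a)=A(a^{-1},a^2)=A(b,ab)=\tfrac{1-\alpha}{3}$; its cost is exactly $1$. Hence $\kappa_\alpha(e,a)=0$ for $\alpha$ near $1$, and $\kappa(e,a)=0$, independently of the general theorem.
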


\begin{proof}
    If $S'=\{a,a^{-1}=a^{2m-1},b\}$, then $\Gamma(Q_{4m}, S')$ is $3$-regular. We will compute $\kappa(e,a)$ first. Note that $N^{\text{out}}(e)=\{a,a^{2m-1},b\}$ and  $N^{\text{out}}(a)=\{e,a^2,ab\}$.

Observe that 
 \begin{enumerate}[(1).]
    \item 
     \begin{align*}
    d(a^{2m-1},a^2)
    = \begin{cases}
       1, &\quad\text{if } m=2 \text{ as $a^3(a^{-1})=a^2$.} \\
       2, &\quad\text{if } m=3 \text{ as $a^5(b^2)=a^8=a^2$.} \\
       3, &\quad\text{if } m\geq 4 \text{ as $a^{2m-1}(a^3)=a^2$.}\\
     \end{cases} 
     \end{align*}

    \item From $a^{2m-1}(aab)=ab$, $d(a^{2m-1},ab)=3$.

    \item \begin{align*}
    d(b,a^2) = \min \{m-1,5 \}   
    = \begin{cases}
       1, &\quad\text{if } m=2 \\
       2, &\quad\text{if } m=3  \\
       3, &\quad\text{if } m=4 \\
       4, &\quad\text{if }m=5 \\
       5, &\quad\text{if } m\geq 6  \\
     \end{cases} 
    \end{align*}
 
    \item From $b(a^{-1})=ab$, we get $d(b,ab)=1$.  
\end{enumerate}
From above, we choose the pairs $(a^{2m-1},a^2)$ and $(b,ab)$. This gives
    \begin{align*}
    d(a^{2m-1},a^2)+d(b,ab) 
    = \begin{cases}
       2, &\quad\text{if } m=2 \\
       3, &\quad\text{if } m=3  \\
       4, &\quad\text{if } m\geq4 \\
     \end{cases} 
    \end{align*}

By Theorem \ref{theo_curvature_regular} (Case $e \in N^{\text{out}}(a)$)
\begin{align*}
\kappa(e,b)
    = \begin{cases}
       \frac{2}{3}, &\quad\text{if } m=2 \\
       \frac{1}{3}, &\quad\text{if } m=3  \\
       0, &\quad\text{if } m\geq4 \\
     \end{cases} 
\end{align*}

Now we compute $\kappa(e,b)$. Note that $N^{\text{out}}(e)=\{a,a^{-1}=a^{2m-1},b\}$ and  $N^{\text{out}}(b)=\{ba,ba^{-1},b^2\}$. Then,
 \begin{enumerate}[(1).]
    \item Since $a(ba^2)=ba$, it follows that $d(a,ba)=3$ for every $m\geq 2$. 

    \item Since $ba^{-1}=ab$, we have $d(a,b a^{-1})=1$ for every $m\geq 2$.

    \item From $a(bab)=b^2$ and $a(a^{m-1})=a^m=b^2$, we have
    \begin{align*}
   d(a, b^2)=\min \{m-1,3\}
     = \begin{cases}
       1, &\quad\text{if } m=2, \\
       2, &\quad\text{if } m=3,  \\
       3, &\quad\text{if } m\geq 4. \\
     \end{cases} 
     \end{align*}
     
    \item $d(b,ba)=d(ba^{-1})=d(b,b^2)=1$. 

    \item $d(a^{-1},ba)=1$, $d(a^{-1}, ba^{-1})=3$ and 
        \begin{align*}
   d(a^{-1}, b^2)=\min \{m-1,3\}
     = \begin{cases}
       1, &\quad\text{if } m=2, \\
       2, &\quad\text{if } m=3,  \\
       3, &\quad\text{if } m\geq 4. \\
     \end{cases} 
     \end{align*}
\end{enumerate}

For every $m \geq 2$, we consider the pairs $(a,ba^{-1}), (a^{-1},ba)$ and $(b,b^2)$ as $d(a,ba^{-1})+d(a^{-1},ba)+ d(b,b^2)=1+1+1=3$. Then, $\kappa(e,b)=1-\frac{1}{3}(3)=0$ for every $m\geq 2$.
\end{proof}

We consider now the generating set $S'$ being $\{a,b,b^{-1} \}$.
\begin{proposition}
\label{prop_ricci_quartenion_3_b^{-1}}
    The Ricci curvature for the directed Cayley graphs $\Gamma(Q_{4m}, \{a,b,b^{-1}\})$ 
\begin{center}
\begin{tabular}{|c|c|c|c|} 
\hline
 & \multicolumn{3}{|c|}{$\Gamma(Q_{4m},\{a,b,b^{-1}\}), m\geq 2 $}\\
\hline
$\kappa(x,y)$ & $m=2$  &  $m=3$ & $m\geq4$ \\
\hline
$\kappa(e,a)$  & $0$ & $-\frac{2}{3}$ & $-\frac{4}{3}$   \\
\hline
$\kappa(e,b)$  & $\frac{2}{3}$ & $\frac{1}{3}$ & $0$    \\
\hline
\end{tabular}
\end{center}
\end{proposition}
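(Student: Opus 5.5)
The plan is to apply Theorem \ref{theo_curvature_regular} directly, as in Propositions \ref{prop_ricci_quartenion_{a,b}} and \ref{prop_ricci_quartenion_3_a^{-1}}. The graph $\Gamma(Q_{4m},\{a,b,b^{-1}\})$ is $3$-regular, and in $Q_{4m}$ we have $b^{-1}=b^3=a^mb$. The relations I will use repeatedly to rewrite targets in normal form $a^ib^j$ ($0\le i<2m$, $j\in\{0,1\}$) are
\[
ba=a^{-1}b,\qquad b^2=a^m,\qquad b^{-1}ab=a^{-1}.
\]
For each arc I will list the out-neighbourhoods, compute the directed distance from every vertex of $N^{\text{out}}(x)$ (or $N^{\text{out}}(x)-\{y\}$) to every vertex of $N^{\text{out}}(y)$ (or $N^{\text{out}}(y)-\{x\}$), and minimise over the bijections. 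By left-invariance, $d(g,h)=d(e,g^{-1}h)$ is the word length of $g^{-1}h$ in $\{a,b,b^{-1}\}$. So every entry of the distance tables reduces to a word-length computation in $Q_{4m}$.

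\emph{Arc $(e,a)$.} Here $N^{\text{out}}(e)=\{a,b,b^{-1}\}$ and $N^{\text{out}}(a)=\{a^2,ab,ab^{-1}\}$, and $e\notin N^{\text{out}}(a)$, so case (a) of Theorem \ref{theo_curvature_regular} applies. The vertex $a$ is at distance $1$ from all three targets. The nontrivial entries are the word lengths of
\[
b^{-1}a^2,\quad b^{-1}ab=a^{-1},\quad b^{-1}ab^{-1},\quad ba^2,\quad bab,\quad bab^{-1}.
\]
For each I will compare the candidate words:
\begin{itemize}
\item pure powers of $a$, of length $\min(k,2m-k)$-type using $a^{-1}=a^{2m-1}$;
\item words routing through $b^{\pm2}=a^m$, such as $a^{-1}=b^{2}a^{m-1}$;
\item conjugation words such as $a^{-1}=b^{-1}ab$ of length $3$.
\end{itemize}
From these I take the minimum as a function of $m$. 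The expected outcome, reading off the table, is that the optimal total cost is $3$, $5$ and $7$ for $m=2$, $m=3$ and $m\ge4$ respectively. This gives $\kappa(e,a)=1-\tfrac13\cdot\text{cost}\in\{0,-\tfrac23,-\tfrac43\}$. To certify the minimum I will simply enumerate all $3!=6$ bijections, with $a$ sent to whichever target is left over.

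\emph{Arc $(e,b)$.} Here $N^{\text{out}}(b)=\{ba,b^2,e\}$ and $e\in N^{\text{out}}(b)$, so case (b) applies. We take $w(b)=e$ and need a perfect distance matching from $\{a,b^{-1}\}$ to $\{ba,b^2\}$; there are only two bijections. The four distances are:
\begin{itemize}
\item $d(b^{-1},b^2)=1$;
\item $d(a,ba)=3$, as in Proposition \ref{prop_ricci_quartenion_{a,b}}; I will recheck this now that $b^{-1}$ is available;
\item $d(a,b^2)=|a^{m-1}|$, which should equal $\min\{m-1,3\}$;
\item $d(b^{-1},ba)=|b^2a|=|a^{m+1}|$.
\end{itemize}
The target is a cost $\sum d-1$ equal to $1$, $2$ and $3$ for $m=2$, $m=3$ and $m\ge4$. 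This would give $\kappa(e,b)=1-\tfrac13(\sum d-1)=\tfrac23,\tfrac13,0$. For $m\ge4$ the pairing $(b^{-1},b^2),(a,ba)$ gives $1+3=4$. For small $m$ the pairing $(a,b^2),(b^{-1},ba)$ should win, which requires $|a^{m+1}|$ to be small there. I must check this using $a^{m+1}=a^{-(m-1)}$ and conjugation by $b^{\pm1}$.

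The main obstacle is certifying the \emph{lower} bounds on word lengths, i.e.\ that no shorter word exists, especially for the three-generator set where $b$ and $b^{-1}$ create many short paths. I will handle this with a breadth-first enumeration of the balls of radius $\le3$ about $e$. Every element of $Q_{4m}$ is $a^ib^j$, and a word with $k$ letters $b^{\pm1}$ and $\ell$ letters $a$ yields $a^{i}b^{k}$ with $|i|\le \ell$ up to the sign changes from commuting past $b$. This shows that $a^i$ with $2\le i\le 2m-2$ needs length at least $\min(i,\,|i-m|+2,\,3)$-type bounds. These small case distinctions are exactly where the thresholds $m=2$, $m=3$ and $m\ge4$ arise.
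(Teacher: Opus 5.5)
Your treatment of the arc $(e,a)$ is the paper's argument. You use the same six word lengths: $d(b,ab)=|a^{-1}|=3$, $d(b^{-1},ab^{-1})=|bab^{-1}|=|a^{-1}|=3$, and $\min\{m-1,3\}$ for the other four. The optimal pairing is then $(a,a^2),(b,ab^{-1}),(b^{-1},ab)$ with cost $\min\{2m-1,7\}$.

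The gap is in the arc $(e,b)$, which the paper omits. There you carry over $d(a,ba)=3$ from $\Gamma(Q_{4m},\{a,b\})$. You then expect the small-$m$ values to come from the pairing $(a,b^2),(b^{-1},ba)$ through a small $|a^{m+1}|$. That step cannot work. With $d(a,b^2)=m-1$ for $m=2,3$, reaching your target sums $2$ and $3$ would require $d(b^{-1},ba)=|a^{m+1}|=1$, i.e.\ $a^{m+1}\in\{a,b,b^{-1}\}$, which is false. In fact $|a^{m+1}|=3$ for every $m\ge 2$. So, executed as written, your computation gives minimal cost $4$ and $\kappa(e,b)=0$ for all $m$, contradicting the table at $m=2,3$.

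The missing observation is that $d(a,ba)$ itself drops once $b^{-1}$ is a generator. We have $a^{-1}(ba)=a^{-2}b=a^{m-2}b^{-1}$, because $b^{-1}=a^mb$ and $a^{-m}=a^m$. Hence $d(a,ba)\le m-1$; in $Q_8$ one even has $ba=ab^{-1}$, so $d(a,ba)=1$.

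For the lower bounds, the elements of word length at most $2$ are $e,a,b,a^mb,a^2,ab,a^{m+1}b,a^{-1}b,a^m,a^{m-1}b$.
\begin{itemize}
\item $a^{-2}b$ occurs in this list only for $m=2$ (as $a^mb$) or $m=3$ (as $a^{m+1}b$), and $baa=a^{-2}b$. Hence $d(a,ba)=\min\{m-1,3\}$.
\item $a^{m+1}$ never occurs in the list, and $bba=a^{m+1}$, so $|a^{m+1}|=3$.
\item $|a^{m-1}|=\min\{m-1,3\}$, using $b^{-1}ab^{-1}=a^{m-1}$ for $m\ge 4$.
\end{itemize}
Thus $(a,ba),(b^{-1},b^2)$ is the perfect distance matching for every $m\ge 2$. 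Its cost is $\min\{m-1,3\}+1$, against $\min\{m-1,3\}+3$ for the alternative. Case (b) of Theorem~\ref{theo_curvature_regular} then gives $\kappa(e,b)=1-\tfrac13\min\{m-1,3\}$, i.e.\ $\tfrac23,\tfrac13,0$ for $m=2$, $m=3$ and $m\ge 4$.
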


\begin{proof}
     Since $|\{a,b,b^{-1}\}|=3$, $\Gamma(Q_{4m}, S')$ is $3$-regular. We will compute $\kappa(e,a)$ first. Note that $N^{\text{out}}(e)=\{a,b,b^{-1}\}$ and  $N^{\text{out}}(a)=\{a^2,ab,ab^{-1}\}$. Then,

     \begin{enumerate}[(1).]
         \item $d(a,a^2)=d(a,ab)=d(a,ab^{-1})=1$.

         \item $d(b,a^2)=d(b,ab^{-1})=\min\{m-1,3\}$ and $d(b,ab)=3$.

         \item $d(b^{-1},a^2)=d(b^{-1},ab)=\min\{m-1,3\}$ and $d(b^{-1},ab^{-1})=3$.
     \end{enumerate}

     We choose the pairs $(a,a^2), (b,ab^{-1})$ and $(b^{-1}, ab)$. This gives 
     \begin{align*}
         \sum_{v \in N^{\text{out}}(e) } d(v,w(v))
         = \min \{ 2m-1,7\}
         =\begin{cases}
            3,  &\text{ if $m\geq 2$} \\
            5,  &\text{ if $m\geq 3$} \\
            7,  &\text{ if $m\geq 4$}
         \end{cases}
     \end{align*}
It follows that
 \begin{align*}
         \kappa (e,a)
         =\begin{cases}
            0,              & \text{ if $m= 2$} \\
            -\frac{2}{3},   & \text{ if $m= 3$} \\
            -\frac{4}{3},   & \text{ if $m\geq 4$}
         \end{cases}
     \end{align*}
By similar argument, one can compute $\kappa(e,b)$ as well. We omit the proof.
\end{proof}

The examples considered above concern directed Cayley graphs of dihedral groups and generalized quaternion groups with several non-trivial choices of generating sets. Since these groups, as well as many other groups, may admit further generating sets, we develop an algorithm for computing the Lin-Lu-Yau Ricci curvature of every arc $(e,s)$, where $s\in S$ and $S$ is a generating set of a group $G$. In particular, this algorithm applies not only to the families studied in this section, but also to common classes of groups such as $\mathbb{Z}_n$, symmetric groups $S_n$, and alternating groups $A_n$. The algorithm is presented in the Appendix below.

\section*{Acknowledgment}
Johnny Lim acknowledges the support from the Ministry of Higher Education Malaysia for Fundamental Research Grant Scheme with Project Code: 
\linebreak 
FRGS/1/2025/STG06/USM/02/1. Kevin Fung thanks Shi Kangli for the discussion and implementation of algorithm. \\


\noindent \textbf{Conflicts of interest.} \quad The authors declare no conflicts of interest. \\
\textbf{Data availability.} \quad Not applicable. \\

\bibliography{bibliography}{}
\bibliographystyle{amsplain}


\newpage
\appendix
\section{Table of the Lin-Lu-Yau Ricci curvature of directed Cayley graphs of certain group} \label{appendix_table}
Here are some remarks for the readers:
\begin{remark}
    \begin{enumerate}[1.]
        \item For generating set $S=\{s\}$ of size one, by Lemma \ref{lemma_curvature_single_generator}, we have $\kappa(e,s)=0$ or $2$. By our definition, $\kappa(e,t)=-\infty$ if $t$ cannot be generated by $s$. 

        \item Note that in the case of $\Gamma (\Z_n,\{1\})$. We have $\kappa(e,-1)=-\infty$. This is because $N^{\text{out}}(e)=\{1\}$ and $N^{\text{out}}(-1)=\{e\}$. As $1$ is of infinite order, $d(1,e)=\infty$. This example illustrates that it is possible to have $\kappa(e,s)=-\infty$ if it not possible to reach $w\in N^{\text{out}}(s)$ from any $v\in N^{\text{out}}(e)$. 

        \item For $G=D_n, Q_{4m}$ and $\Z_n$, the case of $S$ being the set of all generators of $G$ (so that $\Gamma(G,S)$ is undirected) and is symmetric are obtained in \cite{mizukaiRicci_Cayley2024}. We include those tables as well for the sake of completeness.

        \item The complete algorithm can be accessed from \url{https://github.com/Kevin12345-math/Algorithm-for-Lin-Lu-Yau-Curvature-of-Directed-Graphs/tree/main}.
    \end{enumerate}
\end{remark}



\begin{landscape} 
\begin{table}[p]  
\centering
\label{tab:graph_curvature}
\vspace{0.3cm}

\scalebox{1.0}{
\begin{tabular}{|c|c|c|c|c|}
\hline
\multicolumn{5}{|c|}{$\Gamma(Q_{4m}, -),\quad (|S|=1$)} \\
\hline
\multirow{2}{*}{$\kappa(x,y)$} & \multicolumn{1}{c|}{$\{a\}$} & \multicolumn{1}{c|}{$\{a^{-1}\}$} & \multicolumn{1}{c|}{$\{b\}$} & \multicolumn{1}{c|}{$\{b^{-1}\}$} \\
\cline{2-5}
 & $m\geq 2$ & $m\geq2$ & $m\geq2$ & $m\geq 2$ \\
\hline
$\kappa(e,a)$ & $0$ & $-\infty$ & $-\infty$ & $-\infty$ \\
\hline
$\kappa(e,a^{-1})$ & $-\infty$ & $0$ & $-\infty$ & $-\infty$ \\
\hline
$\kappa(e,b)$ & $-\infty$ & $-\infty$ & $0$ & $-\infty$ \\
\hline
$\kappa(e,b^{-1})$ & $-\infty$ & $-\infty$ & $-\infty$ & $0$ \\
\hline
\end{tabular}
}

\vspace{0.6cm} 

\scalebox{0.7}{
\begin{tabular}{|c|c|c|c|c|c|c|c|c|c|c|c|c|c|c|c|c|c|c|c|c|c|c|c|}
\hline
\multicolumn{24}{|c|}{$\Gamma(Q_{4m}, -),\quad (|S|=2)$} \\
\hline
\multirow{2}{*}{$\kappa(x,y)$} & \multicolumn{2}{c|}{$\{a,a^{-1}\}$} & \multicolumn{5}{c|}{$\{a,b\}$} & \multicolumn{5}{c|}{$\{a,b^{-1}\}$} & \multicolumn{5}{c|}{$\{a^{-1},b\}$} & \multicolumn{5}{c|}{$\{a^{-1},b^{-1}\}$} & \multicolumn{1}{c|}{$\{b,b^{-1}\}$} \\
\cline{2-24}
 & $m=2$ & $m\geq3$ & $m=2$ & $m=3$ & $m=4$ & $m=5$ & $m\geq 6$ & $m=2$ & $m=3$ & $m=4$ & $m=5$ & $m\geq 6$ & $m=2$ & $m=3$ & $m=4$ & $m=5$ & $m\geq 6$ & $m=2$ & $m=3$ & $m=4$ & $m=5$ & $m\geq 6$ & $m\geq2$ \\
\hline
$\kappa(e,a)$ & $1$ & $0$ & $0$ & $-\frac{1}{2}$ & $-1$ & $-\frac{3}{2}$ & $-2$ & $0$ & $-\frac{1}{2}$ & $-1$ & $-\frac{3}{2}$ & $-2$ & $-\infty$ & $-\infty$ & $-\infty$ & $-\infty$ & $-\infty$ & $-\infty$ & $-\infty$ & $-\infty$ & $-\infty$ & $-\infty$ & $-\infty$ \\
\hline
$\kappa(e,a^{-1})$ & $1$ & $0$ & $-\infty$ & $-\infty$ & $-\infty$ & $-\infty$ & $-\infty$ & $-\infty$ & $-\infty$ & $-\infty$ & $-\infty$ & $-\infty$ & $0$ & $-\frac{1}{2}$ & $-1$ & $-\frac{3}{2}$ & $-2$ & $0$ & $-\frac{1}{2}$ & $-1$ & $-\frac{3}{2}$ & $-2$ & $-\infty$ \\
\hline
$\kappa(e,b)$ & $-\infty$ & $-\infty$ & $0$ & $-\frac{1}{2}$ & $-1$ & $-1$ & $-1$ & $-\infty$ & $-\infty$ & $-\infty$ & $-\infty$ & $-\infty$ & $0$ & $-\frac{1;}{2}$ & $-1$ & $-1$ & $-1$ & $-\infty$ & $-\infty$ & $-\infty$ & $-\infty$ & $-\infty$ & $1$ \\
\hline
$\kappa(e,b^{-1})$ & $-\infty$ & $-\infty$ & $-\infty$ & $-\infty$ & $-\infty$ & $-\infty$ & $-\infty$ & $0$ & $-\frac{1}{2}$ & $-1$ & $-1$ & $-1$ & $-\infty$ & $-\infty$ & $-\infty$ & $-\infty$ & $-\infty$ & $0$ & $-\frac{1}{2}$ & $-1$ & $-1$ & $-1$ & $1$ \\
\hline
\end{tabular}%
}

\vspace{0.6cm}

\resizebox{\linewidth}{!}{%
\begin{tabular}{|l|c|c|c|c|c|c|c|c|c|c|c|c|}
\hline
\multicolumn{13}{|c|}{$\Gamma(Q_{4m}, -),\quad (|S|=3)$} \\
\hline
\multirow{2}{*}{$\kappa(x,y)$} & \multicolumn{3}{c|}{$\{a,a^{-1},b\}$} & \multicolumn{3}{c|}{$\{a,a^{-1},b^{-1}\}$} & \multicolumn{3}{c|}{$\{a,b,b^{-1}\}$} & \multicolumn{3}{c|}{$\{a^{-1},b,b^{-1}\}$} \\
\cline{2-13}
 & $m=2$ & $m=3$ & $m\geq 4$ & $m=2$ & $m=3$ & $m\geq 4$ & $m=2$ & $m=3$ & $m\geq 4$ & $m=2$ & $m=3$ & $m\geq 4$ \\
\hline
$\kappa(e,a)$ & $\frac{2}{3}$ & $\frac{1}{3}$ & $0$ & $\frac{2}{3}$ & $\frac{1}{3}$ & $0$ & $0$ & $-\frac{2}{3}$ & $-\frac{4}{3}$ & $-\infty$ & $-\infty$ & $-\infty$ \\
\hline
$\kappa(e,a^{-1})$ & $\frac{2}{3}$ & $\frac{1}{3}$ & $0$ & $\frac{2}{3}$ & $\frac{1}{3}$ & $0$ & $-\infty$ & $-\infty$ & $-\infty$ & $0$ & $-\frac{2}{3}$ & $-\frac{4}{3}$ \\
\hline
$\kappa(e,b)$ & $0$ & $0$ & $0$ & $-\infty$ & $-\infty$ & $-\infty$ & $\frac{2}{3}$ & $\frac{1}{3}$ & $0$ & $\frac{2}{3}$ & $\frac{1}{3}$ & $0$ \\
\hline
$\kappa(e,b^{-1})$ & $-\infty$ & $-\infty$ & $-\infty$ & $0$ & $0$ & $0$ & $\frac{2}{3}$ & $\frac{1}{3}$ & $0$ & $\frac{2}{3}$ & $\frac{1}{3}$ & $0$ \\
\hline
\end{tabular}%
}

\vspace{0.6cm}
\begin{center}
\begin{tabular}{|c|c|c|c|}
\hline
\multicolumn{4}{|c|}{$\Gamma(Q_{4m}, -),\quad (|S|=4)$} \\
\hline
\multirow{2}{*}{$\kappa(x,y)$} & \multicolumn{3}{c|}{$\{a,a^{-1},b,b^{-1}\}$} \\
\cline{2-4}
 & $m=2$ & $m=3$ & $m\geq 4$ \\
\hline
$\kappa(e,a)$ & $\frac{1}{2}$ & $\frac{1}{4}$ & $0$ \\
\hline
$\kappa(e,a^{-1})$ & $\frac{1}{2}$ & $\frac{1}{4}$ & $0$ \\
\hline
$\kappa(e,b)$ & $\frac{1}{2}$ & $\frac{1}{2}$ & $\frac{1}{2}$ \\
\hline
$\kappa(e,b^{-1})$ & $\frac{1}{2}$ & $\frac{1}{2}$ & $\frac{1}{2}$ \\
\hline
\end{tabular}
\end{center}
\end{table}
\end{landscape}



\begin{center}
\begin{tabular}{|c|c|c|c|}
\hline
\multicolumn{4}{|c|}{$\Gamma(D_n, -)\quad (|S|=1: \text{singletons})$} \\
\hline
\multirow{2}{*}{$\kappa(x,y)$} & \multicolumn{1}{c|}{$\{a\}$} & \multicolumn{1}{c|}{$\{a^{-1}\}$} & \multicolumn{1}{c|}{$\{b\}$} \\
\cline{2-4}
 & $n\geq 3$ & $n\geq 3$ & $n\geq 3$ \\
\hline
$\kappa(e,a)$ & $0$ & $-\infty$ & $-\infty$ \\
\hline
$\kappa(e,a^{-1})$ & $-\infty$ & $0$ & $-\infty$ \\
\hline
$\kappa(e,b)$ & $-\infty$ & $-\infty$ & $2$\\
\hline
\end{tabular}
\end{center}
\vspace{0.6cm}
\begin{center}
\begin{tabular}{|c|c|c|c|c|c|c|c|c|}
\hline
\multicolumn{9}{|c|}{$\Gamma(D_n, -)\quad (|S|=2: \text{pairs})$} \\
\hline
\multirow{2}{*}{$\kappa(x,y)$} & \multicolumn{4}{c|}{$\{a,a^{-1}\}$} & \multicolumn{2}{c|}{$\{a,b\}$} & \multicolumn{2}{c|}{$\{a^{-1},b\}$} \\
\cline{2-9}
 & $n=3$ & $n=4$ & $n=5$ & $n\geq 6$ & $n=3$ & $n\geq 4$ & $n=3$ & $n\geq 4$ \\
\hline
$\kappa(e,a)$ & $\frac{3}{2}$ & $1$ & $\frac{1}{2}$ & $0$ & $-\frac{1}{2}$ & $-1$ & $-\infty$ & $-\infty$ \\
\hline
$\kappa(e,a^{-1})$ & $\frac{3}{2}$ & $1$ & $\frac{1}{2}$ & $0$ & $-\infty$ & $-\infty$ & $-\frac{1}{2}$ & $-1$ \\
\hline
$\kappa(e,b)$ & $-\infty$ & $-\infty$ & $-\infty$ & $-\infty$ & $\frac{1}{2}$ & $0$ & $\frac{1}{2}$ & $0$ \\
\hline
\end{tabular}
\end{center}
\vspace{0.6cm}
\begin{center}
\begin{tabular}{|c|c|c|c|c|}
\hline
\multicolumn{5}{|c|}{$\Gamma(D_n, -)\quad (|S|=3: \text{triples})$} \\
\hline
\multirow{2}{*}{$\kappa(x,y)$} & \multicolumn{4}{c|}{$\{a,a^{-1},b\}$} \\
\cline{2-5}
 & $n=3$ & $n=4$ & $n=5$ & $n\geq 6$ \\
\hline
$\kappa(e,a)$ & $1$ & $\frac{2}{3}$ & $\frac{1}{3}$ & $0$ \\
\hline
$\kappa(e,a^{-1})$ & $1$ & $\frac{2}{3}$ & $\frac{1}{3}$ & $0$ \\
\hline
$\kappa(e,b)$ & $\frac{2}{3}$ & $\frac{2}{3}$ & $\frac{2}{3}$ & $\frac{2}{3}$ \\
\hline
\end{tabular}
\end{center}


\newpage

\begin{landscape}
\begin{table}[p] 
\centering
\label{tab:zn_curvature}
\vspace{0.2cm}

\resizebox{\linewidth}{!}{
\begin{tabular}{|c|c|c|c|c|c|c|c|c|c|c|c|c|c|c|}
\hline
\multicolumn{7}{|c|}{$\Gamma(\mathbb{Z}_n (k=2), -),\quad (|S|=1)$} \\
\hline
\multirow{2}{*}{$\kappa(x,y)$} & \multicolumn{1}{c|}{$\{+1\}$} & \multicolumn{1}{c|}{$\{-1\}$} & \multicolumn{2}{c|}{$\{+k\}$} & \multicolumn{2}{c|}{$\{-k\}$} \\
\cline{2-7}
 & $n\geq6$ & $n\geq6$ & $n\geq2,n \text{ even}$  & $n\geq3, n=2p+1 \text{, odd}$ & $n\geq 2, n \text{ even}$ & $n\geq 3, n=2p+1, n \text{, odd}$ \\
\hline
$\kappa(e,+1)$ & $0$ & $-\infty$ & $-\infty$ & $-p$ & $-\infty$ & $1-p$ \\
\hline
$\kappa(e,-1)$ & $-\infty$ & $0$ & $-\infty$ & $1-p$ & $-\infty$ & $-p$\\
\hline
$\kappa(e,+k)$ & $-1$ & $3-n$ & $0$ & $0$ & $-\infty$ & $-\infty$ \\
\hline
$\kappa(e,-k)$ & $3-n$ & $-1$ & $-\infty$ & $-\infty$ & $0$ & $0$ \\
\hline
\end{tabular}%
}
\vspace{0.6cm}

\resizebox{\linewidth}{!}{%
\begin{tabular}{|c|c|c|c|c|c|c|c|c|c|c|c|c|c|c|c|c|c|c|c|c|c|c|c|c|c|}
\hline
\multicolumn{18}{|c|}{$\Gamma(\mathbb{Z}_n (k=2), -),\quad (|S|=2)$} \\
\hline
\multirow{2}{*}{$\kappa(x,y)$} & \multicolumn{3}{c|}{$\{+1,-1\}$} & \multicolumn{1}{c|}{$\{+1,+k\}$} & \multicolumn{1}{c|}{$\{+1,-k\}$} & \multicolumn{1}{c|}{$\{-1,+k\}$} & \multicolumn{1}{c|}{$\{-1,-k\}$} & \multicolumn{10}{c|}{$\{+k,-k\}$} \\
\cline{2-18}
 & $n=6$ & $n=7$ & $n\geq 8$ & $n\geq6$ & $n\geq6$ & $n\geq 6$ & $n\geq6$ & $n=6$ & $n=7$ & $n=8$ & $n=9$ & $n=10$ & $n=11$ & $n=12$ & $n=13$ & $n=14$ & $n=15$ \\
\hline
$\kappa(e,+1)$ & $0$ & $0$ & $0$ & $\frac{1}{2}$ & $0$  & $-\infty$ & $-\infty$ & $-\infty$ & $-\frac{1}{2}$ & $-\infty$ & $-\frac{3}{2}$ & $-\infty$ & $-\frac{5}{2}$ & $-\infty$ & $-\frac{7}{2}$ & $-\infty$ & $-\frac{9}{2}$ \\
\hline
$\kappa(e,-1)$ & $0$ & $0$ & $0$ & $-\infty$ & $-\infty$ & $0$ & $\frac{1}{2}$ & $-\infty$ & $-\frac{1}{2}$ & $-\infty$ & $-\frac{3}{2}$ & $-\infty$ & $-\frac{5}{2}$ & $-\infty$ & $-\frac{7}{2}$ & $-\infty$ & $-\frac{9}{2}$ \\
\hline
$\kappa(e,+k)$ & $0$ & $-\frac{1}{2}$ & $-1$ & $0$ & $-\infty$ & $0$ & $-\infty$ & $\frac{3}{2}$ & $0$ & $1$ & $0$ & $\frac{1}{2}$ & $0$ & $0$ & $0$ & $0$ & $0$ \\
\hline
$\kappa(e,-k)$ & $0$ & $-\frac{1}{2}$ & $-1$ & $-\infty$ & $0$ & $-\infty$ & $0$ & $\frac{3}{2}$ & $0$ & $1$ & $0$ & $\frac{1}{2}$ & $0$ & $0$ & $0$ & $0$ & $0$ \\
\hline
\end{tabular}%
}

\vspace{0.6cm}

\resizebox{\linewidth}{!}{%
\begin{tabular}{|c|c|c|c|c|c|c|c|c|c|c|c|c|c|c|c|c|c|c|c|c|c|c|c|c|}
\hline
\multicolumn{25}{|c|}{$\Gamma(\mathbb{Z}_n (k=2), -),\quad (|S|=3)$} \\
\hline
\multirow{2}{*}{$\kappa(x,y)$} & \multicolumn{6}{c|}{$\{+1,-1,+k\}$} & \multicolumn{6}{c|}{$\{+1,-1,-k\}$} & \multicolumn{6}{c|}{$\{+1,+k,-k\}$} & \multicolumn{6}{c|}{$\{-1,+k,-k\}$} \\
\cline{2-25}
 & $n=6$ & $n=7$ & $n=8$ & $n=9$ & $n=10$ & $n\geq 11$ & $n=6$ & $n=7$ & $n=8$ & $n=9$ & $n=10$ & $n\geq 11$ & $n=6$ & $n=7$ & $n=8$ & $n=9$ & $n=10$ & $n\geq 11$ & $n=6$ & $n=7$ & $n=8$ & $n=9$ & $n=10$ & $n\geq 11$ \\
\hline
$\kappa(e,+1)$ & $\frac{2}{3}$ & $\frac{2}{3}$ & $\frac{2}{3}$ & $\frac{2}{3}$ & $\frac{2}{3}$ & $\frac{2}{3}$ & $1$ & $\frac{2}{3}$ & $\frac{2}{3}$ & $\frac{1}{3}$ & $\frac{1}{3}$ & $0$ & $\frac{1}{3}$ & $\frac{1}{3}$ & $\frac{1}{3}$ & $\frac{1}{3}$ & $\frac{1}{3}$ & $\frac{1}{3}$ & $-\infty$ & $-\infty$ & $-\infty$ & $-\infty$ & $-\infty$ & $-\infty$ \\
\hline
$\kappa(e,-1)$ & $1$ & $\frac{2}{3}$ & $\frac{2}{3}$ & $\frac{1}{3}$ & $\frac{1}{3}$ & $0$ & $\frac{2}{3}$ & $\frac{2}{3}$ & $\frac{2}{3}$ & $\frac{2}{3}$ & $\frac{2}{3}$ & $\frac{2}{3}$ & $-\infty$ & $-\infty$ & $-\infty$ & $-\infty$ & $-\infty$ & $-\infty$ & $\frac{1}{3}$ & $\frac{1}{3}$ & $\frac{1}{3}$ & $\frac{1}{3}$ & $\frac{1}{3}$ & $\frac{1}{3}$ \\
\hline
$\kappa(e,+k)$ & $\frac{1}{3}$ & $0$ & $0$ & $0$ & $0$ & $0$ & $-\infty$ & $-\infty$ & $-\infty$ & $-\infty$ & $-\infty$ & $-\infty$ & $1$ & $\frac{1}{3}$ & $\frac{2}{3}$ & $0$ & $\frac{1}{3}$ & $0$ & $1$ & $\frac{2}{3}$ & $\frac{2}{3}$ & $\frac{1}{3}$ & $\frac{1}{3}$ & $0$ \\
\hline
$\kappa(e,-k)$ & $-\infty$ & $-\infty$ & $-\infty$ & $-\infty$ & $-\infty$ & $-\infty$ & $\frac{1}{3}$ & $0$ & $0$ & $0$ & $0$ & $0$ & $1$ & $\frac{2}{3}$ & $\frac{2}{3}$ & $\frac{1}{3}$ & $\frac{1}{3}$ & $0$ & $1$ & $\frac{1}{3}$ & $\frac{2}{3}$ & $0$ & $\frac{1}{3}$ & $0$ \\
\hline
\end{tabular}%
}

\vspace{0.6cm}

\begin{center}
\resizebox{0.5\linewidth}{!}{%
\begin{tabular}{|c|c|c|c|c|c|c|}
\hline
\multicolumn{7}{|c|}{$\Gamma(\mathbb{Z}_n (k=2), -),\quad (|S|=4)$} \\
\hline
\multirow{2}{*}{$\kappa(x,y)$} & \multicolumn{6}{c|}{$\{+1,-1,+k,-k\}$} \\
\cline{2-7}
 & $n=6$ & $n=7$ & $n=8$ & $n=9$ & $n=10$ & $n\geq 11$ \\
\hline
$\kappa(e,+1)$ & $1$ & $1$ & $\frac{3}{4}$ & $\frac{3}{4}$ & $\frac{1}{2}$ & $\frac{1}{2}$ \\
\hline
$\kappa(e,-1)$ & $1$ & $1$ & $\frac{3}{4}$ & $\frac{3}{4}$ & $\frac{1}{2}$ & $\frac{1}{2}$ \\
\hline
$\kappa(e,+k)$ & $1$ & $\frac{3}{4}$ & $\frac{1}{2}$ & $\frac{1}{4}$ & $\frac{1}{4}$ & $0$ \\
\hline
$\kappa(e,-k)$ & $1$ & $\frac{3}{4}$ & $\frac{1}{2}$ & $\frac{1}{4}$ & $\frac{1}{4}$ & $0$ \\
\hline
\end{tabular}%
}
\end{center}

\end{table}
\end{landscape}
\newpage
\begin{center}
\begin{minipage}[t]{0.55\textwidth}
\centering
\begin{tabular}{|c|c|c|}
\hline
\multicolumn{3}{|c|}{$\Gamma(S_n, -),\quad (|S|=1)$} \\
\hline
\multirow{2}{*}{$\kappa(x,y)$} 
& \multicolumn{1}{c|}{$\{\sigma_{1}\}$} 
& \multicolumn{1}{c|}{$\{\sigma_{2}\}$} \\
\cline{2-3}
& $n\geq 3$ & $n\geq 3$ \\
\hline
$\kappa(e,\sigma_{1})$ & $2$ & $-\infty$ \\
\hline
$\kappa(e,\sigma_{2})$ & $-\infty$ & $2$ \\
\hline
\end{tabular}
\end{minipage}
\begin{minipage}[t]{0.38\textwidth}
\centering
\begin{tabular}{|c|c|}
\hline
\multicolumn{2}{|c|}{$\Gamma(S_n, -),\quad (|S|=2)$} \\
\hline
\multirow{2}{*}{$\kappa(x,y)$} 
& \multicolumn{1}{c|}{$\{\sigma_{1},\sigma_{2}\}$} \\
\cline{2-2}
& $n\geq 3$ \\
\hline
$\kappa(e,\sigma_{1})$ & $0$ \\
\hline
$\kappa(e,\sigma_{2})$ & $0$ \\
\hline
\end{tabular}
\end{minipage}
\end{center}

\vspace{0.6cm}


\begin{center}
\begin{tabular}{|c|c|c|c|c|}
\hline
\multicolumn{5}{|c|}{$\Gamma(A_n, -),\quad (|S|=1)$} \\
\hline
\multirow{2}{*}{$\kappa(x,y)$} & \multicolumn{1}{c|}{$\{V_{1}\}$} & \multicolumn{1}{c|}{$\{V_{1}^{-1}\}$} & \multicolumn{1}{c|}{$\{V_{2}\}$} & \multicolumn{1}{c|}{$\{V_{2}^{-1}\}$} \\
\cline{2-5}
 & $n\geq 4$ & $n\geq 4$ & $n\geq 4$ & $n\geq 4$ \\
\hline
$\kappa(e,V_{1})$ & $0$ & $-\infty$ & $-\infty$ & $-\infty$ \\
\hline
$\kappa(e,V_{1}^{-1})$ & $-\infty$ & $0$ & $-\infty$ & $-\infty$ \\
\hline
$\kappa(e,V_{2})$ & $-\infty$ & $-\infty$ & $0$ & $-\infty$ \\
\hline
$\kappa(e,V_{2}^{-1})$ & $-\infty$ & $-\infty$ & $-\infty$ & $0$ \\
\hline
\end{tabular}
\end{center}
\vspace{0.6cm}
\begin{center}
\begin{tabular}{|c|c|c|c|c|c|c|}
\hline
\multicolumn{7}{|c|}{$\Gamma(A_n, -),\quad (|S|=2)$} \\
\hline
\multirow{2}{*}{$\kappa(x,y)$} & \multicolumn{1}{c|}{$\{V_{1},V_{1}^{-1}\}$} & \multicolumn{1}{c|}{$\{V_{1},V_{2}\}$} & \multicolumn{1}{c|}{$\{V_{1},V_{2}^{-1}\}$} & \multicolumn{1}{c|}{$\{V_{1}^{-1},V_{2}\}$} & \multicolumn{1}{c|}{$\{V_{1}^{-1},V_{2}^{-1}\}$} & \multicolumn{1}{c|}{$\{V_{2},V_{2}^{-1}\}$} \\
\cline{2-7}
 & $n\geq 4$ & $n\geq 4$ & $n\geq 4$ & $n\geq 4$ & $n\geq 4$ & $n\geq 4$ \\
\hline
$\kappa(e,V_{1})$ & $\frac{3}{2}$ & $-\frac{1}{2}$ & $-\frac{3}{2}$ & $-\infty$ & $-\infty$ & $-\infty$ \\
\hline
$\kappa(e,V_{1}^{-1})$ & $\frac{3}{2}$ & $-\infty$ & $-\infty$ & $-\frac{3}{2}$ & $-\frac{1}{2}$ & $-\infty$ \\
\hline
$\kappa(e,V_{2})$ & $-\infty$ & $-\frac{1}{2}$ & $-\infty$ & $-\frac{3}{2}$ & $-\infty$ & $\frac{3}{2}$ \\
\hline
$\kappa(e,V_{2}^{-1})$ & $-\infty$ & $-\infty$ & $-\frac{3}{2}$ & $-\infty$ & $-\frac{1}{2}$ & $\frac{3}{2}$ \\
\hline
\end{tabular}
\end{center}
\vspace{0.6cm}
\begin{center}
\begin{tabular}{|c|c|c|c|c|}
\hline
\multicolumn{5}{|c|}{$\Gamma(A_n, -),\quad (|S|=3)$} \\
\hline
\multirow{2}{*}{$\kappa(x,y)$} & \multicolumn{1}{c|}{$\{V_{1},V_{1}^{-1},V_{2}\}$} & \multicolumn{1}{c|}{$\{V_{1},V_{1}^{-1},V_{2}^{-1}\}$} & \multicolumn{1}{c|}{$\{V_{1},V_{2},V_{2}^{-1}\}$} & \multicolumn{1}{c|}{$\{V_{1}^{-1},V_{2},V_{2}^{-1}\}$} \\
\cline{2-5}
 & $n\geq 4$ & $n\geq 4$ & $n\geq 4$ & $n\geq 4$ \\
\hline
$\kappa(e,V_{1})$ & $\frac{2}{3}$ & $\frac{1}{3}$ & $-\frac{2}{3}$ & $-\infty$ \\
\hline
$\kappa(e,V_{1}^{-1})$ & $\frac{1}{3}$ & $\frac{2}{3}$ & $-\infty$ & $-\frac{2}{3}$ \\
\hline
$\kappa(e,V_{2})$ & $-\frac{2}{3}$ & $-\infty$ & $\frac{2}{3}$ & $\frac{1}{3}$ \\
\hline
$\kappa(e,V_{2}^{-1})$ & $-\infty$ & $-\frac{2}{3}$ & $\frac{1}{3}$ & $\frac{2}{3}$ \\
\hline
\end{tabular}
\end{center}
\vspace{0.6cm}
\begin{center}
\begin{tabular}{|c|c|}
\hline
\multicolumn{2}{|c|}{$\Gamma(A_n, -),\quad (|S|=4)$} \\
\hline
\multirow{2}{*}{$\kappa(x,y)$} & \multicolumn{1}{c|}{$\{V_{1},V_{1}^{-1},V_{2},V_{2}^{-1}\}$} \\
\cline{2-2}
 & $n\geq 4$ \\
\hline
$\kappa(e,V_{1})$ & $\frac{1}{4}$ \\
\hline
$\kappa(e,V_{1}^{-1})$ & $\frac{1}{4}$ \\
\hline
$\kappa(e,V_{2})$ & $\frac{1}{4}$ \\
\hline
$\kappa(e,V_{2}^{-1})$ & $\frac{1}{4}$ \\
\hline
\end{tabular}
\end{center}

\vspace{1cm}




\end{document}